\def\jcdot{\scriptscriptstyle\bullet}
\def\invlim{\mathop{\vtop{\ialign{##\crcr$\hfill{\lim}\hfil$\crcr
\noalign{\kern1pt\nointerlineskip}\leftarrowfill\crcr\noalign
{\kern -3pt}}}}\limits}
\def\dirlim{\mathop{\vtop{\ialign{##\crcr$\hfill{\lim}\hfil$\crcr
\noalign{\kern1pt\nointerlineskip}\rightarrowfill\crcr\noalign
{\kern -3pt}}}}\limits}
\def\lomapr#1{\smash{\mathop{\relbar\joinrel\longrightarrow}\limits^{#1}}}
 \def\verylomapr#1{\smash{\mathop{\relbar\joinrel\relbar\joinrel\relbar\joinrel\longrightarrow}\limits^{#1}}}
\def\veryverylomapr#1{\smash{\mathop{\relbar\joinrel\relbar\joinrel\relbar
\joinrel\relbar\joinrel\relbar\joinrel\longrightarrow}\limits^{#1}}}
\def\phi{\varphi}
\def\epsilon{\varepsilon}
\let\mathcal\mathscr
\newtheorem{theorem}{Theorem}[section]
 \newtheorem{lemma}[theorem]{Lemma}
 \newtheorem{proposition}[theorem]{Proposition}
 \newtheorem{corollary}[theorem]{Corollary}
\theoremstyle{definition}
\newtheorem{definition}[theorem]{Definition}
\newtheorem{remark}[theorem]{Remark}
\newtheorem{construction}[theorem]{Construction}
\newtheorem{recall}[theorem]{Recall}
\newtheorem{example}[theorem]{Example}
\newtheorem*{acknowledgments}{Acknowledgments}
\newtheorem{num}[theorem]{}
\numberwithin{equation}{section}
\newtheorem*{thm*}{Theorem}
\newcommand{\Qp}{\mathbf{Q}_p}
\renewcommand{\phi}{\varphi}
\newcommand{\eff}{\operatorname{eff}}
\newcommand{\R}{\mathrm {R} }
\newcommand{\pst}{\operatorname{pst}}
\newcommand{\bq}{{\mathbf Q}}
\newcommand{\bz}{{\mathbf Z}}
\newcommand{\ovk}{\overline{K} }
\newcommand{\prim}{\operatorname{prim} }
\newcommand{\hk}{\operatorname{HK} }
\newcommand{\bk}{\operatorname{BK} }
\newcommand{\dr}{\operatorname{dR} }
\newcommand{\ad}{\operatorname{ad} }
\newcommand{\Ind}{\operatorname{Ind} }
 \newcommand{\op}{\operatorname{op} }
  \newcommand{\coker}{\operatorname{coker} }
 \newcommand{\coim}{\operatorname{coim} }
 \newcommand{\End}{\operatorname{{End}} } 
 \newcommand{\holim}{\operatorname{holim} }
 \newcommand{\hocolim}{\operatorname{hocolim} }
  \newcommand{\proeet}{\operatorname{pro\acute{e}t}  }
 \newcommand{\eet}{\operatorname{\acute{e}t} }
 \newcommand{\nr}{\operatorname{nr} }
 \newcommand{\Spec}{\operatorname{Spec} } 
 \newcommand{\Imm}{\operatorname{Im} } 
 \newcommand{\gm}{\operatorname{gm} }
 \newcommand{\Hom}{\operatorname{Hom} }
 \newcommand{\Ext}{\operatorname{Ext} }
 \newcommand{\Rep}{\operatorname{Rep} }
 \newcommand{\Gal}{\operatorname{Gal} }
 \newcommand{\can}{ \operatorname{can} }
 \newcommand{\id}{ \operatorname{Id} }
\newcommand{\synt}{ \operatorname{syn} }
 \newcommand{\Cone}{\operatorname{Cone} }
\newcommand{\st}{\operatorname{st} }
 \newcommand{\kr}{^{\scriptscriptstyle\bullet}}
 \newcommand{\sh}{{\mathcal{H}}}
 \newcommand{\scc}{{\mathcal{C}}}
 \newcommand{\so}{{\mathcal O}}
 \newcommand{\se}{{\mathcal{E}}}
 \newcommand{\sa}{{\mathcal{A}}}
 \newcommand{\srr}{{\mathcal{R}}}
\newcommand{\sd}{{\mathcal{D}}}
\newcommand{\sm}{{\mathcal{M}}}
 \newcommand{\wt}{\widetilde}
 \newcommand{\Z}{ {\mathbf Z} }
   \newcommand{\Q}{ {\mathbf Q}}
   \newcommand{\N}{{\mathbf N}}
 \newcommand{\B}{{\mathbf B}}
\DeclareMathOperator{\Sh}{Sh}
\DeclareMathOperator{\Sch}{Sch}
\DeclareMathOperator{\Sm}{Sm}
\newcommand{\Sma}{\operatorname{Sm}^{aff}}
\newcommand{\TT}{\mathcal T}
\renewcommand{\AA}{\mathbb A}
\newcommand{\PP}{\mathbb P}
\DeclareMathOperator{\uHom}{\underline{Hom} }
\newcommand{\synsp}{\mathcal E_{\synt}}
\newcommand{\synspx}[1]{\mathcal E_{\synt,#1}}
\newcommand{\usMod}{\synsp\!-\!\underline{\operatorname{\mathcal Mod}}}
\newcommand{\sMod}{\synsp\!-\!\operatorname{\mathcal Mod}}
\newcommand{\smod}{\synsp\!-\!\operatorname{mod}}
\newcommand{\smodx}[1]{#1\!-\!\operatorname{mod}}
\DeclareMathOperator{\derR}{R}
\DeclareMathOperator{\derL}{L}
\newcommand{\DMgm}{DM_{gm}}
\newcommand{\MM}{\operatorname{MM}} %Nori cohomological motives
\newcommand{\MMe}{\operatorname{EMM}} %effective Nori cohomological motives
\newcommand{\HMMe}{\operatorname{EHM}} %effective Nori homological motives
\newcommand{\Hm}{H_{\mathrm{mot}}} % Nori cohomological motive of a good pair
\newcommand{\D}{{\mathcal D}}
\newcommand{\T}{{\mathcal T}}
\newcommand{\un}{\mathbbm 1}
\numberwithin{equation}{section}
\begin{document}
 \title[On $p$-adic absolute Hodge cohomology and syntomic coefficients, I.]{On $p$-adic absolute Hodge cohomology and syntomic coefficients, I.}
 \author{Fr\'ed\'eric D\'eglise, Wies{\l}awa Nizio{\l}}
 \date{\today}
\thanks{The authors' research was supported in part by the ANR
 (grants ANR-12-BS01-0002 and ANR-14-CE25, respectively).}
 \email{frederic.deglise@ens-lyon.fr, wieslawa.niziol@ens-lyon.fr}
 \begin{abstract}
We interpret syntomic cohomology defined in \cite{NN} as a $p$-adic absolute Hodge cohomology. This is analogous to the interpretation of Deligne-Beilinson  cohomology as an absolute Hodge cohomology by Beilinson \cite{BE0} and generalizes the results of Bannai  \cite{Ban} and  Chiarellotto, Ciccioni, Mazzari \cite{CCM} in the good reduction case, and of Yamada \cite{Yam} in the semistable reduction case. This interpretation  yields a simple construction of the syntomic descent spectral sequence and its degeneration for proper and smooth varieties. We introduce syntomic coefficients and show that in dimension zero they form a full triangulated subcategory of the derived category of potentially semistable Galois representations.

  Along the way, we obtain $p$-adic realizations of mixed motives including $p$-adic comparison isomorphisms. We apply this to the motivic fundamental group generalizing results of Olsson and Vologodsky \cite{Ol}, \cite{Vo}.
 \end{abstract}
 \maketitle
 \tableofcontents
\section{Introduction}
In \cite{BE0}, Beilinson gave an interpretation of Deligne-Beilinson cohomology as an \emph{absolute Hodge cohomology}, i.e., as derived Hom  in the derived category of mixed Hodge structures. This approach is advantageous: absolute Hodge cohomology allows coefficients. It follows that Deligne-Beilinson cohomology can  be  interpreted as derived Hom between Tate twists in the derived category of Saito's mixed Hodge modules \cite[A.2.7]{HW}.

Syntomic cohomology is a  $p$-adic analog of Deligne-Beilinson  cohomology. The purpose of this paper is to give an analog of the above results for syntomic cohomology. Namely, we will show that
the syntomic cohomology introduced in \cite{NN} is a $p$-adic absolute Hodge cohomology, i.e., it can be expressed  as derived Hom   in the derived   category of $p$-adic Hodge structures, and we will begin the study of syntomic coefficients - an approximation of $p$-adic Hodge modules. This generalizes the results of Bannai  \cite{Ban} and  Chiarellotto, Ciccioni, Mazzari \cite{CCM} in the good reduction case, and of Yamada \cite{Yam} in the semistable reduction case.
 
Let  $K$ be a complete discrete valuation  field
  of mixed characteristic $(0,p)$ with perfect
residue field $k$.  Let  $G_K=\Gal(\overline {K}/K)$ be the Galois group of $K$. For the category of $p$-adic Hodge structures we take  the abelian category $DF_K $ of
 (weakly) admissible filtered $(\varphi,N,G_K)$-modules defined by Fontaine. For a variety $X$ over $K$, we construct a complex $\R\Gamma_{DF_K}(X_{\ovk},r)\in D^b(DF_K)$, $r\in \Z$. The absolute Hodge cohomology of $X$ is then by definition
 $$\R\Gamma_{\sh}(X,r):=\R\Hom_{D^b(DF_K)}(K(0),\R\Gamma_{DF_K}(X_{\ovk},r)),\quad r\in \Z. $$ For $r\geq 0$, it coincides with the syntomic cohomology $\R\Gamma_{\synt}(X,r)$ defined in  \cite{NN}. Recall that the latter was defined  as the following mapping fiber
$$\R\Gamma_{\synt}(X,r)=[\R\Gamma^B_{\hk}(X)^{\phi=p^r,N=0}\lomapr{\iota_{\dr}}\R\Gamma_{\dr}(X)/F^r],
$$
where $\R\Gamma^B_{\hk}(X)$ is the Beilinson-Hyodo-Kato cohomology from \cite{BE1}, $\R\Gamma_{\dr}(X)$ is the Deligne de Rham cohomology, and  the map $\iota_{\dr}$ is the Beilinson-Hyodo-Kato map.

   We present two approaches to the definition of  the complex $\R\Gamma_{DF_K}(X_{\ovk},r)$. In the first one,  we follow Beilinson's construction of the complex of mixed Hodge structures associated to a variety \cite{BE0}. Thus,
 we build the dg category $\sd_{pH}$ of \emph{$p$-adic Hodge complexes} (an analog of Beilinson's  mixed Hodge complexes)
 which is obtained
 by  gluing two dg categories, one, corresponding morally to
 the special fiber, whose objects are equipped with an action of a Frobenius
 and a monodromy operator, and the other one, corresponding
 to the generic fiber, whose objects are equipped with a filtration thought of as the Hodge
 filtration on de Rham cohomology. It contains a dg subcategory of \emph{admissible $p$-adic Hodge complexes} with cohomology groups belonging to $DF_K$. The category $\sd^{\ad}_{pH}$ admits a natural $t$-structure
 whose heart is the category $DF_K$ and   $\sd^{\ad}_{pH}$ is 
 equivalent to the derived category of its heart. That is, we have the following  equivalences of categories $$
\theta: DF_K \stackrel{\sim}{\rightarrow }\sd_{pH}^{\ad,\heartsuit},\quad \theta:  \sd^b(DF_K) \stackrel{\sim}{\rightarrow} \sd^{\ad}_{pH}.
$$ 
  The interest of the category $\sd^{\ad}_{pH}
$ lies in the fact that, for $r\in\Z$, a variety $X$ over $K$ gives rise to the admissible $p$-adic Hodge complex
$$
\R\Gamma_{pH}(X_{\ovk},r):=( \R\Gamma^B_{\hk}(X_{\ovk},r), (\R\Gamma_{\dr}(X),F^{\jcdot+r}), \iota_{\dr})\in\sd^{\ad}_{pH}
$$
 We define $\R\Gamma_{DF_K}(X_{\ovk},r):=\theta^{-1}\R\Gamma_{pH}(X_{\ovk},r)$.

   Since the category $DF_K$ is equivalent to that of potentially semistable representations \cite{CF}, i.e., we have a functor  $V_{\pst}: DF_K\stackrel{\sim}{\to}\Rep_{\pst}(G_K)$, we can also write
\begin{align*}
\R\Gamma_{\sh}(X,r) = \Hom_{\sd^b(\Rep_{\pst}(G_K))}(\Qp,\R\Gamma_{\pst}(X_{\ovk},r)),
  \end{align*}
 for $\R\Gamma_{\pst}(X_{\ovk},r):=V_{\pst}\R\Gamma_{DF_K}(X_{\ovk},r)$.
 Using Beilinson's comparison theorems \cite{BE1} we prove that $\R\Gamma_{\pst}(X_{\ovk},r)\simeq \R\Gamma_{\eet}(X_{\ovk},\Qp(r))$ as Galois modules. It follows that there is a  functorial syntomic descent  spectral sequence (constructed originally by a different, more complicated,  method in \cite{NN})
\begin{equation*}
{}^{\sh}E^{i,j}_2:=H^i_{\st}(G_K,H^j_{\eet}(X_{\ovk},\mathbf {Q}_p(r)))\Rightarrow H^{i+j}_{\sh}(X,r),
\end{equation*}
where $H^i_{\st}(G_K,\cdot):=\Ext^i_{\Rep_{\pst}(G_K)}(\Qp,\cdot)$. By a  classical  argument of Deligne \cite{De1}, it follows from Hard Lefschetz Theorem, that  it degenerates at $E_2$ for $X$ proper and smooth.
 
 \bigskip
  
  A more direct definition of the complex $\R\Gamma_{DF_K}(X_{\ovk},r)$, or, equivalently, of the complex $\R\Gamma_{\pst}(X_{\ovk},r)$  of potentially semistable representations  associated to a variety   was proposed by Beilinson \cite{BS} using Beilinson's Basic Lemma. This lemma allows one to associate a potentially semistable analog of a cellular complex (of a CW-complex) to an affine variety $X$ over $K$:  one stratifies the variety by closed subvarieties such that consecutive relative geometric \'etale cohomology is concentrated in the top degree (and is a potentially semistable representation). For a general $X$ one obtains Beilinson's potentially semistable complex  by a \v{C}ech gluing argument. 
  
 All the $p$-adic cohomologies mentioned above  (de Rham, \'etale,  Hyodo-Kato, and syntomic) behave well hence they lift to
  realizations of both Nori's abelian 
 and Voevodsky's triangulated category of mixed motives. 
 We also lift  the comparison maps between them, thus obtaining
 comparison theorems for mixed
 motives.
 We illustrate this construction by two applications.
 The first one is a $p$-adic realization of the motivic fundamental group including a potentially semistable comparison theorem. We rely on Cushman's motivic (in the sense of Nori) theory of the fundamental group \cite{Cu2}. This generalizes results obtained earlier for curves and proper varieties with good reduction \cite{Ha}, \cite{AIK}, \cite{Vo}, \cite{Ol}.
 The second is a compatibility result. We show that Beilinson's $p$-adic comparison theorems (with compact support or not) are compatible with 
 Gysin morphisms and (possibly mixed) products.

   To define a well-behaved notion of syntomic coefficients (i.e., coefficients for syntomic cohomology) we use Morel-Voevodsky \emph{motivic homotopy theory},
 and more precisely the concept of modules over (motivic) ring spectra. Recall 
 that objects of motivic stable homotopy theory, called spectra,
 represent cohomology theories with suitable properties.
 A multiplicative structure on the cohomology theory corresponds to
 a monoid structure on the representing spectrum, which is
 then called a \emph{ring spectrum}. These objects have to
 be thought of as a generalization of
 ($h$-sheaves of) differential graded algebras.
 In fact, as we will only consider ordinary cohomology theories
 (as opposed to K-theory or algebraic cobordism with
  integral coefficients), we will always restrict to this
        later concept.
 Therefore modules over ring spectra have to be understood
 as the more familiar concept of modules over
 differential graded algebras.  
        
   One of the basic examples
 of a representable cohomology theory is de Rham cohomology
 in characteristic $0$. Denote the corresponding motivic ring spectrum
 by $\mathcal E_{\dr}$. By \cite{CD3}, \cite{Drew}, 
working relatively to a fix
 complex variety $X$, modules over $\mathcal E_{\dr,X}$
 satisfying a suitable finiteness condition correspond naturally
 to (regular holonomic) $\mathcal D_X$-modules of geometric origin.

  In \cite{NN} it is shown that syntomic cohomology can be represented by a motivic  dg algebra  $\se_{\synt}$, i.e., we have
\begin{equation}
\label{form}
\R\Gamma_{\synt}(X,r)=\R\Hom_{DM_h(K,\Qp)}(M(X),\se_{\synt}(r)),
\end{equation}
where $M(X)$ is the Voevodsky's motive associated to $X$ and $DM_h(K,\Qp)$ is the category of $h$-motives.
 So we have the companion notion
 of \emph{syntomic modules}, that is, modules over the
 motivic dg-algebra $\synsp$.
 The main advantage of this definition is that the link with mixed
 motives is rightly given by the construction and, most of all,
 the 6 functors formalism follows easily from the motivic one.
 
     Now the crucial question is to understand how the category of
 syntomic modules is  related to the category  of  filtered
 $(\varphi,N,G_K)$-modules,  the existing candidates for syntomic smooth sheaves  \cite{Fa0}, \cite{Fa1}, \cite{Ts}, \cite{Sch}, and the category of syntomic coefficients  introduced in \cite{DegMaz} by a method analogous to the one we use  but based on 
 Gros-Besser's version of syntomic cohomology.
In this paper we study this question only  in dimension zero, i.e., for syntomic modules over the base field.
 With a suitable notion of finiteness for syntomic modules,
 called \emph{constructibility}, we prove the 
 following theorem.
\begin{thm*}[Theorem \ref{thm:compute_synt_modl}]
The triangulated monoidal category
 of constructible syntomic modules over 
 a $p$-adic field $K$ is equivalent to a full subcategory
 of the derived category of admissible filtered
 $(\varphi,N,G_K)$-modules.
\end{thm*}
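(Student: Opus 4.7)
The plan is to construct a fully faithful triangulated monoidal functor
$$
\Psi \colon \sd^{\synt}_c(K) \longrightarrow \sd^b(DF_K)
$$
from the constructible syntomic modules over $K$ to the bounded derived category of admissible filtered $(\varphi,N,G_K)$-modules; the essential image of $\Psi$ will then be the full subcategory asserted in the statement.

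The first step is to identify compact generators. Over a base which is a point, the constructible syntomic modules ought to be the thick triangulated subcategory of all $\synsp$-modules generated by the Tate twists $\synsp(n)$, $n\in\Z$, in analogy with the corresponding fact for constructible motives. The plan is then to match $\synsp(n)$ with the filtered $(\varphi,N,G_K)$-module $K(n) \in DF_K$. To promote this bijection on objects into an honest triangulated monoidal functor, one passes to symmetric monoidal dg (or stable $\infty$-categorical) enhancements on both sides: build a dg-category $\mathcal A$ whose objects are the $\synsp(n)$ and whose morphism complexes compute $\R\Hom_{\smod}(\synsp(n),\synsp(m))$, and symmetrically a dg-category $\mathcal B$ with the same object set and morphisms $\R\Hom_{\sd^b(DF_K)}(K(n),K(m))$. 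A symmetric monoidal quasi-equivalence $\mathcal A \xrightarrow{\sim} \mathcal B$ then yields $\Psi$ by standard Morita theory.

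The key computation is that the two sides of morphism complexes agree. Since $M(\Spec K) = \un$, the representability formula \eqref{form} gives
$$
\Hom^i_{\smod}(\synsp(n),\synsp(m)) \;\simeq\; H^i_{\synt}(K,m-n) \;=\; H^i_{\sh}(\Spec K, m-n).
$$
By the absolute Hodge cohomology interpretation established earlier in the paper, the right-hand side is $\Hom^i_{\sd^b(DF_K)}(K(0),\R\Gamma_{DF_K}(\Spec\ovk,m-n))$. For $X=\Spec K$ the complex $\R\Gamma_{DF_K}(\Spec\ovk,r)$ is concentrated in degree zero with value $K(r)$, and so altogether
$$
\Hom^i_{\smod}(\synsp(n),\synsp(m)) \;\simeq\; \Ext^i_{DF_K}(K(n),K(m)),
$$
which is the required quasi-isomorphism on morphism complexes between the generators. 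Full faithfulness on generators then extends to all of $\sd^{\synt}_c(K)$ by the standard two-variable devissage along triangles: the class of pairs for which $\Psi$ is an isomorphism on $\R\Hom$ is closed under triangles and retracts in each slot, and it contains all pairs of generators.

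The principal obstacle is the coherent lifting in the first step: promoting the object-level correspondence $\synsp(n)\mapsto K(n)$ into a genuine triangulated \emph{and} monoidal functor, rather than merely a match of $\Hom$-groups. This requires the symmetric monoidal dg enhancements to carry through all higher compatibilities --- associators, symmetries, and Tate twist isomorphisms --- which is where the ring-spectrum structure of $\synsp$ and the fact, developed earlier in the paper, that $\R\Gamma_{DF_K}$ can be upgraded to a symmetric monoidal functor from motives into $\sd^b(DF_K)$ do the serious work. Once the enhancement is in place, the $\Hom$ computation above and the devissage are essentially formal.
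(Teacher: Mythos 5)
Your first step contains a fatal gap that the rest of the argument cannot repair. The category $\smod_{c,K}$ is \emph{not} the thick subcategory generated by the Tate twists $\synsp(n)$: by definition it is generated by the free modules $\synspx K(X)(n)=R_{\synt}(M(X)(n))$ for \emph{all} smooth $K$-varieties $X$, and there is no ``corresponding fact for constructible motives'' to invoke --- $\DMgm(K,\Qp)$ is likewise not generated by the $\Qp(n)$ (that is only the subcategory of mixed Tate motives). A collapse onto Tate generators would require the K\"unneth formula for the represented cohomology theory, and \emph{absolute} syntomic cohomology does not satisfy K\"unneth, precisely because $\R\Hom_{D(\Rep_{\pst}(G_K))}(\Qp,-)$ is not monoidal. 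Concretely, under the equivalence the paper establishes, $\synspx K(X)(n)$ corresponds to a twist of (the dual of) $\R\Gamma_{\pst}(X_{\ovk},0)$, whose cohomology groups $H^i_{\eet}(X_{\ovk},\Qp)$ are in general irreducible non-Tate representations (take $X$ an elliptic curve); such objects cannot lie in the thick subcategory generated by the $K(n)[i]$. Your computation $\Hom^i_{\smod}(\synsp(n),\synsp(m))\simeq\Ext^i_{DF_K}(K(n),K(m))$ is correct, but it only gives full faithfulness on the mixed Tate part, and your closing devissage therefore never reaches the actual generators; the essential image you would produce is strictly smaller than the one described in Remark \ref{rem:concrete_syntomic_modl/K}.

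The missing idea is the one the paper takes from Drew's thesis: enrich the cohomology theory in the Tannakian category $\Rep_{\pst}(G_K)\simeq DF_K$. The geometric theory $X\mapsto\R\Gamma_{\pst}(X_{\ovk},0)$ \emph{does} satisfy K\"unneth (Lemma \ref{lm:kunneth_abs_synt}) and takes the value $\Qp$ on the point, so it is a mixed Weil theory with values in $\TT=\Ind-\Rep_{\pst}(G_K)$; Drew's tilting theorem then identifies modules over this enriched spectrum $\tilde{\mathcal E}_{syn}$ with $D(\TT)$. The syntomic spectrum is recovered as $\R\omega_*(\tilde{\mathcal E}_{syn})$, and Drew's descent result shows that the base-change functor $\derL\tilde\omega^*$ is an equivalence on constructible objects; composing yields the fully faithful $\rho^*:\smod_{c,K}\to D^b(\Rep_{\pst}(G_K))$. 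If you insist on a generators-and-relations route, the morphism complexes you must control are $\R\Hom_{\smod}(\synspx K(X)(n),\synspx K(Y)(m))$ for arbitrary smooth $X$, $Y$, and matching these with $\R\Hom$ in $D^b(DF_K)$ is exactly where the enriched K\"unneth formula is indispensable.
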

It implies, by adjunction from (\ref{form}), that $p$-adic absolute Hodge cohomology coincides with derived Hom in the  (homotopy) category of syntomic modules, i.e., we have
$$
\R\Gamma_{\sh}(X,r)=\R\Hom_{\smod_X}(\se_{\synt,X},\se_{\synt,X}(r)).
$$

In the conclusion of the paper, we use syntomic modules
 to introduce new notions of $p$-adic Galois representations (Definition \ref{df:motivic_rep}). We define
  \emph{geometric representations} which correspond
 to the common intuition of  representations
 associated to  (mixed) motives, and  \emph{constructible representations},
 corresponding to cohomology groups of  Galois realizations of syntomic modules.

We expect that the categories of geometric, constructible, and potentially
 semistable representations are not the same. This is at least what is predicted
 by the current general conjectures. Note that this is in contrast to
 the case of number fields where the analogs of
 these  notions are conjectured to coincide with the known definition
 of ``representations coming from geometry''  \cite{FonMaz}.

\subsubsection{Notation}Let  $\so_K$ be a complete discrete valuation ring with fraction field
$K$  of characteristic 0, with perfect
residue field $k$ of characteristic $p$. Let $\ovk$ be an algebraic closure of $K$. Let
$W(k)$ be the ring of Witt vectors of $k$ with
 fraction field $K_0$ and denote by $K_0^{\nr}$ the maximal unramified extension of $K_0$.  Set $G_K=\Gal(\overline {K}/K)$ and let $I_K$ denote its inertia subgroup.  Let $\phi$ be the absolute
Frobenius on $K_0^{\nr}$.
 We will denote by $\so_K$,
$\so_K^{\times}$, and $\so_K^0$ the scheme $\Spec (\so_K)$ with the trivial, canonical
(i.e., associated to the closed point), and $({\mathbf N}\to \so_K, 1\mapsto 0)$
log-structure respectively. For a scheme $X$ over $W(k)$, $X_n$ will denote its reduction mod $p^n$, $X_0$ will denote its special fiber.

 Let ${\mathcal Var}_K$ denote the category of varieties over $K$. For a dg category $\scc$ with a $t$-structure, 
 we will denote by $\scc^{\heartsuit}$ the heart of the $t$-structure. 
%All the dg categories we work with are strongly pretriangulated and over %$\Qp$ ??. That means that the dg nerve construction carries them to stable %$\infty$ categories
%\cite[Chapter 1, 1.3.1]{HA}, \cite{GF} identifying their homotopy categories %as triangulated categories. 
\begin{acknowledgments} We would like to thank 
Alexander Beilinson, Laurent Berger, Bhargav Bhatt, Fran\c{c}ois Brunault, Denis-Charles Cisinski, Pierre Colmez, Gabriel Dospinescu, Bradley Drew, Veronika Ertl, Tony Scholl, and Peter Scholze for helpful discussions related to the subject of this paper. We thank Madhav Nori for sending us the thesis of Matthew Cushman. Special thanks go to Alexander  Beilinson for  explaining to us his construction of syntomic cohomology, for allowing  us to include it in this paper, and for sending us Nori's notes on Nori's motives. 
\end{acknowledgments}

\section{A $p$-adic absolute Hodge cohomology, I}
\subsection{The derived category of admissible filtered $(\phi, N, G_K)$-modules}
%   Unless otherwise stated, we work in the category of integral quasi-coherent % log-schemes.

\begin{num}
    For a field $K$, let $V_{K}$ denote the category of $K$-vector spaces. It is an abelian category. We will denote by $\sd^b(V_K)$ its bounded derived dg category and by $D^b(V_K)$ -- its bounded derived category. 
   Let $V^K_{\dr}$ denote  the category of $K$-vector spaces with a descending exhaustive separated filtration $F^{\scriptscriptstyle\bullet}$.
The category $V^K_{\dr}$ (and the category of bounded complexes $C^b(V^K_{\dr})$) is additive but not abelian. It is an exact category in the sense of Quillen \cite{Qu}, where  short exact sequences are exact sequences of $K$-vector spaces with  {\em strict morphisms}
 (recall that a morphism $f: M\to N$ is {\em strict} if $f(F^iM)=F^iN\cap \Imm(f)$). It is also a quasi-abelian category in the sense of \cite{Sn} (see \cite[2]{SS} for a quick review). Thus its derived category
 can be studied as usual (see \cite{BBD}). 

   An object $M\in C^b(V^K_{\dr})$ is called  a {\em strict complex} if its differentials are strict.
   There are canonical truncation functors on $C^b(V^K_{\dr})$: 
\begin{align*}
\tau_{\leq n}M & :=\cdots \to M^{n-2}\to M^{n-1}\to 
\ker(d^n)\to 0\to\cdots\\
 \tau_{\geq n} M & :=\cdots \to 0\cdots \to
\coim(d^{n-1}) \to M^n\to M^{n+1}\to\cdots
\end{align*}
The cohomology object $$
\tau_{\leq n}\tau_{\geq n}(M)=\cdots \to 0 \to \coim(d^{n-1})\to \ker(d^n)\to 0\to\cdots
$$
     We will denote the bounded derived dg category of $V^K_{\dr}$ by $\sd^b(V^K_{\dr})$. It is defined as the dg quotient \cite{Dr} of the dg category  $C^b(V^K_{\dr})$ by the full dg 
     subcategory of strictly exact complexes \cite{N}.  A map of complexes is a quasi-isomorphism if and only if it is a quasi-isomorphism on the grading. 
     The homotopy category of $\sd^b(V^K_{\dr})$ is the bounded filtered derived category $D^b(V^K_{\dr})$. 
  
 For $n\in \Z$, let $D^b_{\leq n}(V^K_{\dr})$ (resp. $D^b_{\geq n}(V^K_{\dr})$) denote the full subcategory of $D^b(V^K_{\dr})$  of complexes that are strictly exact in degrees $k >n$ (resp. $k<n$). The above truncation maps extend to truncations functors
  $\tau_{\leq n}: D^b(V^K_{\dr})\to D^b_{\leq n}(V^K_{\dr})$ and $\tau_{\geq n}: D^b(V^K_{\dr})\to D^b_{\geq n}(V^K_{\dr})$. The pair $(D^b_{\leq n}(V^K_{\dr}),D^b_{\geq n}(V^K_{\dr})$) defines a t-structure on $D^b(V^K_{\dr})$ by \cite{Sn}. The heart $D^b_{\leq 0}(V^K_{\dr})^{\heartsuit}$  is an abelian category $LH(V^K_{\dr})$. We have an embedding
$V^K_{\dr}\hookrightarrow LH(V^K_{\dr})$ 
that induces an equivalence $D^b(V^K_{\dr})\stackrel{\sim}{\to} D^b(LH(V^K_{\dr}))$. This t-structure pulls back to a t-structure on the derived dg category $\sd^b(V^K_{\dr})$.
\end{num}
\begin{num}
 
       Let the field $K$ be again as at the beginning of this article. A $\phi$-module over $K_0$ is a pair $(D,\phi)$, where $D$ is a  $K_0$-vector space and the Frobenius
$\phi=\phi_D$ is a $\phi$-semilinear endomorphism of $D$. We will usually write $D$ for $(D,\phi)$.  The category $M_{K_0}(\phi)$ of $\phi$-modules over $K_0$ is abelian and we will denote by $\sd^b_{K_0}(\phi)$ its bounded derived dg category.

   For $D_1,D_2\in M_{K_0}(\phi)$, let $\Hom_{K_0,\phi}(D_1,D_2)$ denote the group of Frobenius morphisms. We have the exact sequence
\begin{align}
\label{exact}
0\to \Hom_{K_0,\phi}(D_1,D_2)\to \Hom_{K_0}(D_1,D_2)\to  \Hom_{K_0}(D_1,\phi_*D_2), \end{align}
where the last map is $\delta: x\mapsto \phi_{D_2}x-\phi_*(x)\phi_{D_1}$. Set $\Hom^{\sharp}_{K_0,\phi}(D_1,D_2):=\Cone(\delta)[-1]$. Beilinson proves the following lemma.
\begin{lemma}(\cite[1.13,1.14]{BE1}) 
\item For $D_1,D_2\in \sd^b_{K_0}(\phi)$, the map $R\Hom_{K_0,\phi}(D_1,D_2)\to \Hom^{\sharp}_{K_0,\phi}(D_1,D_2)$ is a quasi-isomorphism, i.e., 
$$
R\Hom_{K_0,\phi}(D_1,D_2)=\Cone(\Hom_{K_0}(D_1,D_2)\stackrel{\delta}{\to} \Hom_{K_0}(D_1,\phi_*D_2))[-1]
$$
\end{lemma}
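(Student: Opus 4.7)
The plan is to exhibit a length-one free resolution of $D_1$ as a $\phi$-module and compute $R\Hom$ against it. First, since both sides of the claimed quasi-isomorphism are bifunctors sending short exact sequences in either argument to distinguished triangles (for the right-hand side, because $\Hom_{K_0}(-,V)$ is exact in its first variable for any $K_0$-vector space $V$), a standard d\'evissage reduces us to the case $D_1,D_2\in M_{K_0}(\phi)$ concentrated in cohomological degree zero.

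Next, I would view $M_{K_0}(\phi)$ as the category of left modules over the twisted polynomial ring $K_0\{\phi\}$ (with defining relation $\phi a=\phi(a)\phi$) and, for every $\phi$-module $D$, exhibit the short exact sequence
\[
0 \to F(\phi^{-1}_*D) \xrightarrow{\alpha} F(D) \to D \to 0,
\]
where $F(V):=K_0\{\phi\}\otimes_{K_0}V$ is the free $\phi$-module on a $K_0$-vector space $V$, the right map is the counit $x\otimes d\mapsto x\cdot d$, and $\alpha(x\otimes d):=x\phi\otimes d - x\otimes\phi_D(d)$. The Frobenius twist on the source, $\phi^{-1}_*D$ (with action $a\cdot d=\phi^{-1}(a)d$), is precisely what makes $\alpha$ well-defined with respect to $K_0$-balancing, absorbing the relation $\phi a=\phi(a)\phi$.

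Third, applying $\Hom_\phi(-,D_2)$ to this resolution and using the free/forget adjunction $\Hom_\phi(F(V),D_2)=\Hom_{K_0}(V,D_2)$ yields a two-term complex. Both $\Hom_{K_0}(\phi^{-1}_*D_1,D_2)$ and $\Hom_{K_0}(D_1,\phi_*D_2)$ are naturally identified with the group of $\phi$-semilinear maps $D_1\to D_2$, and unwinding the adjunction matches the induced differential with $\delta(x)=\phi_{D_2}x - x\phi_{D_1}$, giving the desired quasi-isomorphism.

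The main obstacle is verifying the short exact sequence in step two, that is, the injectivity of $\alpha$ and the equality of its image with $\ker(F(D)\to D)$; this is a Koszul-type relation expressing that $D$ is generated over $K_0\{\phi\}$ by itself modulo the single family of identifications $\phi\otimes d\equiv 1\otimes\phi_D(d)$. A cleaner alternative bypasses the explicit resolution by invoking the classical fact that $K_0\{\phi\}$ is a noncommutative principal ideal domain (an Ore extension of a field), hence hereditary, so that $\Ext^i_\phi=0$ for $i\geq 2$ automatically; the remaining identification $\Ext^1_\phi(D_1,D_2)\cong\coker(\delta)$ then comes from the standard Yoneda description of extensions --- each $0\to D_2\to E\to D_1\to 0$, after choosing a $K_0$-splitting, is classified by the off-diagonal entry of its Frobenius, a $\phi$-semilinear map $D_1\to D_2$ well-defined modulo the image of $\delta$.
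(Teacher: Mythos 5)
Your proof is correct, but it runs in the opposite direction from the paper's. The paper resolves the \emph{second} variable: it uses the right adjoint $M\mapsto M_{\phi}=\prod_{n\geq 0}\phi^n_*M$ of the forgetful functor $M_{K_0}(\phi)\to V_{K_0}$ to embed every $\phi$-module into an injective coinduced module $G_{\phi}$, reduces to $D_2=G_{\phi}$, and checks the quasi-isomorphism there by hand. You instead resolve the \emph{first} variable by the two-term free resolution $0\to F(\phi^{-1}_*D_1)\to F(D_1)\to D_1\to 0$ over the twisted polynomial ring $K_0\{\phi\}$ (the left adjoint of the same forgetful functor), and then apply the free/forget adjunction; your identification of $\Hom_{K_0}(\phi^{-1}_*D_1,D_2)$ with $\Hom_{K_0}(D_1,\phi_*D_2)$ as the group of $\phi$-semilinear maps, and of the induced differential with $\delta$, is accurate (note that injectivity of $\alpha$ and the well-definedness of the twist both use that $\phi$ is bijective on $K_0$, which holds since $k$ is perfect). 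The two arguments are formally dual and equally valid. The paper's injective-side argument has the advantage that it transports verbatim to the $(\phi,N)$-case of Lemma 2.3, where the coinduced module $M_{\phi,N}$ again does all the work; your projective-side argument would there require the three-term Koszul-type resolution coming from the two generators $\phi,N$ of the relevant Ore extension, which is doable but heavier. What your route buys is a cleaner conceptual explanation of \emph{why} the answer is a two-term complex --- $K_0\{\phi\}$ is a (noncommutative) principal ideal domain, hence hereditary, so $\Ext^{\geq 2}_{K_0,\phi}=0$ --- together with the concrete Yoneda description of $\Ext^1$ via the off-diagonal entry of the Frobenius of an extension. One small caveat on your ``cleaner alternative'': computing $\Ext^0$ and $\Ext^1$ separately identifies the cohomology groups but does not by itself produce the map $R\Hom_{K_0,\phi}(D_1,D_2)\to\Hom^{\sharp}_{K_0,\phi}(D_1,D_2)$ asserted in the statement; for that you still need either the natural transformation coming from the exact sequence (2.2) (as in the paper) or your explicit functorial free resolution, which also disposes of the reduction to complexes concentrated in a single degree without a separate d\'evissage.
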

\begin{proof} Note that, 
for $D_1,D_2\in \sd^b_{K_0}(\phi)$, from the exact sequence (\ref{exact}), we get a map
$$
\alpha: R\Hom_{K_0,\phi}(D_1,D_2)\to\Cone(R\Hom_{K_0}(D_1,D_2)\stackrel{\delta}{\to}R\Hom_{K_0}(D_1,R\phi_*D_2))[-1]
$$ We will show that it is a quasi-isomorphism.

     The forgetful functor $M_{K_0}(\phi)\to V_{K_0}$ has a right adjoint $M\to M_{\phi}$, where the $\phi$-module $M_{\phi}:=\prod_{n\geq 0}\phi^n_*M$ with Frobenius $\phi_{M_{\phi}}: (x_0,x_1,\ldots,)\to (x_1,x_2,\ldots)$. The functor $M\to M_{\phi}$ is left exact and preserves injectives. Since all $K_0$-modules are injective, the map $M\to M_{\phi}$, $m\mapsto (m,\phi(m),\phi^2(m),\ldots)$, embeds $M$ into an injective $\phi$-module.  It suffices thus to check that the map $\alpha$ is a quasi-isomorphism for $D_1$ any $\phi$-module and $D_2=G_{\phi}$. We calculate
\begin{align*}
R\Hom_{K_0,\phi}(D_1,G_{\phi}) & \stackrel{\sim}{\leftarrow}\Hom_{K_0,\phi}(D_1,G_{\phi})
\stackrel{\sim}{\to}
\Cone(\Hom_{K_0}(D_1,G_{\phi})\stackrel{\delta}{\to}\Hom_{K_0}(D_1,\phi_*G_{\phi}))[-1]\\
& \stackrel{\sim}{\to}\Cone(R\Hom_{K_0}(D_1,G_{\phi})\stackrel{\delta}{\to}R\Hom_{K_0}(D_1,R\phi_*G_{\phi}))[-1]
\end{align*}
This proves the lemma.
\end{proof}
\end{num}
\begin{num}
   A $(\phi,N)$-module is a triple $(D,\phi_D,N)$ (abbreviated often to $D$), where $(D,\phi_D)$ is a finite rank $\phi$-module over $K_0$ and $\phi_D$ is an automorphism, and $N$ is a $K_0$-linear endomorphism of $D$ such that $N\phi_D=p\phi_D N$ (hence $N$ is nilpotent). The category $M_{K_0}(\phi,N)$  of $(\phi,N)$-modules is naturally a Tannakian tensor ${\mathbf Q}_p$-category and $(M,\phi_M,N)\mapsto M$ is a fiber functor over $K_0$. Denote by $\sd^b_{\phi,N}(K_0)$ and $D^b_{\phi,N}(K_0)$ the
   corresponding bounded derived dg category and bounded derived  category, respectively.

   For $(\phi, N)$-modules $M,T$, let $Hom_{\phi,N}(M,T)$ be the group of $(\phi,N)$-module morphisms.  Let $Hom^{\sharp}(M,T)$ be the complex \cite[1.15]{BE1}
  $$Hom_{K_0}(M,T)  \to Hom_{K_0}(M,\phi_*T)\oplus Hom_{K_0}(M,T)\to Hom(M,\phi_*T)$$
  beginning in degree $0$ and with the following differentials 
  \begin{align*}
  d_0:\quad  & x\mapsto (\phi_2 x - x\phi_1, N_2x - x N_1);\\
  d_1:\quad & (x,y)\mapsto (N_2x-px N_1-p\phi_2y+y\phi_1)
  \end{align*}
  Clearly, we have $\Hom_{\phi,N}(M,T)=H^0\Hom^{\sharp}_{\phi,N}(M,T)$.  Complexes $\Hom^{\sharp}_{\phi,N}$ compose naturally and supply 
  a dg category structure on the category of bounded complexes of $(\phi, N)$-modules.

    Beilinson states the following fact.
 \begin{lemma}(\cite[1.15]{BE1})
 \label{diagram1}
For $D_1,D_2\in \sd^b_{K_0}(\phi,N)$, the map $R\Hom_{\phi,N}(D_1,D_2)\to \Hom^{\sharp}_{\phi,N}(D_1,D_2)$ is a quasi-isomorphism, i.e., 
$$
R\Hom_{\phi,N}(D_1,D_2)= \left[\begin{aligned}
\xymatrix{\Hom_{K_0}(D_1,D_2)\ar[r]^{\delta_1}\ar[d]^{\delta_2} &  \Hom_{K_0}(D_1,\phi_*D_2))\ar[d]^{\delta_2^{\prime}}\\
\Hom_{K_0}(D_1,D_2)\ar[r] ^{\delta_1^{\prime}}& \Hom_{K_0}(D_1,\phi_*D_2))
}
\end{aligned}
\right]
$$
Here
\begin{align*}
\delta_1: x\mapsto \phi_2x-x\phi_1,\quad \delta^{\prime}_1: x\mapsto p\phi_2 x - x \phi_1;\\
\delta_2: x\mapsto N_2x-xN_1,\quad \delta^{\prime}_2: x\mapsto N_2 x - px N_1
\end{align*}
The outside bracket denotes the homotopy limit (the total complex of the double complex in this case).
  \end{lemma}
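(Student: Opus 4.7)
The plan is to deduce the lemma from the preceding one by presenting $R\Hom_{\phi,N}$ as the fiber of a map between two $\phi$-hom complexes. The key observation is that specifying a monodromy operator $N$ on a $\phi$-module $D$ is the same as giving a $\phi$-linear map $N\colon D\to D(-1)$, where the Tate twist $D(-1)$ denotes the same underlying $K_0$-vector space with Frobenius rescaled to $p\phi_D$; the required relation $N\phi=p\phi N$ is precisely the $\phi$-equivariance of $N$ into the twist, and in finite rank this forces nilpotency of $N$ automatically. A morphism of $(\phi,N)$-modules is then a $\phi$-morphism $f\colon D_1\to D_2$ with $N_2 f=f N_1$. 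This presentation yields, at the derived level, a distinguished triangle
$$R\Hom_{\phi,N}(D_1,D_2)\longrightarrow R\Hom_{K_0,\phi}(D_1,D_2)\stackrel{\delta_2}{\longrightarrow} R\Hom_{K_0,\phi}(D_1,D_2(-1)),$$
with $\delta_2(f):=N_2\circ f-f\circ N_1$.

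Applying the previous lemma to each of the two right-hand terms computes them as shifted cones:
$$R\Hom_{K_0,\phi}(D_1,D_2)=\Cone\bigl(\Hom_{K_0}(D_1,D_2)\stackrel{\delta_1}{\longrightarrow}\Hom_{K_0}(D_1,\phi_*D_2)\bigr)[-1],$$
$$R\Hom_{K_0,\phi}(D_1,D_2(-1))=\Cone\bigl(\Hom_{K_0}(D_1,D_2)\stackrel{\delta'_1}{\longrightarrow}\Hom_{K_0}(D_1,\phi_*D_2)\bigr)[-1],$$
where $\delta_1(x)=\phi_2 x-x\phi_1$ and $\delta'_1(x)=p\phi_2 x-x\phi_1$, the latter because Frobenius on $D_2(-1)$ is $p\phi_2$. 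The map $\delta_2$ lifts at the chain level to the pair $(\delta_2,\delta'_2)$ of the displayed bicomplex, with $\delta'_2(x)=N_2 x-px N_1$ being the twisted form appropriate for morphisms landing in the Tate twist. Taking the fiber of the induced chain map between the two rows is exactly the totalization of the displayed bicomplex, which yields the asserted description of $R\Hom_{\phi,N}(D_1,D_2)$.

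The one concrete compatibility to verify is that $(\delta_2,\delta'_2)$ is indeed a chain map between the two rows, that is, $\delta'_2\delta_1=\delta'_1\delta_2$; this is a one-line computation using $N_i\phi_i=p\phi_i N_i$. The principal obstacle is justifying the initial fiber sequence rigorously: one must present $M_{K_0}(\phi,N)$ as the abelian category of $\phi$-modules equipped with a $\phi$-map into the Tate twist, and derive the fiber sequence from a Koszul-type resolution of $D_1$ over the twisted polynomial algebra $K_0\{\phi,\phi^{-1},N\}/(N\phi-p\phi N)$ (an Ore extension of the skew algebra for $\phi$ by $N$). Once this is set up, no further difficulty arises beyond the content of the previous lemma applied row by row.
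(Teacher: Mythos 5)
Your proof is correct, but it follows a genuinely different route from the paper's. The paper works directly in the (enlarged) category of $(\phi,N)$-modules: it observes that the forgetful functor to $V_{K_0}$ has an exact, injective-preserving right adjoint $M\mapsto M_{\phi,N}=\prod_{i,j}M_{ij}$, writes down an explicit four-term injective resolution $0\to M\to M_{\phi,N}\to \phi_*M_{\phi,N}\oplus M_{\phi,N}\to \phi_*M\to 0$, and reads off $\Hom^{\sharp}_{\phi,N}$ by applying $\Hom_{\phi,N}(T,-)$ and passing to $K_0$-linear maps. You instead factor the problem through the previous lemma: encoding $N$ as a $\phi$-equivariant map $D\to D(-1)$ into the Frobenius twist, the two-term Koszul resolution of $D_1$ over the Ore extension $B[N;\sigma]$ of the skew Laurent algebra $B=K_0\{\phi,\phi^{-1}\}$ yields the fiber sequence $R\Hom_{\phi,N}\to R\Hom_{K_0,\phi}(D_1,D_2)\xrightarrow{\delta_2}R\Hom_{K_0,\phi}(D_1,D_2(-1))$, and the previous lemma computes each term as a two-term cone; your verification that $\delta_2'\delta_1=\delta_1'\delta_2$ via $N_i\phi_i=p\phi_i N_i$ is exactly the needed compatibility, and the twist correctly accounts for $\delta_1'=p\phi_2(-)-(-)\phi_1$ and $\delta_2'=N_2(-)-p(-)N_1$. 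Your approach buys reusability (the $N$-direction is handled formally, and the same dévissage gives the $(\phi,N,G_K)$-case), while the paper's buys an explicit resolution producing the three-term complex $\Hom^{\sharp}$ in one step. Note that both arguments share the same glossed-over reduction: the lemma concerns the derived category of finite-rank modules with bijective $\phi$ and nilpotent $N$, whereas the resolutions (the paper's coinduced injectives, your Koszul resolution over the Ore algebra) live in the larger category of all modules, so one must know that derived Homs are unchanged under this enlargement — the paper dispatches this in one sentence, and you should at least flag it.
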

  \begin{proof}
  We pass first to the  category of  $(\phi,N)$-modules defined as above but with modules of any rank and with  the Frobenius being just an endomorphism. It suffices to prove our lemma in this (larger) category.  The forgetful functor $M_{K_0}(\phi,N)\to V_{K_0}$ has a right adjoint $M\mapsto M_{\phi,N}$, where $M_{\phi,N}$ is the product $\prod _{i\geq 0}M^i_{\phi}, $ $M^i_{\phi}=M_{\phi}$. We will often write $M_{\phi,N}=\prod_{i,j\geq 0}M_{ij}$. The $K_0$-action on $M_{\phi,N}$ is twisted by Frobenius: $am_{ij}=\phi^i(a)m_{ij}$, $a\in K_0$; the Frobenius is defined by $\phi=\id: M_{ij}\to \phi_*M_{i,j-1}$ and the monodromy -- as $N=p^{-j}: M_{ij}\to M_{i-1,j}$.  The functor $M\mapsto M_{\phi,N}$ is exact and preserves injectives. We have the adjunction map $M\to M_{\phi,N}$, $m\mapsto (m_{ij}=N^i\phi^j(m))$ that induces the isomorphism $\Hom_{K_0}(T,M)\stackrel{\sim}{\to}\Hom_{\phi,N}(T,M_{\phi,N})$, $T\in V_{K_0}, M\in M_{K_0}(\phi,N)$.
  
     Consider the following injective resolution of
  a $(\phi, N)$-module $M$
  $$ 0\to M\stackrel{}{\to}M_{\phi,N}\stackrel{\alpha}{\to}\phi_*M_{\phi,N}\oplus M_{\phi,N}\stackrel{\beta}{\to}\phi_*M\to 0
  $$
  with 
  \begin{align*}
  \alpha(m_{ij}) & =(\phi(m_{ij})-m_{i,j+1},Nm_{ij}-p^{-j}m_{i+1,j}), \\
  \beta(x,y) & =((i,j)\mapsto Nx_{ij}-p^{1-j}x_{i+1,j}-p\phi y_{i,j}+y_{i,j+1})
  \end{align*}
   Hence 
  $R\Hom_{\phi,N}(T,M)$ for $M,T\in M_{K_0}(\phi,N)$,  can be computed by the complex
  $$
  \Hom_{\phi,N}(T,M_{\phi,N})\stackrel{\alpha}{\to}\Hom_{\phi,N}(T,\phi_*M_{\phi,N})\oplus\Hom_{\phi,N}(T,M_{\phi,N})\stackrel{\beta}{\to}\Hom_{\phi,N}(T,\phi*M_{\phi,N})
  $$
 Passing to the $K_0$-linear morphisms we get the complex $\Hom^{\sharp}(T,M)$. This suffices to prove  the lemma. 
  \end{proof}
\end{num}
\begin{num}

    A filtered $(\phi,N)$-module  is a tuple $(D_0,\phi,N,F^{\scriptscriptstyle\bullet})$, where $(D_0,\phi,N)$ is a $(\phi, N)$-module and
    $F^{\scriptscriptstyle\bullet} $  is a decreasing finite filtration of $D_K:=D_0\otimes_{K_0}K$ by $K$-vector
spaces. There is a notion of a {\em (weakly) admissible} filtered $(\phi, N)$-module \cite{CF}. Denote by
$$MF^{\ad}_K(\phi,N)\subset MF_K(\phi,N)\subset M_{K_0}(\phi,N)$$ the categories of admissible filtered $(\phi, N)$-modules, 
filtered $(\phi, N)$-modules, and $(\phi, N)$-modules, respectively. We know \cite{CF} that the
pair of functors 
\begin{align*}
D_{\st}(V) & =(\B_{\st}\otimes_{{\mathbf Q}_p}V)^{G_K},\quad D_{K}(V)=(\B_{\dr}\otimes_{{\mathbf Q}_p}V)^{G_K};\\
 V_{\st}(D) & =(\B_{\st}\otimes_{K_0}D_0)^{\phi=\id,N=0}\cap F^0(\B_{\dr}\otimes_{K}D_K)
 \end{align*}
defines an equivalence of categories $MF_K^{\ad}(\phi,N)\simeq \Rep_{\st}(G_K)\subset\Rep(G_K)$, where the last two categories denote the subcategory of semistable Galois
representations \cite{F2} of the category of finite dimensional $\Qp$-linear representations of the Galois group $G_K$. The rings $\B_{\st}$ and $\B_{\dr}$ are the semistable and de Rham period rings of Fontaine \cite{F2}. 
The category $MF^{\ad}_K(\phi,N)$   is naturally a Tannakian tensor ${\mathbf Q}_p$-category and $(D_0,\phi,N,F^{\scriptscriptstyle\bullet})\mapsto D_0$ is a fiber functor over $K_0$.

 A filtered $(\phi,N,G_K)$-module is a  tuple $(D_0,\phi,N,\rho, F^{\scriptscriptstyle\bullet})$, where
\begin{enumerate}
\item $D_0$ is a  finite dimensional $K_0^{\nr}$-vector
space;
\item  $\phi : D_0 \to D_0$ is a Frobenius  map;
\item $N : D_0 \to D_0$ is  a $K_0^{\nr}$-linear monodromy map such that $N\phi = p\phi N$;
\item  $\rho$ is a $K_0^{\nr}$-semilinear $G_K$-action on $D_0$  (hence $\rho|I_K$ is linear) that factors through a finite quotient of the inertia $I_K$ and that commutes with $\phi$ and $N$;
 \item  $F^{\scriptscriptstyle\bullet}$ is a decreasing finite filtration  of $D_K:=(D\otimes _{K_0^{\nr}}\ovk)^{G_K}$ by $K$-vector
spaces.
\end{enumerate}
Morphisms between filtered $(\phi,N, G_K)$-modules are $K_0^{\nr}$-linear maps preserving all
structures. There is a notion of a {\em (weakly) admissible} filtered $(\phi, N,G_K)$-module \cite{CF}, \cite{F3}. 
Denote by $$DF_K:=MF_K^{\ad}(\phi,N,G_K)\subset MF_K^{}(\phi,N,G_K)\subset M_K(\phi,N,G_K)$$ the categories of
admissible filtered $(\phi, N,G_K)$-modules ($DF$ stands for Dieudonn\'e-Fontaine), filtered  $(\phi, N,G_K)$-modules, and $(\phi, N,G_K)$-modules, respectively.
The last category is built from tuples $(D_0,\phi,N,\rho)$ having properties 1, 2, 3, 4 above. We know \cite{CF} that the
pair of functors 
\begin{align*}
D_{\pst}(V) & =\injlim_H(\B_{\st}\otimes_{{\mathbf Q}_p}V)^{H},\quad
H\subset G_K - \text{an open subgroup,}\quad D_K(V):=(V\otimes_{\Qp}\B_{\dr})^{G_K}; \\
V_{\pst}(D) & =(\B_{\st}\otimes_{K_0^{\nr}}D_0)^{\phi=\id,N=0}\cap F^0(\B_{\dr}\otimes_{K}D_K)
\end{align*}
define an equivalence of categories $MF_K^{\ad}(\phi,N,G_K)\simeq \Rep_{\pst}(G_K)$, where the last
category denotes the category of potentially semistable Galois representations \cite{F2}. 
We have the abstract period isomorphisms
\begin{equation}
\label{abstract}
\rho_{\pst}: D_{\pst}(V)\otimes_{K_0^{\nr}}\B_{\st}\simeq V\otimes_{\Qp}\B_{\st},\quad \rho_{\dr}: D_{K}(V)\otimes_{K}\B_{\dr}\simeq V\otimes_{\Qp}\B_{\dr},
\end{equation}
where the first one is compatible with the action of $\phi,N$, and $G_K$, and the second one is compatible with filtration. 
The category $MF_{K}^{\pst}$   is naturally a Tannakian tensor ${\mathbf Q}_p$-category and 
$(D_0,\phi,N,\rho,F^{\scriptscriptstyle\bullet})\mapsto D_0$ is a fiber functor over $K^{\nr}_0$. We will denote by 
$\sd^b(DF_K)$ and $D^b(DF_K)$ its bounded derived dg category and bounded derived category, respectively.

   The category $M_K(\phi,N,G_K)$ is abelian. We will denote by $\sd^b_K(\phi,N,G_K)$ and $D^b_K(\phi,N,G_K)$ its bounded derived dg category and bounded derived category, 
   respectively. For $(\phi, N,G_K)$-modules $M,T$, let  $\Hom_{\phi,N,G_K}(M,T)$ be the group of $(\phi,N,G_K)$-module morphisms and let
   $\Hom_{G_K}(M,T)$  be the group of $K^{\nr}_0$-linear and $G_K$- equivariant morphisms. Let $\Hom^{\sharp}_{\phi, N, G_K}(M,T)$ be the complex
  $$\Hom_{G_K}(M,T)  \to \Hom_{G_K}(M,\phi_*T)\oplus \Hom_{G_K}(M,T)\to \Hom_{G_K}(M,\phi_*T).$$
  This complex is supported in degrees $0,1,2$ and the differentials are as above for $(\phi, N)$-modules. 
  Clearly, we have $\Hom_{\phi,N,G_K}(M,T)=H^0\Hom^{\sharp}_{\phi,N,G_K}(M,T)$.  Complexes $\Hom^{\sharp}_{\phi,N,G_K}$ compose naturally.  
   Arguing as in the proof of Lemma \ref{diagram1}, we can show that, for $M,T\in\sd^b_K(\phi,N,G_K)$, 
   \begin{equation}
    \label{isom1}
\R\Hom_{\phi,N,G_K}(M,T)\simeq \Hom_{\phi,N,G_K}^{\sharp}(M,T).
\end{equation} 
        
    Let $M$, $T$ be two complexes in $C^b(MF_K(\phi,N,G_K))$. Define the complex   $\Hom^{\flat}(M,T)$ as the following homotopy fiber
$$
 \Hom^{\flat}(M,T):=\Cone(\Hom^{\sharp}_{\phi, N, G_K}(M_0,T_0)\oplus  \Hom_{\dr}(M_K,T_K)\veryverylomapr{\can-\can}\Hom_{G_K}(M_{\ovk},T_{\ovk}) )[-1]
$$
Complexes $\Hom^{\flat}$ compose naturally.  
\begin{proposition}
\label{comp1}
 We have $\R\Hom_{DF_K}(M,T)\simeq \Hom_{}^{\flat}(M,T).$ 
 \end{proposition}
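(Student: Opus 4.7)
The plan is to view $MF_K(\phi,N,G_K)$ as a gluing of three data: a $(\phi,N,G_K)$-module $D_0$ over $K_0^{\nr}$, a filtered $K$-vector space $D_K$, and the $G_K$-equivariant identification $D_0\otimes_{K_0^{\nr}}\ovk\simeq D_K\otimes_K\ovk$. At the level of morphisms this yields, for $M,T\in MF_K(\phi,N,G_K)$, an exact sequence
\begin{equation*}
0\to\Hom(M,T)\to\Hom_{\phi,N,G_K}(M_0,T_0)\oplus\Hom_{\dr}(M_K,T_K)\to\Hom_{G_K}(M_{\ovk},T_{\ovk}),
\end{equation*}
the last map being the difference of the two scalar-extension maps. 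This is precisely the underived shadow of $\Hom^{\flat}$, so the statement amounts to deriving the three pieces separately and reassembling them into the same homotopy fiber.

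For the $(\phi,N,G_K)$-block I would invoke (\ref{isom1}), which gives $\R\Hom_{\phi,N,G_K}(M_0,T_0)\simeq\Hom^{\sharp}_{\phi,N,G_K}(M_0,T_0)$. For the de Rham block I would use the framework recalled in the opening paragraphs: $\R\Hom_{\dr}$ is computed by strict filtered resolutions in the quasi-abelian category $V^K_{\dr}$, and after passing to a strict model the derived Hom reduces to the plain $\Hom_{\dr}(M_K,T_K)$. For the Galois block, the $G_K$-action on our modules factors through a finite quotient of the inertia; with rational coefficients higher $G_K$-cohomology therefore vanishes, so $\R\Hom_{G_K}(M_{\ovk},T_{\ovk})$ reduces to $\Hom_{G_K}(M_{\ovk},T_{\ovk})$.

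To assemble the three pieces, I would construct a resolution of $M$ functorial in each structure: apply the $M\mapsto M_{\phi,N,G_K}$ recipe from the proof of Lemma \ref{diagram1} on the $K_0^{\nr}$-side, use a termwise strict filtered resolution on the $K$-side, and glue them along the canonical base-change map. Feeding this resolution into $\Hom^{\flat}(-,T)$ and using that $\Hom^{\flat}$ is itself defined as a homotopy fiber then exhibits $\Hom^{\flat}(M,T)$ as the derived Hom.

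The main obstacle is to reconcile this gluing computation with the admissibility condition: $DF_K$ is only the admissible full subcategory of the quasi-abelian $MF_K(\phi,N,G_K)$, so a priori the derived Hom computed inside $DF_K$ need not agree with one obtained by resolving in the larger category. Here one exploits the fact that $DF_K$ is abelian (being equivalent to $\Rep_{\pst}(G_K)$ via $V_{\pst}$) and that its inclusion into $MF_K(\phi,N,G_K)$ is exact on admissibles, so that an injective resolution in $DF_K$ can be built out of the gluing-compatible resolution above. With that bookkeeping in place, the homotopy-fiber expression on the right does compute $\R\Hom_{DF_K}(M,T)$.
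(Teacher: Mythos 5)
There is a genuine gap in the assembly step. Your opening reduction is the right shape: the sequence
$0\to\Hom(M,T)\to\Hom_{\phi,N,G_K}(M_0,T_0)\oplus\Hom_{\dr}(M_K,T_K)\to\Hom_{G_K}(M_{\ovk},T_{\ovk})$
is indeed the underived shadow of $\Hom^{\flat}$, and the identification of the first block with $\Hom^{\sharp}_{\phi,N,G_K}$ via (\ref{isom1}) is exactly what is needed. But that sequence is only \emph{left} exact, and the entire content of the proposition is that the cokernel of the last map dies after deriving. Your proposal offers no mechanism for this. Worse, the mechanism you do propose cannot work: the adjoint $M\mapsto M_{\phi,N}$ from the proof of Lemma \ref{diagram1} produces infinite products with non-bijective Frobenius, so it leaves $MF_K(\phi,N,G_K)$ (let alone $DF_K$) immediately; and $DF_K\simeq\Rep_{\pst}(G_K)$ is a category of finite-dimensional objects which has no nonzero injectives, so the claim that ``an injective resolution in $DF_K$ can be built out of the gluing-compatible resolution'' is false. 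One cannot compute $\R\Hom_{DF_K}$ by injective resolutions at all.

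The paper's proof (following Beilinson and Bannai) circumvents both problems. It writes $\Hom_{D(DF_K)}(M,T[i])=\injlim_{I_T}\Hom_{K^b(DF_K)}(M,L[i])$ as a filtered colimit over quasi-isomorphisms $T\to L$ in the homotopy category, identifies $\Hom_{K^b(DF_K)}(M,T[i])$ with $H^i(\ker(f_{M,T}))$ where $f_{M,T}$ is the map whose homotopy fiber defines $\Hom^{\flat}$, and then shows $\injlim_{I_T}H^i(\coker(f_{M,L}))=0$. This last vanishing is the key missing idea: given a class $u\in\Hom^j_{G_K}(M_{\ovk},T_{\ovk})$, Lemma \ref{help} constructs an explicit extension $E$ of $\Cone(M\xrightarrow{\id}M)[-j-1]$ by $T$ in the category of filtered $(\phi,N,G_K)$-modules --- admissible because admissible modules are closed under extensions --- together with a quasi-isomorphism $T\to E$ killing the image of $u$ in the cokernel. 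Without this extension construction (or some substitute for it), the reassembly of the three blocks into $\R\Hom_{DF_K}$ does not go through.
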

 \begin{proof}
 We follow the method of proof of Beilinson and Bannai \cite[Lemma 1.7]{BE0}, \cite[Prop. 1.7]{Ban}. Denote by $f_{M,T}$ the morphism 
 in the cone defining $\Hom_{}^{\flat}(M,T).$ We have the distinguished triangle
 $$\ker(f_{M,T})\to \Hom_{}^{\flat}(M,T)\to  \coker(f_{M,T})[-1]
 $$
We also have the functorial isomorphism 
$$\Hom_{K^b(DF_K)}(M,T[i])\stackrel{\sim}{\to}H^i(\ker(f_{M,T}))
$$ 
Hence a long exact sequence
$$\to H^{i-2}(\coker(f_{M,T}))\to \Hom_{K^b(DF_K)}(M,T[i])\to H^i(\Hom_{}^{\flat}(M,T))\to H^{i-1}(\coker(f_{M,T}))\to 
$$

  Let $I_T$ be the category whose objects are quasi-isomorphisms $s:T\to L$ in $K^b(DF_K)$ and whose morphisms are morphisms $L\to L^{\prime}$ in $K^b(DF_K)$ compatible with $s$. 
Since $\injlim_{I_T}\Hom_{K^b(DF_K)}(M,L[i])=\Hom_{D(DF_K)}(M,T[i]) $, it suffices to show that 
$\injlim_{I_T}H^i(Hom_{}^{\flat}(M,L))=H^i(Hom_{}^{\flat}(M,T))$ and that $\injlim_{I_T}H^{i}(\coker(f_{M,L}))=0$. The first fact follows from Lemma \ref{diagram1} and the second one from the  Lemma \ref{help}  below.
\end{proof}
 \begin{lemma}
 \label{help}
 Let $u\in \Hom^j_{G_K}(M_{\ovk},T_{\ovk})$. There exists a complex $E\in C^b(DF_K)$ and a quasi-isomorphism $T\to E$ such that the image of $u$ in the cokernel of the map $f$ is zero.
 \end{lemma}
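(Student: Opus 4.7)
The plan is to adapt the Beilinson--Bannai mapping-cylinder trick \cite{BE0,Ban} to our gluing diagram. Given $u$, we enlarge $T$ to a quasi-isomorphic complex $E \in C^b(DF_K)$ by attaching an acyclic ``cone-on-identity'' summand built from $M$, with the filtration on the $\dr$-component twisted by (the Galois descent of) $u$, so that in $E$ the map $u$ becomes realizable as the difference $\can(a) - \can(b)$ of two canonical comparison maps.

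Since $u$ is $G_K$-equivariant and $\ovk$-linear and since $M_K \otimes_K \ovk = M_{\ovk}$, $T_K \otimes_K \ovk = T_{\ovk}$, Galois descent yields a unique $K$-linear map $b' : M_K \to T_K$ of degree $j$ with $b' \otimes_K \ovk = u$; in general $b'$ is not filtered. Set $A := M[-j-1]$ and let $C := \Cone(\id_A) = A \oplus A[1]$, a contractible complex of admissible filtered $(\phi, N, G_K)$-modules. Define
$$
E_0^k := T_0^k \oplus A_0^k \oplus A_0^{k+1}, \qquad E_K^k := T_K^k \oplus A_K^k \oplus A_K^{k+1},
$$
with the direct-sum $(\phi, N, G_K)$-structure on $E_0$ and the cone differential on the $A$-summand. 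Twist the filtration on $E_K$ using $b'$:
$$
F^p E_K^k := \{(t, a, a') : t + b'(a') \in F^p T_K^k,\ a \in F^p A_K^k,\ a' \in F^p A_K^{k+1}\},
$$
choosing the signs in the cone differential (and a compensating $(db')$-term if $b'$ is not a cocycle) so that $d_E$ is strict for the twisted filtration. The inclusion $T \hookrightarrow E$ is then a strict quasi-isomorphism fitting into a short exact sequence $0 \to T \to E \to C \to 0$ in $C^b(MF_K(\phi, N, G_K))$, with acyclic cokernel $C$.

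To see that $E \in C^b(DF_K)$, note that each $A^k \in DF_K$ (since $M \in C^b(DF_K)$), so $C^k$ is admissible as a direct sum of admissibles, and admissibility is preserved under extensions in $MF_K(\phi, N, G_K)$ (a consequence of the Colmez--Fontaine equivalence $DF_K \simeq \Rep_{\pst}(G_K)$, under which extensions correspond to those of potentially semistable Galois representations). Hence $E \in C^b(DF_K)$ and $T \to E$ lies in $I_T$. To conclude, define the degree-$j$ maps $a : m \mapsto (0, 0, m)$ and $b : m \mapsto (-b'(m), 0, m)$. Then $a \in \Hom^{\sharp, j}_{\phi, N, G_K}(M_0, E_0)$ by the direct-sum $(\phi, N, G_K)$-structure on $E_0$, and $b \in \Hom^j_{\dr}(M_K, E_K)$ because $b(F^p M_K) \subseteq F^p E_K$ by the definition of the twisted filtration. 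Passing to $\ovk$, $(\can(a) - \can(b))(m) = (u(m), 0, 0)$, which equals $u$ viewed in $E_{\ovk}$ via $T_{\ovk} \hookrightarrow E_{\ovk}$. Thus $u = f_{M, E}(a, b)$ lies in the image of $f_{M, E}$, so its class in $\coker(f_{M, E})$ vanishes. The main technical obstacle is the sign and cocycle bookkeeping needed to make $d_E$ compatible with the twisted filtration; admissibility of $E$ itself is automatic from the extension-closure of $DF_K$.
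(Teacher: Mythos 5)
Your construction is essentially the paper's own proof: both build $E$ as an extension of the contractible complex $\Cone(\id_M)[-j-1]$ by $T$ with the direct-sum $(\phi,N,G_K)$-structure, twist only the Hodge filtration by $u$ (your Galois descent of $u$ to $b'$ is a cosmetic reformulation), invoke closure of admissibles under extensions, and exhibit $u$ as $f(\xi)$ for the same explicit $\xi$. The ``compensating $(db')$-term'' you defer to bookkeeping is precisely the graph term $\{(d_T(u^{i-1}(x)),-x,-d_M(x))\}$ that the paper writes into its filtration, so nothing is missing.
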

 \begin{proof}We will construct an extension
 $$0\to T\to E\to \Cone(M\stackrel{\id}{\to} M)[-j-1]\to 0
 $$
 in the category of filtered $(\phi,N,G_K)$-modules. Since the category of admissible modules is closed under extension, $E$ will be admissible.
 The underlying complex of $K_0^{\nr}$-vector spaces is
 $$E_0:=\Cone(M_0[-j-1]\stackrel{(0,\id)}{\to} T_0\oplus M_0[-j-1])
 $$
 The Frobenius, monodromy operator, and Galois action are defined on $E_{0}^{i+j}:=T^{i+j}_0\oplus M_0^{i-1}\oplus M^i_0$ coordinatewise. The filtration on
  $E^{i+j}_{K}:=E^{i+j}_0\otimes_{K^{\nr}_0}\ovk$ is defined as 
 \begin{align*}
 F^nE^{i+j}_{K}=F^nT^j_{K} & \oplus \{(u^i(x),0,x)|x\in F^nM^i_{K}\}\\
  & \oplus \{(d_T(u^{i-1}(x)),-x,-d_M(x))|x\in F^nM_{K}^{i-1}\}
 \end{align*}
  Now take $\xi=(0,0,\id)+(u^i,0,\id)\in  \Hom^{\sharp}_{\phi, N, G_K}(M^i_0,E^{i+j}_0)\oplus  \Hom_{\dr}(M^i_K,E^{i+j}_K) $. We have $f(\xi)=(u^i,0,0)$, as wanted.
 \end{proof}
 \end{num}
\subsection{The category of $p$-adic Hodge complexes}
\begin{num} Let $V_{\ovk}^G$ be the category of $\ovk$-vector spaces with a smooth $\ovk$-linear action of $G_K$.  It is a Grothendieck abelian category. We will consider the following functors:
\begin{itemize}
\item $F_{\dr}:V_{\dr}^K \rightarrow V_{\ovk}^G$,
which to a filtered $K$-vector space $(E,F\kr)$ associates
 the $\ovk$-vector space $E \otimes_K \ovk$ with its natural action of $G_K$.
\item $F_0:M_K(\phi,N,G_K) \rightarrow V_{\ovk}^G$, which to a $(\phi,N,G_K)$-module
 $M$ associates the $\ovk$-vector space $M \otimes_{K_0^{\nr}} \bar K$
 whose $G_K$-action is induced by the given $G_K$-action on $M$.
\end{itemize}
Both functors are exact and monoidal. Note in particular that
 they induces functors on the respective categories of complexes
 which are dg-functors.
\end{num}
\begin{num}\label{df:DpH}
 Let $\sd^b(V_{\ovk}^G)$ and $D^b(V_{\ovk}^G)$ denote the bounded derived dg category and the bounded derived category of $V_{\ovk}^G$, respectively.
We define the dg category $\sd_{pH}$ of $p$-adic Hodge complexes as the homotopy limit 
$$\sd_{pH}:=\holim(\sd^b(M_K(\phi,N,G_K))\stackrel{F_0}{\to} \sd^b(V_{\ovk}^G)\stackrel{F_{\dr}}{\leftarrow}\sd^b(V_{\dr}^K))
$$
We denote by  $D_{pH}$  the homotopy category of $\sd_{pH}$.
By \cite[Def. 3.1]{Ta}, \cite[4.1]{BB}, an object of $\sd_{pH}$ consists of objects $M_{0}\in \sd^b(M_K(\phi,N,G_K))$,
$M_{K}\in \sd^b(V^{K}_{\dr})$,  and a quasi-isomorphism
$$
F_0(M_{0})\stackrel{a_M}{\to}  F_{\dr}(M_{K})$$
 in $\sd(V^G_{\ovk})$. We will denote the object above by 
$
M=(M_0, M_{K}, a_M).
$
 The morphisms are given by the complex $ \Hom_{\sd_{pH}}((M_0,M_{K},a_M),(N_0,N_{K},a_N))$:
\begin{equation}
\begin{split}
 &\Hom^i_{\sd_{pH}}((M_0,M_{K},a_M),(N_0,N_{K},a_N)) \\
  & \qquad\quad=\Hom^i_{\sd^b(M_K(\phi,N,G_K))}(M_0,N_0)\oplus \Hom^i_{\sd^b(V^K_{\dr})}(M_{K},N_{K})\oplus \Hom^{i-1}_{\sd^b(V_{\ovk}^G)}(F_0(M_0),F_{\dr}(N_{K}))
\end{split}
\end{equation}
 The differential is given by
 $$
 d(a,b,c)=(da,db,dc+a_NF_0(a)-(-1)^{i}F_{\dr}(b)a_M)
 $$
and the composition 
 \begin{align}
 \label{hom1}
    \Hom_{\sd_{pH}}((N_0,N_{K},a_N), & (T_0,T_{K},a_T))  \otimes  \Hom_{\sd_{pH}}((M_0,M_{K},a_M),(N_0,N_{K},a_N))\\
    & \to  \Hom_{\sd_{pH}}((M_0,M_{K},a_M),(T_0,T_{K},a_T))\notag
 \end{align}
 is given by
 $$
 (a^{\prime},b^{\prime},c^{\prime})(a,b,c)=(a^{\prime}a,b^{\prime}b,c^{\prime}F_0(a)+F_{\dr}(b^{\prime})c)
 $$
It  now follows easily that a (closed) morphism 
$(a,b,c)\in \Hom_{\sd_{pH}}((M_0,M_{K},a_M),(N_0,N_{K},a_N))$ is a quasi-isomorphism if and only so are the morphisms $a$ and $b$ (see \cite[Lemma 4.2]{BB}).

By definition, we get a commutative square of  dg categories over $\Qp$:
\begin{equation} \label{eq:specialization_pH}
\xymatrix@=10pt{
\sd_{pH}\ar^{T_{\dr}}[r]\ar_{T_{0}}[d] & \sd^b(V_{\dr}^K)\ar^{F_{\dr}}[d] \\
\sd^b(M_K(\phi,N,G_K))\ar^-{F_0}[r] & \sd^b(V_{\ovk}^G).
}
\end{equation}
Given a $p$-adic Hodge complex $M$,
 we will call $T_{\dr}(M)$ (resp. $T_0(M)$) the \emph{generic fiber}
 (resp. \emph{special fiber}) of $M$.
 As pointed out above, a morphism $f$ of $p$-adic Hodge complexes is a quasi-isomorphism
 if and only if $T_{\dr}(f)$ and $T_{0}(f)$ are quasi-isomorphisms.
\end{num}
\begin{num}

  Let us recall that,
 since the category $\sd_{pH}$ is obtained by gluing, it
 has a canonical $t$-structure \cite[Prop. 4.1.12]{HMF}.
We will denote by $\sd_{pH,\leq 0}$
 (resp. $\sd_{pH,\geq0}$) the full  dg subcategory of $\sd_{pH}$ made of non-positive (resp. non negative) $p$-adic Hodge complexes.
 Let $M$ be a $p$-adic Hodge complex.
 We define its non positive truncation $\tau_{\leq 0}(M)$ according to
 the following formula:
$$
\tau_{\leq 0}(M):=(\tau_{\leq 0}M_0,\tau_{\leq 0}M_{K},
\tau_{\leq 0}a_M).
$$
The functors $F_{\dr}$ and $F_0$ being exact, this is indeed
 a $p$-adic Hodge module. The non negative truncation is obtained
 using the same formula. According to this definition,
 we get a canonical morphism of $p$-adic Hodge complexes:
$$
\tau_{\leq 0}(M) \rightarrow M
$$
whose cone is positive.
This is all we need to get that the
 pair $(\sd_{pH,\leq 0},\sd_{pH,\geq 0})$ forms a $t$-structure
 on $\sd_{pH}$.
\begin{definition}
The $t$-structure $(\sd_{pH,\leq 0},\sd_{pH,\geq 0})$ defined above
 will be called the canonical $t$-structure on $\sd_{pH}$.
\end{definition}
\end{num}

\begin{num}
 Let  $M\in C^b(MF_K(\phi,N,G_K))$. Define $\theta(M)\in \sd_{pH}$ to be the object
$$
\theta(M):=(M_0,M_K,\id_M:M_{\ovk}\simeq M_{\ovk})
$$
Since $\theta$ preserves quasi-isomorphisms, it induces a functor
$$
\theta: \quad\sd^b(DF_K)\to \sd_{pH}
$$
This is a functor between dg categories compatible with the $t$-structures. 
Through this functor we can regard $MF_K(\phi,N,G_K)$ as  a subcategory  of the heart of the $t$-structure on $\sd_{pH}$. 
\begin{lemma}
\label{heart1}
The natural functor
$$
\theta:\quad MF_K(\varphi,N,G_K) {\rightarrow }\sd_{pH}^{\heartsuit}
$$is fully faithful.
\end{lemma}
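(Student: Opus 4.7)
The plan is to compute the morphism space $\Hom_{\sd_{pH}^{\heartsuit}}(\theta(M),\theta(T)) = H^0\Hom_{\sd_{pH}}(\theta(M),\theta(T))$ directly using the formula in \ref{df:DpH} for the Hom complex in $\sd_{pH}$, and to match it with $\Hom_{MF_K(\phi,N,G_K)}(M,T)$.

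First I would observe that since $M_0,T_0,M_K,T_K$ are all concentrated in degree $0$, each of the three summands
$\Hom^i_{\sd^b(M_K(\phi,N,G_K))}(M_0,T_0)$, $\Hom^i_{\sd^b(V^K_{\dr})}(M_K,T_K)$, and $\Hom^i_{\sd^b(V_{\ovk}^G)}(F_0(M_0),F_{\dr}(T_K))$ computes an Ext-group in the appropriate abelian (or, for $V^K_{\dr}$, quasi-abelian) category and therefore vanishes for $i<0$. In particular $\Hom^{-1}_{\sd_{pH}}(\theta(M),\theta(T))=0$, so there are no boundaries to quotient out, and $H^0$ is simply the space of closed degree-$0$ morphisms.

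Next, I would unpack the cocycle condition using $a_M=a_T=\id$. A closed degree-$0$ element is a triple $(a,b,c)$ with $c\in \Hom^{-1}_{\sd^b(V_{\ovk}^G)}=0$, with $a\colon M_0\to T_0$ a morphism of $(\phi,N,G_K)$-modules, and $b\colon M_K\to T_K$ a filtered $K$-linear map. The differential formula $d(a,b,c) = (da,db,dc+F_0(a)-F_{\dr}(b))$ reduces the closed condition to $F_0(a)=F_{\dr}(b)$, i.e.\ $a\otimes_{K_0^{\nr}}\id_{\ovk}$ and $b\otimes_K\id_{\ovk}$ must coincide as $G_K$-equivariant $\ovk$-linear maps.

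The final step is Galois descent: since the $G_K$-action on $M_0$ (resp.\ $T_0$) factors through a finite quotient of $I_K$, the canonical map $M_K\otimes_K\ovk \to M_0\otimes_{K_0^{\nr}}\ovk$ is an isomorphism, and similarly for $T$. Under this identification, the condition $F_0(a)=F_{\dr}(b)$ forces $b$ to be the restriction of $a\otimes\id$ to $G_K$-invariants, so the pair $(a,b)$ is determined by $a$ alone, and the existence of such a filtered $b$ is equivalent to $a|_{M_K}$ preserving the filtration — precisely the defining condition for $a$ to be a morphism in $MF_K(\phi,N,G_K)$. The assignment $a\mapsto (a,a|_{M_K},0)$ is thus a bijection $\Hom_{MF_K(\phi,N,G_K)}(M,T)\stackrel{\sim}{\to} H^0\Hom_{\sd_{pH}}(\theta(M),\theta(T))$, proving the lemma. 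The only non-formal input is the Galois descent step, which is the main obstacle to verify; everything else is a mechanical unwinding of the formulas defining $\sd_{pH}$.
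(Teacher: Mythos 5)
Your argument is correct, and it supplies exactly the computation that the paper delegates to its citations of \cite[Prop. 4.1.12]{HMF} and \cite[1.2.27]{Sn}: compute $H^0$ of the glued Hom complex as the set of pairs $(a,b)$ that agree after base change to $\ovk$, then use Galois descent for the smooth semilinear $G_K$-action to see that such a pair is the same datum as a morphism of filtered $(\phi,N,G_K)$-modules. The one imprecision is your claim that $\Hom^{-1}_{\sd_{pH}}(\theta(M),\theta(T))=0$: in the dg-quotient model of $\sd^b(-)$ the negative-degree components of the Hom complexes need not literally vanish --- only their cohomology (the negative Ext groups) does. The clean way to phrase your first step is via the long exact sequence of the homotopy fibre defining $\Hom_{\sd_{pH}}$, where $H^{-1}\Hom_{\sd^b(V_{\ovk}^G)}(M_{\ovk},T_{\ovk})=\Ext^{-1}=0$ yields
$$H^0\Hom_{\sd_{pH}}(\theta(M),\theta(T))\simeq\ker\bigl(\Hom_{\phi,N,G_K}(M_0,T_0)\oplus\Hom_{V^K_{\dr}}(M_K,T_K)\to\Hom_{G_K}(M_{\ovk},T_{\ovk})\bigr),$$
after which your descent argument applies verbatim.
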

\begin{proof}
Analogous to \cite[Prop. 4.1.12]{HMF}, \cite[1.2.27]{Sn}.
\end{proof}
\begin{definition}
We will say that a strict $p$-adic Hodge complex $M$
 is \emph{admissible} if its cohomology filtered $(\phi,N,G_K)$-modules
 $H^n(M)$ are (weakly) admissible. Denote by $\sd_{pH}^{\ad}$ the full dg subcategory of $\sd_{pH}$ of admissible $p$-adic Hodge complexes.
\end{definition}
The functor $\theta$ induces a canonical functor:
$$
\theta:\quad \sd^b(DF_K) \rightarrow \sd^{\ad}_{pH}.
$$
\begin{lemma}
\label{heart2}
The natural functor
$$
\theta:\quad DF_K \stackrel{\sim}{\rightarrow }\sd_{pH}^{\ad,\heartsuit}
$$is an equivalence of abelian categories.
\end{lemma}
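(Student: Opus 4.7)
The plan is to reduce the statement to Lemma \ref{heart1} together with a direct construction of an inverse functor. Since $DF_K$ is a full subcategory of $MF_K(\phi,N,G_K)$, and $\sd_{pH}^{\ad,\heartsuit}$ is by definition the full subcategory of $\sd_{pH}^{\heartsuit}$ cut out by the admissibility condition on the heart cohomology, fully faithfulness of $\theta: DF_K \to \sd_{pH}^{\ad,\heartsuit}$ is inherited immediately from the fully faithfulness of the unrestricted $\theta$ in Lemma \ref{heart1}. So only essential surjectivity requires genuine work.

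For essential surjectivity, I would start by unpacking what it means for a strict $p$-adic Hodge complex $M=(M_0,M_K,a_M)$ to lie in $\sd_{pH}^{\heartsuit}$: $M_0$ must be concentrated in degree zero as a complex of $(\phi,N,G_K)$-modules, $M_K$ must be concentrated in degree zero in $V^K_{\dr}$ (here strictness is essential to get a genuine filtered $K$-vector space rather than an object of $LH(V^K_{\dr})$), and the quasi-isomorphism $a_M$ reduces to an honest isomorphism $M_0\otimes_{K_0^{\nr}}\ovk \xrightarrow{\sim} M_K\otimes_K\ovk$ in $V_{\ovk}^G$. From this data I construct a filtered $(\phi,N,G_K)$-module $N$ by taking $(M_0,\phi,N,\rho)$ as the underlying Dieudonn\'e--Fontaine datum, setting $N_K:=(M_0\otimes_{K_0^{\nr}}\ovk)^{G_K}$, and transporting the filtration on $M_K$ to $N_K$ via the isomorphism obtained by combining $G_K$-invariants of $a_M$ with the canonical identification $M_K \xrightarrow{\sim}(M_K\otimes_K\ovk)^{G_K}$.

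Then $\theta(N)=(M_0,N_K,\mathrm{id})$ comes equipped with a canonical morphism $\theta(N)\to M$ in $\sd_{pH}$ whose three components are: the identity on $M_0$, the filtration-preserving isomorphism $N_K\xrightarrow{\sim} M_K$ on the generic fiber, and a zero homotopy; the differential condition in the $\mathrm{Hom}$ complex of $\sd_{pH}$ is satisfied precisely because the isomorphism $N_K\otimes\ovk\simeq M_K\otimes\ovk$ coincides with $a_M$ by construction. Since both components are isomorphisms, this is a quasi-isomorphism, hence an iso in the heart. The admissibility hypothesis on $M$ is exactly what guarantees $N\in DF_K$, so $\theta$ is essentially surjective.

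The main obstacle is the bookkeeping: one must verify that the passage $M\leadsto N$ is well-defined and functorial, and in particular that the filtration transferred to $(M_0\otimes_{K_0^{\nr}}\ovk)^{G_K}$ does not depend on ancillary choices and produces (after applying $\theta$) the datum $M_K$ back up to the prescribed isomorphism. Once this compatibility is in place, the equivalence of abelian categories follows formally from Lemma \ref{heart1}.
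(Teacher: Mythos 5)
Your argument is correct and follows essentially the same route as the paper's proof: reduce to the truncation $\tau_{\leq 0}\tau_{\geq 0}(M)$ (equivalently, to degree-zero data $(M_0,M_K,a_M)$ with $a_M$ an honest $G_K$-equivariant isomorphism), observe that this datum is precisely a filtered $(\phi,N,G_K)$-module after descending the filtration along $a_M$, and invoke Lemma \ref{heart1} for fully faithfulness. Your write-up merely makes explicit the Galois-descent bookkeeping that the paper leaves implicit in the phrase ``$\widetilde M_{0}$ has the structure of a filtered $(\varphi,N,G_K)$-module.''
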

\begin{proof}
By definition, a strict $p$-adic Hodge complex $M$ is in the heart of the $t$-structure
 if and only if $M$ is isomorphic to $\tau_{\leq 0} \tau_{\geq 0}(M)$.
 According to the formula for this truncation,
 we get that $M$ is isomorphic to an object $\widetilde M$ such that
 $\widetilde M_0$ is a $(\varphi,N,G_K)$-module, $\widetilde M_{K}$ is
 a filtered $K$-vector space, and one has a
 $G_K$-equivariant isomorphism
$$
\widetilde M_{0} \otimes_{K_0^{\nr}} \ovk
 \simeq \widetilde M_{K} \otimes_{K} \ovk.
$$
In particular, $\widetilde M_{0}$ has the structure of a filtered
 $(\varphi,N,G_K)$-module.
Conversely, it is clear that a filtered $(\varphi,N,G_K)$-module
 induces a $p$-adic Hodge complex concentrated in degree $0$. This proves our lemma.
\end{proof}
\begin{theorem}
\label{thm1}
The functor $\theta$ induces an equivalence of dg categories
$$
\theta: \quad\sd^b(DF_K) \stackrel{\sim}{\rightarrow} \sd^{\ad}_{pH}
$$
\end{theorem}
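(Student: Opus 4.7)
The plan is to show that $\theta$ is a quasi-equivalence of dg categories by establishing (i) $\theta$ is fully faithful at the dg level, and (ii) $\theta$ is essentially surjective on the underlying homotopy category $D^{\ad}_{pH}$. The strategy parallels the classical realization procedure of Beilinson used by Bannai in \cite{Ban}.

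For step (i), I would take $M, T \in C^b(DF_K)$ viewed as strict complexes and unpack the explicit formula defining the morphism complex in $\sd_{pH}$: $\Hom_{\sd_{pH}}(\theta(M),\theta(T))$ is, up to the shift by $[-1]$, the mapping cone of
$$\Hom_{\sd^b(M_K(\phi,N,G_K))}(M_0,T_0) \oplus \Hom_{\sd^b(V_{\dr}^K)}(M_K,T_K) \longrightarrow \Hom_{\sd^b(V_{\ovk}^G)}(F_0 M_0, F_{\dr} T_K).$$
Equation (\ref{isom1}) identifies the first summand with $\Hom^{\sharp}_{\phi,N,G_K}(M_0,T_0)$; the second is, by construction of the dg enhancement of the filtered derived category, $\Hom_{\dr}(M_K,T_K)$; and an argument modelled on the proof of Lemma \ref{diagram1}, using the right adjoint to the forgetful functor $V_{\ovk}^G \to V_{\ovk}$ to produce injective resolutions together with vanishing of positive $G_K$-cohomology on $\ovk$-vector spaces (Hilbert 90 applied to the finite quotient of $G_K$ through which the action factors), identifies the third summand with $\Hom_{G_K}(M_{\ovk},T_{\ovk})$. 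The resulting cone is exactly the complex $\Hom^{\flat}(M,T)$, which by Proposition \ref{comp1} computes $\R\Hom_{DF_K}(M,T)$.

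For step (ii), I would proceed by induction on the cohomological amplitude of $M \in \sd^{\ad}_{pH}$. The base case, where $M$ has cohomology in a single degree, is covered by Lemma \ref{heart2}. For the inductive step, the canonical $t$-structure on $\sd_{pH}$ gives a distinguished triangle
$$\tau_{\leq b-1} M \to M \to H^b(M)[-b] \stackrel{+1}{\to} \tau_{\leq b-1}M[1]$$
in $D_{pH}$, with both outer terms admissible (since $DF_K$ is closed under extensions inside $MF_K(\phi,N,G_K)$) and so, by induction, of the form $\theta(N_1), \theta(N_2)$ for some $N_1, N_2 \in \sd^b(DF_K)$. The connecting morphism lifts uniquely through $\theta$ by step (i), and the cone of the lift in $\sd^b(DF_K)$ maps under $\theta$ to an object isomorphic to $M$ in $D^{\ad}_{pH}$.

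The main obstacle will be the identification in step (i) of the third dg-Hom summand with $\Hom_{G_K}(M_{\ovk},T_{\ovk})$, together with the bookkeeping of differentials and signs needed to match the cone exactly with $\Hom^{\flat}$. Once step (i) is secured, the induction in step (ii) is a standard $t$-structure argument relying only on Lemma \ref{heart2}, the compatibility of $\theta$ with the $t$-structures, and extension-closure of admissibility.
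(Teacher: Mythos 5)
Your proposal is correct and follows essentially the same route as the paper: the paper also reduces the theorem to (a) the equivalence of hearts from Lemma \ref{heart2} together with the standard $t$-structure/amplitude induction for essential surjectivity, and (b) dg-level full faithfulness obtained by unwinding $\Hom_{\sd_{pH}}(\theta(M),\theta(M'))$ as the homotopy limit of the three Hom complexes and identifying it, via (\ref{isom1}) and Proposition \ref{comp1}, with $\Hom^{\flat}(M,M')\simeq\R\Hom_{DF_K}(M,M')$. The only difference is one of explicitness: you spell out the Galois-descent justification for replacing $\Hom_{\sd^b(V_{\ovk}^G)}$ by $\Hom_{G_K}$ and the induction for surjectivity, both of which the paper leaves implicit.
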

\begin{proof}
Since, by Lemma \ref{heart2},  we have the equivalence of abelian categories
$$
\theta: \quad DF_K=\sd^b(DF_K)^{\heartsuit}\stackrel{\sim}{\to}\sd^{\ad,\heartsuit}_{pH}
$$
it suffices to show that, given two complexes $M$, $M'$ of $C^b(MF_K^{pst})$,
 the functor $\theta$ induces a quasi-isomorphism:
$$
\theta:\quad \Hom_{\sd^b(DF_K)}(M,M')
 \rightarrow \Hom_{\sd_{pH}}(\theta(M),\theta(M'))
$$ By (\ref{isom1}) and Proposition \ref{comp1}, since $F_0(M_0)=F_{\dr}(M_K)=M_{\ovk}, F_0(M^{\prime}_0)=F_{\dr}(M^{\prime}_K)=M_{\ovk}$, 
we have the following sequence of quasi-isomorphisms 
 \begin{align*}
     \Hom_{\sd_{pH}}(\theta(M),\theta(M')) & =\Hom_{\sd_{pH}}((M_0,M_{K},\id_M)  ,(M^{\prime}_0,M^{\prime}_{K},\id_{M^{\prime}}))\\
      & \simeq (\Hom_{\sd^b(M_K(\phi, N, G_K))}(M_0,M^{\prime}_0)\stackrel{F_0}{\to}  \Hom_{\sd^b(V^G_{\overline{K}})}(M_{\ovk},M^{\prime}_{\ovk}) \stackrel{F_{\dr}}{\leftarrow}  \Hom_{\sd^b(V^K_{\dr})}(M_{K},M^{\prime}_{K}))\\
 & \simeq (\Hom^{\sharp}_{\phi, N, G_K}(M_0,M^{\prime}_0)\stackrel{F_0}{\to}  \Hom_{G_K}(M_{\ovk},M^{\prime}_{\ovk}) \stackrel{F_{\dr}}{\leftarrow}  \Hom_{\dr}(M_{K},M^{\prime}_{K}))\\
 & \simeq \Hom_{\sd^b(DF_K)}(M,M^{\prime}).
 \end{align*}
 This concludes our proof.
\end{proof}
\end{num}
\subsection{The absolute $p$-adic Hodge cohomology}
\begin{num}
Any potentially semistable $p$-adic representation is a $p$-adic
 Hodge complex. Therefore, we can define the Tate twist in $\sd_{pH}$
 as follows: given any integer $r \in \bz$, we let
 $K(r)$ be the $p$-adic Hodge complex $$K(-r)=(K_0^{\nr}, K,\id_K: \ovk\stackrel{\sim}{\to} \ovk )$$
that is equal to $K_0^{\nr}$ and $K$ concentrated in degree $0$;
the Frobenius is $\phi_{K(-r)}(a)=p^r\phi(a)$, the Galois action is canonical and the monodromy operator is zero;
 the filtration is $F^i=K$ for $i\leq r$ and zero otherwise.
% Note that $K(0)$ is just the unit for the monoidal structure
% on $\sd_{pH}$ ????.

As usual, given any $p$-adic Hodge complex $M$,
 we put $M(r):=M\otimes K(r)$. In other words,
 twisting a $p$-adic Hodge complex $r$-times
 divides the Frobenius by $p^r$, leaves unchanged the
 monodromy operator, and shifts the filtration $r$-times.
\end{num}

\begin{example}
\label{ex1}
Given any $p$-adic Hodge complex $M$, by formula (\ref{hom1}) and by (\ref{isom1}),
 we have the quasi-isomorphism of complexes of $\Qp$-vector spaces
 $$\Hom_{\sd_{pH}}(K(0),M(r))\simeq 
\Cone(M^{\sharp}_0 \oplus F^rM_K\verylomapr{a_M-\can}F_{\dr}(M_K)^{G_K})[-1],
$$
where $M^{\sharp}_0$ is defined as the homotopy limit of the following diagram (we set $\phi_i:=\phi/p^i$)
$$
M^{\sharp}_0:=\left[\begin{aligned}\xymatrix{
 M^{G_K}_0\ar[r]^{1-\phi_r}\ar[d]_N & M^{G_K}_0\ar[d]^N\\
M^{G_K}_0\ar[r]^{1-\phi_{r-1}}  & M^{G_K}_0}
\end{aligned}\right]
$$
\end{example}
\begin{num}
Let $X$ be a variety over $K$. Consider the following complex in $\sd^{\ad}_{pH}$
$$
\R\Gamma_{pH}(X_{\ovk},0):=( \R\Gamma^B_{\hk}(X_{\ovk}), (\R\Gamma_{\dr}(X),F^{\jcdot}), \R\Gamma^B_{\hk}(X_{\ovk})\stackrel{\iota_{\dr}}{\longrightarrow}\R\Gamma_{\dr}(X_{\ovk}))
$$
Here $\R\Gamma^B_{\hk}(X_{\ovk})$ is the (geometric) Beilinson-Hyodo-Kato cohomology \cite{BE1}, \cite[3.3]{NN}; by definition it is a bounded 
complex of $(\phi, N, G_K)$-modules. The filtered complex $\R\Gamma_{\dr}(X)$ is the Deligne de Rham cohomology. The map $\iota_{\dr}$ is the Beilinson-Hyodo-Kato map \cite{BE1} that induces a quasi-isomorphism 
$$\iota_{\dr}: \quad \R\Gamma^B_{\hk}(X_{\ovk})\otimes_{K_0^{\nr}}\ovk \stackrel{\sim}{\to}\R\Gamma_{\dr}(X_{\ovk}).$$ The comparison theorems of $p$-adic Hodge theory (proved in \cite{Fa1}, \cite{Ts1}, \cite{Ni2}, \cite{BE1}, \cite{Bh}) imply that the $p$-adic Hodge complex $\R\Gamma_{pH}(X_{\ovk},0)$ is admissible. 

  We will denote by $$\R\Gamma_{pH}(X_{\ovk},r):=\R\Gamma_{pH}(X_{\ovk},0)(r)\in \sd^{\ad}_{pH}$$ the  $r$'th Tate twist of $\R\Gamma_{pH}(X_{\ovk},0)$. We will call it
{\em the geometric $p$-adic Hodge cohomology of $X$}. It is a dg $\Q_p$-algebra. Since the Beilinson-Hyodo-Kato map is a map of dg $K_0^{\nr}$-algebras,
the assignment
$X\mapsto \R\Gamma_{pH}(X_{\ovk},r)$ is a presheaf of dg $\Q_p$-algebras on ${\mathcal V}ar_K$ and we also have the external product
$\R\Gamma_{pH}(X_{\ovk},r)\otimes \R\Gamma_{pH}(Y_{\ovk},s)$ in $\sd_{pH}^{\ad}$.
\begin{lemma}[K\"{u}nneth formula]\label{lm:kunneth_abs_synt}
The natural map
$$
\R\Gamma_{pH}(X_{\ovk},r)\otimes \R\Gamma_{pH}(Y_{\ovk},s)\stackrel{\sim}{\to }\R\Gamma_{pH}(X_{\ovk}\times Y_{\ovk},r+s)
$$ is a quasi-isomorphism.
\end{lemma}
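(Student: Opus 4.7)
The plan is to use the characterization of quasi-isomorphisms in $\sd_{pH}$ noted after diagram \eqref{eq:specialization_pH}: a morphism $f$ of $p$-adic Hodge complexes is a quasi-isomorphism if and only if both $T_0(f)$ and $T_{\dr}(f)$ are quasi-isomorphisms. Since the Tate twists satisfy $K(r)\otimes K(s)=K(r+s)$ essentially by construction (the Frobenius, monodromy, Galois action and filtration on the tensor product coincide with those of $K(r+s)$), the claim reduces immediately to the case $r=s=0$, where both factors are complexes of algebras and the map is induced by the external cup product.

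I would then verify the two separate Künneth statements. On the generic fiber, $T_{\dr}$ gives the usual map
$$
(\R\Gamma_{\dr}(X),F^{\jcdot})\otimes (\R\Gamma_{\dr}(Y),F^{\jcdot}) \longrightarrow (\R\Gamma_{\dr}(X\times Y),F^{\jcdot})
$$
in $\sd^b(V^K_{\dr})$, whose underlying map of $K$-vector spaces is a quasi-isomorphism by the classical algebraic de Rham Künneth formula, and whose filtered strictness reduces to the degeneration of the Hodge-to-de Rham spectral sequence for $X$ and $Y$ (i.e., strictness of the Hodge filtration). On the special fiber, $T_0$ gives the Beilinson-Hyodo-Kato Künneth map
$$
\R\Gamma^B_{\hk}(X_{\ovk})\otimes_{K_0^{\nr}} \R\Gamma^B_{\hk}(Y_{\ovk})\longrightarrow \R\Gamma^B_{\hk}(X_{\ovk}\times Y_{\ovk})
$$
in $\sd^b(M_K(\phi,N,G_K))$; this is established in Beilinson \cite{BE1} (and recalled in \cite[3.3]{NN}) as part of the fact that $\R\Gamma^B_{\hk}(-)$ is a presheaf of dg $K_0^{\nr}$-algebras with the expected multiplicative behavior.

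Finally, I would check that these two Künneth quasi-isomorphisms are compatible with the gluing data, i.e., that the square formed by $a_M\otimes a_N$ on one side and $a_{M\otimes N}$ on the other commutes up to the appropriate homotopy in $\sd^b(V^G_{\ovk})$. This is precisely the statement that the Beilinson-Hyodo-Kato map $\iota_{\dr}$ is multiplicative, which is explicitly recorded just before the lemma (``the Beilinson-Hyodo-Kato map is a map of dg $K_0^{\nr}$-algebras'') and is part of the construction of \cite{BE1}.

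The main obstacle is the strictness of the filtered de Rham Künneth map, but this is a standard consequence of Hodge-de Rham degeneration (applied to smooth compactifications of $X$ and $Y$ with normal crossings boundary, as in Deligne's construction of the Hodge filtration). Once all three pieces — de Rham Künneth, Hyodo-Kato Künneth, and multiplicativity of $\iota_{\dr}$ — are in place, assembling them into a quasi-isomorphism in $\sd^{\ad}_{pH}$ is automatic from the definition of $\sd_{pH}$ as a homotopy limit.
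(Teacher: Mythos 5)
Your proposal is correct and follows essentially the same route as the paper, whose proof is a one-line reduction to the K\"unneth formulas for filtered de Rham and Hyodo--Kato cohomology together with the multiplicativity of $\iota_{\dr}$. The only cosmetic difference is that the paper obtains the Hyodo--Kato K\"unneth by transporting the de Rham one through the Hyodo--Kato quasi-isomorphism after base change to $\ovk$, whereas you invoke Beilinson's multiplicative structure directly; both are fine.
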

\begin{proof}
This follows easily from the K\"{u}nneth formulas in the filtered de Rham cohomology and the Hyodo-Kato cohomology (use the Hyodo-Kato map to pass to de Rham cohomology).
\end{proof}
Set 
\begin{align*}
\R\Gamma_{DF_K}(X_{\ovk},r) & :=\theta^{-1}\R\Gamma_{pH}(X_{\ovk},r)\in\sd^b(DF_K),\\
\R\Gamma_{\pst}(X_{\ovk},r) & :=V_{\pst}\theta^{-1}\R\Gamma_{pH}(X_{\ovk},r)\in \sd^b(\Rep_{\pst}(G_K)).
\end{align*}
\begin{lemma}
\label{pst=et}
There exists a canonical quasi-isomorphism in $\sd^b(\Rep(G_K))$
$$\R\Gamma_{\pst}(X_{\ovk},r) \simeq \R\Gamma_{\eet}(X_{\ovk},\Q_p(r)). 
$$
\end{lemma}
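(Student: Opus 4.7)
The plan is to apply $V_{\pst}$ termwise to a model of $\R\Gamma_{DF_K}(X_{\ovk},r)$ and identify the result with étale cohomology via Beilinson's functorial $p$-adic comparison theorems. The starting point is that by Colmez-Fontaine \cite{CF}, $V_{\pst}\colon DF_K\stackrel{\sim}{\to}\Rep_{\pst}(G_K)$ is an exact equivalence of abelian categories, hence extends to an equivalence of bounded derived categories. In particular, for any $C^{\scriptscriptstyle\bullet}\in \sd^b(DF_K)$ one has $H^n(V_{\pst}C^{\scriptscriptstyle\bullet})\simeq V_{\pst}(H^n C^{\scriptscriptstyle\bullet})$.

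The first step is to compute cohomology. By Theorem \ref{thm1} and the construction of $\R\Gamma_{pH}(X_{\ovk},r)$, the filtered $(\phi,N,G_K)$-module $H^n\R\Gamma_{DF_K}(X_{\ovk},r)$ has underlying $(\phi,N,G_K)$-module $H^n\R\Gamma^B_{\hk}(X_{\ovk})(r)$ and filtered $K$-vector space $H^n_{\dr}(X)$ with the Hodge filtration shifted by $r$, glued via the Beilinson-Hyodo-Kato isomorphism $\iota_{\dr}$. The comparison theorems of $p$-adic Hodge theory \cite{Fa1},\cite{Ts1},\cite{Ni2},\cite{BE1},\cite{Bh} identify this datum with $D_{\pst}(H^n_{\eet}(X_{\ovk},\Q_p(r)))$, compatibly with the abstract period isomorphisms (\ref{abstract}). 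Applying $V_{\pst}$ therefore gives $H^n V_{\pst}\R\Gamma_{DF_K}(X_{\ovk},r)\simeq H^n_{\eet}(X_{\ovk},\Q_p(r))$ as $G_K$-representations.

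The second step is to promote this cohomology-level identification to a natural quasi-isomorphism of complexes in $\sd^b(\Rep(G_K))$. For this I would use Beilinson's complex-level period quasi-isomorphisms
$$
\rho_{\st}\colon \R\Gamma_{\eet}(X_{\ovk},\Q_p)\otimes_{\Q_p}\B_{\st}\stackrel{\sim}{\to}\R\Gamma^B_{\hk}(X_{\ovk})\otimes_{K_0^{\nr}}\B_{\st},
$$
$$
\rho_{\dr}\colon \R\Gamma_{\eet}(X_{\ovk},\Q_p)\otimes_{\Q_p}\B_{\dr}\stackrel{\sim}{\to}\R\Gamma_{\dr}(X_{\ovk})\otimes_K\B_{\dr},
$$
compatible with $\phi$, $N$, the Galois action, and the filtration. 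Combining Fontaine's formula $V_{\pst}(D)=(\B_{\st}\otimes_{K_0^{\nr}}D_0)^{\phi=\id,N=0}\cap F^0(\B_{\dr}\otimes_K D_K)$ applied termwise to a suitable model with $\rho_{\st}$ and $\rho_{\dr}$ produces the natural morphism $V_{\pst}\R\Gamma_{DF_K}(X_{\ovk},r)\to \R\Gamma_{\eet}(X_{\ovk},\Q_p(r))$, and this morphism is a quasi-isomorphism by the first step.

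The main obstacle is executing the second step rigorously: one has to choose a model of $\R\Gamma_{DF_K}(X_{\ovk},r)$ (equivalently, of $\R\Gamma_{pH}(X_{\ovk},r)\in\sd^{\ad}_{pH}$) on which Fontaine's formula for $V_{\pst}$ can be computed termwise in a way compatible with Beilinson's complex-level comparison maps, and verify naturality in $X$ and independence of the choices. This coherence is essentially built into Beilinson's functorial construction in \cite{BE1}, so no essentially new input is required beyond \cite{BE1} and \cite{CF}.
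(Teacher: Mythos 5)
Your proposal is correct and follows essentially the same route as the paper: the paper realizes $V_{\pst}$ at the complex level by a functor $r_{\eet}$ defined on all of $\sd^{\ad}_{pH}$, sending $(M_0,M_K,a_M)$ to the homotopy fiber of $[M_0\otimes_{K_0^{\nr}}\B_{\st}]^{\phi=\id,N=0}\oplus F^0(M_K\otimes_K\B_{\dr})\to F_{\dr}(M_K)\otimes_{\ovk}\B_{\dr}$ (the complex-level form of Fontaine's formula, identified with the identity functor on $\sd^b(\Rep_{\pst}(G_K))$ via the fundamental exact sequence and the abstract period isomorphisms), and then composes with Beilinson's complex-level $\rho_{\pst}$ and $\rho_{\dr}$ exactly as you propose. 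The model-choice obstacle you flag in your last paragraph does not actually arise, since $\R\Gamma_{pH}(X_{\ovk},r)$ is already a concrete triple of complexes in $\sd^{\ad}_{pH}$ and $r_{\eet}$ is a functor on that whole category, so the coherence and naturality are automatic.
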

\begin{proof}
To start, we note that we have the following commutative diagram  of dg categories. 
$$
\xymatrix{
\sd^b(\Rep_{\pst}(G_K))\ar[d]^{D_{\pst}}\ar[rr]^{\can} & &\sd^b(\Rep(G_K))\ar[d]^{\can}\\
\sd^b(DF_K)\ar[u]^{V_{\pst}}\ar[r]^{\theta} & \sd^{\ad}_{pH}\ar[r]^-{r_{\eet}} & \sd(\Spec(K)_{\proeet})
}
$$
 Here the functor
$$
r_{\eet}:\quad   \sd^{\ad}_{pH}\to\sd(\Spec(K)_{\proeet})
$$
associates  to a $p$-adic Hodge complex $(M_0,M_K,a_M: F_0(M_0)\to F_{\dr}(M_K))$
  the complex
\begin{align*}
& [[M_0\otimes_{K_0^{\nr}}\B_{\st}]^{\phi=\id,N=0} \oplus F^0(M_K\otimes _K\B_{\dr})
        \xrightarrow{a_M\otimes\iota-\can\otimes \iota} F_{\dr}(M_K)\otimes_{\ovk}\B_{\dr}]\\
&=[[M_0\otimes_{K_0^{\nr}}\B_{\st}]^{\phi=\id,N=0} 
        \xrightarrow{a_M\otimes\iota} (F_{\dr}(M_K)\otimes_{\ovk}\B_{\dr})/F^0]  
\end{align*}
where $\iota: \B_{\st}\hookrightarrow \B_{\dr}$ is the canonical map of period rings.
To see that the diagram commutes, recall that we have the fundamental exact sequence
\begin{equation}
\label{fund}
0\to \Qp\to \B_{\st}^{\phi=\id, N=0}\oplus F^r\B_{\dr}\stackrel{\iota}{\to} \B_{\dr}\to 0
\end{equation}
It follows that, for $V\in \sd^b(\Rep_{\pst}(G_K))$,  we have a canonical morphism
\begin{align*}
V & \simeq [V\otimes_{\Qp}\B_{\st}^{\phi=\id,N=0} \oplus V\otimes _{\Qp}F^0\B_{\dr}
        \xrightarrow{\id\otimes\iota-\can\otimes \iota} V\otimes_{\Qp}\B_{\dr}]\\
         & \simeq [[V\otimes_{\Qp}\B_{\st}]^{\phi=\id,N=0} \oplus V\otimes _{\Qp}F^0\B_{\dr}
        \xrightarrow{\id\otimes\iota-\can\otimes \iota} V\otimes_{\Qp}\B_{\dr}]\\
               & \verylomapr{(\rho_{\pst}\oplus\rho_{\dr},\rho_{\dr})}[[D_{\pst}(V)\otimes_{K_0^{\nr}}\B_{\st}]^{\phi=\id,N=0} \oplus F^0(D_K(V)\otimes _K\B_{\dr})
        \xrightarrow{a_M\otimes\iota-\can\otimes \iota} D_K(V)\otimes_{\ovk}\B_{\dr}]\\
        & \simeq r_{\eet}\theta D_{\pst}(V)
        \end{align*}
        Since the abstract period morphisms $\rho_{\pst}, \rho_{\dr}$ from (\ref{abstract}) are isomorphisms, the above morphism is a quasi-isomorphism and we have the commutativity we wanted. 
        
        The above diagram gives us the first quasi-isomorphism in the following formula.
 $$\R\Gamma_{\pst}(X_{\ovk},r) \simeq r_{\eet}\R\Gamma_{pH}(X_{\ovk},r)\simeq \R\Gamma_{\eet}(X_{\ovk},\Q_p(r)).
  $$
   It suffices now to prove the second quasi-isomorphism. But, we have 
$$\R\Gamma_{pH}(X_{\ovk},r)=(\R\Gamma_{\hk}^B(X_{\ovk},r), (\R\Gamma_{\dr}(X), F^{\jcdot+r}), \R\Gamma_{\hk}^B(X_{\ovk},r)\otimes_{K_0^{\nr}}\ovk\stackrel{\iota_{\dr}}{\to} \R\Gamma_{\dr}(X_{\ovk})),$$
where we twisted the Beilinson-Hyodo-Kato cohomology to remember the Frobenius twist.
 Recall that Beilinson has constructed period morphisms (of dg-algebras)  \cite[3.6]{BE00}, \cite[3.2]{BE1} \footnote{We will be using consistently Beilinson's definition of the period maps. It is likely that the uniqueness criterium stated in \cite{Ni3} can be used to show that these maps coincide with the other existing ones.}
 \begin{align*}
 \rho_{\pst}: & \quad \R\Gamma^B_{\hk}(X_{\ovk})\otimes _{K_0^{\nr}}\B_{\st} \simeq \R\Gamma_{\eet}(X_{\ovk},\Qp)\otimes _{\Qp}\B_{\st} ,\\
 \rho_{\dr}: & \quad  \R\Gamma_{\dr}(X_{\ovk})\otimes _{\ovk}\B_{\dr}\simeq \R\Gamma_{\eet}(X_{\ovk},\Qp)\otimes_{\Qp} \B_{\dr},
 \end{align*}
 The first morphism is compatible with Frobenius, monodromy, and $G_K$-action; the second one - with filtration.
These morphisms allow us to define a quasi-isomorphism
$$\beta: \quad r_{\eet}\R\Gamma_{pH}(X_{\ovk},r)\simeq \R\Gamma_{\eet}(X_{\ovk},\Qp(r))
$$ in $\sd(\Spec(K)_{\proeet})$
as the composition
\begin{align*}
\beta:\quad r_{\eet}\R\Gamma_{pH}(X_{\ovk},r)& =[[\R\Gamma^B_{\hk}(X_{\ovk},r)\otimes _{K_0^{\nr}}\B_{\st}]^{\phi=\id,N=0}
\lomapr{\iota_{\dr}}(\R\Gamma_{\dr}(X_{\ovk})\otimes _{\ovk}\B_{\dr})/F^r]\\
 & \veryverylomapr{(\rho_{\hk},\rho_{\dr})}
[\R\Gamma_{\eet}(X_{\ovk},\Qp(r))\otimes _{\Qp}\B_{\st}^{\phi=\id,N=0}\lomapr{\iota}(\R\Gamma_{\eet}(X_{\ovk},\Qp)\otimes _{\Qp}\B_{\dr})/F^r]\\
& \stackrel{\sim}{\leftarrow} \R\Gamma_{\eet}(X_{\ovk},\Qp(r))
\end{align*}
Here the last quasi-isomorphism follows from the fundamental exact sequence (\ref{fund}). We are done.
\end{proof}
\begin{remark}
\label{geomsc}
   The geometric $p$-adic Hodge cohomology $\R\Gamma_{pH}(X_{\ovk},r)$ we work with here is not the same as the geometric syntomic  cohomology $\R\Gamma_{\synt}(X_{\ovk,h},r)$ defined in \cite{NN}. 
While the first one, by the above corollary, represents the \'etale cohomology $\R\Gamma_{\eet}(X_{\ovk},\Q_p(r)) $, the second one represents only its piece, i.e., we have $\tau_{\leq r}\R\Gamma_{\synt}(X_{\ovk,h},r) \simeq \tau_{\leq r}\R\Gamma_{\eet}(X_{\ovk},\Q_p(r))$.
 \end{remark}
\end{num}
\begin{num}\label{num:main_construction}
The {\em  $p$-adic absolute Hodge cohomology of $X$} (also called {\em syntomic cohomology} of $X$ if this does not cause confusion) is  defined as
\begin{align}
\label{def1}
\R\Gamma_{\sh}(X,r)=\R\Gamma_{\synt}(X,r):=\Hom_{\sd_{pH}}(K(0),\R\Gamma_{pH}(X_{\ovk},r)).
\end{align}
By Theorem \ref{thm1}, we have 
\begin{align*}
\R\Gamma_{\sh}(X,r) & \simeq \Hom_{\sd^b(DF_K)}(K(0),\R\Gamma_{DF_K}(X_{\ovk},r))\\
 & \simeq \Hom_{\sd^b(\Rep_{\pst}(G_K))}(\Qp,\R\Gamma_{\pst}(X_{\ovk},r)).
\end{align*}
The assignment
$X\mapsto \R\Gamma_{\sh}(X,r)=\R\Gamma_{\synt}(X,r)$ is a presheaf of dg $\Q_p$-algebras on ${\mathcal V}ar_K$.

  Set $H^{i}_{\synt}(X,r):=H^i\R\Gamma_{\synt}(X,r)$. 
\begin{theorem}\label{thm:syntomic_descent}
\begin{enumerate}
\item There is a functorial syntomic descent  spectral sequence
\begin{equation}
\label{synts}{}^{\synt}E^{i,j}:=H^i_{\st}(G_K,H^j_{\eet}(X_{\ovk},\mathbf {Q}_p(r)))\Rightarrow H^{i+j}_{\synt}(X,r),
\end{equation}
where $H^i_{\st}(G_K,\cdot)$ is the group of (potentially) semistable extensions $\Ext^i_{\Rep_{\pst}(G_K)}(\Qp,\cdot)$ as defined in \cite[1.19]{FPR}.
\item There is a functorial syntomic period morphism
$$\rho_{\synt}:\quad \R\Gamma_{\synt}(X,r)\to \R\Gamma_{\eet}(X,\Qp(r)).
$$
\item The syntomic descent spectral sequence is compatible with the Hochschild-Serre spectral sequence
\begin{equation}
\label{syntss0}
{}^{\eet}E_2^{i,j}=H^i(G_K,H^j_{\eet}(X_{\ovk},\Qp(r)))
 \Rightarrow H^{i+j}_{\eet}(X,\Qp(r)).
\end{equation}
More specifically, there is a natural map ${}^{\synt}E_2^{i,j}\to {}^{\eet}E_2^{i,j}$ that is compatible with the syntomic period map $\rho_{\synt}$.
\end{enumerate}
\end{theorem}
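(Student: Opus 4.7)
The plan is to derive all three assertions from the identification
$$\R\Gamma_{\synt}(X,r)\simeq\R\Hom_{\sd^b(\Rep_{\pst}(G_K))}(\Q_p,\R\Gamma_{\pst}(X_{\ovk},r))$$
recorded in \S\ref{num:main_construction}, together with the comparison Lemma \ref{pst=et}. Both are already in hand, and all three statements should follow by rather formal manipulations of hyperext spectral sequences.

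First I would construct the spectral sequence (1) as the standard hyperext spectral sequence obtained from the $t$-structure filtration on $\R\Gamma_{\pst}(X_{\ovk},r)$,
$$E_2^{i,j}=\Ext^i_{\Rep_{\pst}(G_K)}(\Q_p,H^j\R\Gamma_{\pst}(X_{\ovk},r))\Rightarrow H^{i+j}_{\synt}(X,r).$$
Because $\R\Gamma_{\pst}(X_{\ovk},r):=V_{\pst}\theta^{-1}\R\Gamma_{pH}(X_{\ovk},r)$ lies in $\sd^b(\Rep_{\pst}(G_K))$ by construction, its cohomology objects are automatically potentially semistable representations, and Lemma \ref{pst=et} identifies them as $G_K$-modules with $H^j_{\eet}(X_{\ovk},\Q_p(r))$. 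Substituting this and applying the definition $H^i_{\st}(G_K,-):=\Ext^i_{\Rep_{\pst}(G_K)}(\Q_p,-)$ gives (\ref{synts}). Functoriality in $X$ is inherited from that of the Beilinson--Hyodo--Kato and Deligne de Rham inputs to $\R\Gamma_{pH}$.

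For (2), I would exploit the exact inclusion $u\colon\Rep_{\pst}(G_K)\hookrightarrow\Rep(G_K)$, which produces a natural comparison
$$\R\Hom_{\Rep_{\pst}(G_K)}(\Q_p,\R\Gamma_{\pst}(X_{\ovk},r))\longrightarrow\R\Hom_{\Rep(G_K)}(\Q_p,u\R\Gamma_{\pst}(X_{\ovk},r)).$$
By Lemma \ref{pst=et}, the target is canonically quasi-isomorphic to $\R\Hom_{\Rep(G_K)}(\Q_p,\R\Gamma_{\eet}(X_{\ovk},\Q_p(r)))\simeq\R\Gamma(G_K,\R\Gamma_{\eet}(X_{\ovk},\Q_p(r)))$, which, by the standard Galois descent for étale cohomology, is $\R\Gamma_{\eet}(X,\Q_p(r))$; this defines $\rho_{\synt}$. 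For (3), the same forgetful map $u$ induces a morphism of the two hyperext spectral sequences, which on $E_2$ is the restriction of extensions $\Ext^i_{\Rep_{\pst}(G_K)}(\Q_p,-)\to\Ext^i_{\Rep(G_K)}(\Q_p,-)=H^i(G_K,-)$, and on the abutment is $\rho_{\synt}$; compatibility is then automatic from the functoriality of the hypercohomology filtration.

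The only genuinely non-formal ingredient is Lemma \ref{pst=et}, i.e.\ the commutative diagram realizing $r_{\eet}\circ\theta\circ D_{\pst}\simeq\mathrm{can}$ and identifying $r_{\eet}\R\Gamma_{pH}$ with $\R\Gamma_{\eet}(X_{\ovk},\Q_p(r))$ via the Beilinson period isomorphisms and the fundamental exact sequence (\ref{fund})---this has already been carried out above. With that comparison in place, the remainder is a routine unpacking of the hyperext spectral sequence of an exact functor between abelian categories; I expect the only actual labor to lie in checking that the naturality of all these constructions in $X$ is preserved at each step.
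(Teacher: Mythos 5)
Your part (1) is exactly the paper's argument: the hyperext spectral sequence for $\R\Hom_{\sd^b(\Rep_{\pst}(G_K))}(\Qp,\R\Gamma_{\pst}(X_{\ovk},r))$ together with the identification of the cohomology objects via Lemma \ref{pst=et}. For parts (2) and (3), however, your route through the forgetful functor $u\colon\Rep_{\pst}(G_K)\to\Rep(G_K)$ contains a gap at the step
$$\R\Hom_{D^b(\Rep(G_K))}(\Qp,\R\Gamma_{\eet}(X_{\ovk},\Qp(r)))\simeq\R\Gamma(G_K,\R\Gamma_{\eet}(X_{\ovk},\Qp(r))).$$
The left-hand side computes Yoneda Ext groups in the abelian category of finite-dimensional continuous $\Qp$-representations; this category has no good supply of injectives, and the comparison with \emph{continuous} (cochain) Galois cohomology is not formal in degrees $\geq 2$. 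Since the Hochschild--Serre spectral sequence you must compare with in (3) has $E_2$-terms given by continuous cohomology, and since the target $\R\Gamma_{\eet}(X,\Qp(r))$ is recovered by descent only from continuous cohomology, you cannot simply assert this quasi-isomorphism. This is precisely why the paper does not pass through $D^b(\Rep(G_K))$ at all: it uses a functor $r_{\eet}\colon\sd^{\ad}_{pH}\to\sd(\Spec(K)_{\proeet})$ to the pro-\'etale site (built from $\B_{\st}$, $\B_{\dr}$, the period morphisms $\rho_{\pst},\rho_{\dr}$ and the fundamental exact sequence), where the descent statement $\R\Hom(\Qp,\R f^X_*\Qp(r))=\R\Gamma_{\eet}(X,\Qp(r))$ and the identification of the descent spectral sequence with Hochschild--Serre are genuine.

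The gap is repairable without changing your overall strategy: for the purpose of \emph{defining} $\rho_{\synt}$ and the map of $E_2$-pages one only needs a natural transformation, not an isomorphism, so you may compose your comparison map with the canonical map from Yoneda extensions in $\Rep(G_K)$ to continuous cohomology classes (equivalently, replace $D^b(\Rep(G_K))$ by the derived category of the pro-\'etale site, as the paper does). With that correction your construction agrees with the paper's $\rho_{\synt}=\beta\circ r_{\eet}$, and the compatibility in (3) follows, as you say, from functoriality of the two filtrations.
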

\begin{proof}
From the definition (\ref{def1}) of $\R\Gamma_{pH}(X_{\ovk},r)$ we obtain the following spectral sequence
$$E^{i,j}_2=\Ext^i_{\Rep_{\pst}(G_K)}(\Qp,H^j\R\Gamma_{\pst}(X_{\ovk},r))\Rightarrow H^{i+j}\R\Gamma_{\synt}(X,r)$$
Since, by Lemma \ref{pst=et}, 
we have  $\R\Gamma_{\pst}(X_{\ovk},r))\simeq  \R\Gamma_{\eet}(X_{\ovk},\Qp(r))$, 
the first statement of our theorem  follows.

   We define the syntomic period map $\rho_{\synt}: \R\Gamma_{\synt}(X,r)\to \R\Gamma_{\eet}(X,\Qp(r))$ as the composition
\begin{align*}
\rho_{\synt}:\quad \R\Gamma_{\synt}(X,r) & =
\Hom_{\sd_{pH}}(K(0),\R\Gamma_{pH}(X_{\ovk},r)))
 \stackrel{r_{\eet}}{\to }\Hom_{\sd(\Spec(K)_{\proeet})}(\Qp,r_{\eet}\R\Gamma_{pH}(X_{\ovk},r)))\\
 & \stackrel{\beta}{\to}\Hom_{\sd(\Spec(K)_{\proeet})}(\Qp,\R\Gamma_{\eet}(X_{\ovk},\Qp(r)))
=\R\Gamma_{\eet}(X,\Qp(r)).
\end{align*}
The second statement of the theorem follows.

 Finally, since the Hochschild-Serre spectral sequence
$$
{}^{\eet}E^{i,j}_2:=H^i(G_K,H^j(X_{\ovk},\bq_p(r)))\Rightarrow H^{i+j}(X,\bq_p(r))
$$
 can be identified with the spectral sequence
$${}^{\eet}E^{i,j}_2:=H^i(\Spec(K)_{\proeet},H^j(X_{\ovk},\bq_p(r)))\Rightarrow H^{i+j}(X,\bq_p(r))
$$
we get that the syntomic descent spectral sequence 
is compatible with the  Hochschild-Serre spectral sequence via the map $\rho_{\synt}$, as wanted.
\end{proof}
\begin{theorem}
\label{compsynpH}
Let $\R\Gamma_{\synt}(X_h,r) $ be the syntomic cohomology defined in \cite[3.3]{NN}.
There exists a natural quasi-isomorphism (in the classical derived category)
$$\R\Gamma_{\synt}(X_h,r)\stackrel{\sim}{\to} \R\Gamma_{\synt}(X,r), \quad r\geq 0.
$$ It is compatible with syntomic period morphisms and the syntomic as well as the \'etale descent spectral sequences.
\end{theorem}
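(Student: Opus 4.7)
The plan is to unfold the definition of $\R\Gamma_{\sh}(X,r)=\Hom_{\sd_{pH}}(K(0),\R\Gamma_{pH}(X_{\ovk},r))$ using Example \ref{ex1} and compare it term-by-term with the mapping fiber defining $\R\Gamma_{\synt}(X_h,r)$ in \cite{NN}. Specializing Example \ref{ex1} to $M=\R\Gamma_{pH}(X_{\ovk},0)$, for which $M_0=\R\Gamma^B_{\hk}(X_{\ovk})$, $M_K=\R\Gamma_{\dr}(X)$, and $a_M=\iota_{\dr}$, we obtain
$$\R\Gamma_{\sh}(X,r)\simeq\Cone\!\left(M_0^{\sharp}\oplus F^r\R\Gamma_{\dr}(X)\xrightarrow{\iota_{\dr}-\can}\R\Gamma_{\dr}(X_{\ovk})^{G_K}\right)[-1],$$
where $M_0^{\sharp}$ is the total complex of the square involving $1-\phi_r$, $1-\phi_{r-1}$ and $N$ applied to $(\R\Gamma^B_{\hk}(X_{\ovk}))^{G_K}$.

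The first step is Galois descent. Faithfully flat descent for $K\to\ovk$ yields $\R\Gamma_{\dr}(X_{\ovk})^{G_K}\simeq\R\Gamma_{\dr}(X)$. For Beilinson-Hyodo-Kato cohomology, the construction in \cite{BE1} produces a canonical $G_K$-equivariant isomorphism $\R\Gamma^B_{\hk}(X_{\ovk})\simeq\R\Gamma^B_{\hk}(X)\otimes_{K_0}K_0^{\nr}$ compatible with $\phi$, $N$ and $\iota_{\dr}$; since the $G_K$-action factors through a finite quotient of $I_K$, taking $G_K$-invariants recovers $\R\Gamma^B_{\hk}(X)$.

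The second step is to identify $M_0^{\sharp}$ with the complex $\R\Gamma^B_{\hk}(X)^{\phi=p^r,N=0}$ from \cite{NN}. After the descent, the bicomplex defining $M_0^{\sharp}$ is exactly the homotopy limit presentation of the derived $(\phi=p^r,N=0)$-eigenspace of $\R\Gamma^B_{\hk}(X)$ used in loc. cit., so the identification is formal modulo a sign check. Assembling these two steps yields the desired quasi-isomorphism
$$\R\Gamma_{\sh}(X,r)\simeq\bigl[\R\Gamma^B_{\hk}(X)^{\phi=p^r,N=0}\xrightarrow{\iota_{\dr}}\R\Gamma_{\dr}(X)/F^r\bigr]=\R\Gamma_{\synt}(X_h,r),$$
with the restriction $r\geq 0$ coming from the range in which the \cite{NN} definition is formulated.

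For compatibility with the syntomic period morphism, tracing through the constructions shows that the map $\rho_{\synt}$ defined in \S\ref{num:main_construction} via the functor $r_{\eet}$ and Beilinson's period isomorphisms $\rho_{\hk},\rho_{\dr}$ coincides, under the identifications above, with the period map of \cite{NN}, which uses the identical Beilinson comparison maps, merely assembled via the $h$-topology. Compatibility with the syntomic descent spectral sequence is then automatic, while compatibility with the \'etale (Hochschild-Serre) spectral sequence follows from Theorem \ref{thm:syntomic_descent}(3). The main obstacle is the careful bookkeeping for the bicomplex $M_0^{\sharp}$ after Galois descent -- matching signs and differentials and tracking the Frobenius twist through the $G_K$-invariants -- together with the diagrammatic verification that the two period maps coincide once both are expressed through the Beilinson comparison isomorphisms.
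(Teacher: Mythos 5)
Your proposal follows essentially the same route as the paper: the paper likewise starts from the mapping-fiber presentation of $\R\Gamma_{\synt}(X_h,r)$ (via \cite[Prop.~3.18]{NN}), invokes the Galois-descent identifications $\R\Gamma^B_{\hk}(X)\simeq\R\Gamma^B_{\hk}(X_{\ovk})^{G_K}$ and $\R\Gamma_{\dr}(X)\simeq\R\Gamma_{\dr}(X_{\ovk})^{G_K}$ (there cited as \cite[Prop.~3.20]{NN} rather than rederived from the construction in \cite{BE1}), and then matches the result with the computation of $\Hom_{\sd_{pH}}(K(0),\R\Gamma_{pH}(X_{\ovk},r))$ in Example \ref{ex1} together with Theorem \ref{thm1}. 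Your treatment of the compatibility statements is also consistent with the paper, which disposes of them as immediate from the construction.
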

\begin{proof}Let $r\geq 0$. 
Recall that we have a natural quasi-isomorphism \cite[Prop. 3.18]{NN}
$$
\R\Gamma_{\synt}(X_h,r)\simeq \Cone(\R\Gamma^B_{\hk}(X)^{\phi,N}\oplus F^r\R\Gamma_{\dr}(X)\verylomapr{\iota_{\dr}-\can}\R\Gamma_{\dr}(X))[-1],
$$
where
$$
\R\Gamma^B_{\hk}(X_{h})^{\phi,N}:=\left[\begin{aligned}
\xymatrix{\R\Gamma^B_{\hk}(X)\ar[r]^{1-\phi_r}\ar[d]^N& \R\Gamma^B_{\hk}(X)\ar[d]^N\\
\R\Gamma^B_{\hk}(X)\ar[r]^{1-\phi_{r-1}} & \R\Gamma^B_{\hk}(X)}
\end{aligned}
\right]
$$
and the complex $\R\Gamma^B_{\hk}(X)$ is the (arithmetic) Beilinson-Hyodo-Kato cohomology \cite{BE1} that comes equipped with the Beilinson-Hyodo-Kato 
map $\iota_{\dr}: \R\Gamma^B_{\hk}(X)\to \R\Gamma_{\dr}(X)$ \cite[3.3]{NN}. 

    Since $\R\Gamma^B_{\hk}(X)\simeq \R\Gamma^B_{\hk}(X_{\ovk})^{G_K}$ and 
    $\R\Gamma_{\dr}(X)\simeq \R\Gamma_{\dr}(X_{\ovk})^{G_K}$ by \cite[Prop. 3.20]{NN}, Example \ref{ex1} and  Theorem  \ref{thm1} yield
\begin{align*}
\R\Gamma_{\synt}(X_h,r)\simeq \Hom_{\sd_{pH}}(K(0),\R\Gamma_{pH}(X_{\ovk},r)))\simeq
 \Hom_{\sd^b(DF_K)}(K(0),\R\Gamma_{DF_K}(X_{\ovk},r)))\simeq  \R\Gamma_{\synt}(X,r),
\end{align*}
as wanted. The last claim of the theorem is now clear.
\end{proof}
\begin{remark}
The above theorems gives an alternative construction of the syntomic descent spectral sequence from \cite[4.1]{NN} (that construction used the geometric syntomic cohomology mentioned in Remark \ref{geomsc})
and an alternative proof of its compatibility with the Hochschild-Serre spectral sequence \cite[Theorem 4.8]{NN}. In the present approach the syntomic descent spectral sequence is a genuine descent spectral sequence: from  geometric \'etale cohomology to syntomic cohomology. In the approach of 
 \cite{NN} this sequence appears as a piece of a larger descent spectral sequence that remains to be understood. 
\end{remark}
\begin{remark}
\label{compact}
In everything above, the variety $X$ can be replaced by a finite simplicial scheme or a finite diagram of schemes. In particular, we obtain statements about  cohomology with compact support: use resolutions of singularities to get a compactification of the variety with a divisor with normal crossing at infinity and then represent cohomology with compact support as a cohomology of a finite simplicial scheme built from the closed strata.
In particular, we get the syntomic descent spectral sequence with compact support:
\begin{equation*}
{}^{\synt,c}E^{i,j}_2:=H^i_{\st}\big(G_K,H^j_{\eet,c}( X_{\ovk},\Qp(r))\big)
 \Rightarrow H^{i+j}_{\synt,c}(X,r)
\end{equation*}
that is compatible with the Hochschild-Serre spectral sequence for  \'etale cohomology with compact support. 
\end{remark}
\end{num}
\begin{corollary}
For $X$ smooth and proper over $K$, the syntomic descent spectral sequence (\ref{synts})  degenerates at $E_2$.
\end{corollary}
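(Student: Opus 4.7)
The approach follows the classical template of Deligne \cite{De1}. By the definition (\ref{def1}) and Lemma \ref{pst=et}, we have
$\R\Gamma_{\sh}(X, r) \simeq \Hom_{\sd^b(\Rep_{\pst}(G_K))}(\Qp, \R\Gamma_{\pst}(X_{\ovk}, r))$,
and the syntomic descent spectral sequence of Theorem \ref{thm:syntomic_descent} is precisely the hyper-Ext spectral sequence of this $\R\Hom$. It therefore suffices to prove that, for $X$ smooth and proper, the complex $\R\Gamma_{\pst}(X_{\ovk}, r)$ is \emph{formal} in $\sd^b(\Rep_{\pst}(G_K))$, that is, non-canonically quasi-isomorphic to $\bigoplus_j H^j_{\eet}(X_{\ovk}, \Qp(r))[-j]$: granted this, the functor $\Hom_{\sd^b(\Rep_{\pst}(G_K))}(\Qp, -)$ splits accordingly, identifying $E_\infty^{i,j}$ with $E_2^{i,j}$.

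To produce the formality, the plan is to exhibit a Lefschetz operator in the derived category of potentially semistable representations. Fix a projective embedding of $X$, let $\eta \in H^2_{\eet}(X_{\ovk}, \Qp(1))$ denote the class of a hyperplane section, and view $\eta$ via Lemma \ref{pst=et} as a morphism $\Qp \to \R\Gamma_{\pst}(X_{\ovk}, 1)[2]$ in $\sd^b(\Rep_{\pst}(G_K))$. Using the dg-algebra structure on $\R\Gamma_{pH}(X_{\ovk}, *)$ provided by the K\"unneth formula of Lemma \ref{lm:kunneth_abs_synt}, transported to $\R\Gamma_{\pst}$ along the functor $V_{\pst}\circ\theta^{-1}$, cup product with $\eta$ defines an endomorphism
$L \colon \R\Gamma_{\pst}(X_{\ovk}, r) \to \R\Gamma_{\pst}(X_{\ovk}, r+1)[2]$
for every $r$. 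The Hard Lefschetz Theorem for $p$-adic \'etale cohomology of smooth projective varieties (obtained by comparison from the $\ell$-adic case for $\ell \neq p$, or from Hard Lefschetz in de Rham cohomology via the period isomorphism of Lemma \ref{pst=et}) then guarantees that the induced maps
$L^{n-j} \colon H^j_{\eet}(X_{\ovk}, \Qp(r)) \to H^{2n-j}_{\eet}(X_{\ovk}, \Qp(r + n - j))$
are isomorphisms in $\Rep_{\pst}(G_K)$ for all $j \leq n := \dim X$.

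Given such an operator $L$, Deligne's formality criterion \cite{De1}, which is purely formal and applies in any $\Qp$-linear pseudo-abelian triangulated category (one builds the splitting inductively on $j$ using the primitive decomposition supplied by $L$), yields the required quasi-isomorphism $\R\Gamma_{\pst}(X_{\ovk}, r) \simeq \bigoplus_j H^j_{\eet}(X_{\ovk}, \Qp(r))[-j]$, and hence the degeneration of (\ref{synts}) at $E_2$. The main difficulty, as I see it, is not this last formal step but rather the honest construction of $L$ as a morphism in $\sd^b(\Rep_{\pst}(G_K))$: one must verify that the dg-algebra structure on the $p$-adic Hodge complex $\R\Gamma_{pH}(X_{\ovk}, *)$ descends through the equivalence $\theta$ of Theorem \ref{thm1} and through $V_{\pst}$, so that cup product with $\eta$ genuinely defines an endomorphism in the derived category and not merely after passing to cohomology. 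Once this point is settled, the remainder of the argument is entirely formal.
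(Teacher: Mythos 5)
Your route is genuinely different from the paper's. The paper runs Deligne's original argument from \cite{De1} directly on the spectral sequence: using Hard Lefschetz \cite[Thm 4.1.1]{De2} and the primitive decomposition (\ref{LPD}), it shows inductively that each $d_s$ vanishes on the summands $H^i_{\st}(G_K,L^kH^{i-2k}_{\prim}(X_{\ovk},\Qp(r-k)))$. You instead prove the stronger statement that $\R\Gamma_{\pst}(X_{\ovk},r)$ is formal in $\sd^b(\Rep_{\pst}(G_K))$ and deduce degeneration by applying $\R\Hom(\Qp,-)$ to the splitting. This is legitimate, and in fact the paper records exactly this decomposition as a Remark inside its proof (and later invokes Deligne's splitting criterion \cite{De3} for the realization functors, where the Lefschetz operator is taken to be the \emph{motivic} first Chern class of an ample bundle). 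The point you correctly isolate as the crux --- realizing $L$ as a morphism in the derived category rather than only on cohomology --- is slightly subtler than ``the dg-algebra structure descends through $\theta$'': a class $\eta\in H^2_{\eet}(X_{\ovk},\Qp(1))^{G_K}$ is not yet a morphism $\Qp\to\R\Gamma_{\pst}(X_{\ovk},1)[2]$ in $\sd^b(\Rep_{\pst}(G_K))$; one needs a lift of $\eta$ to $H^2_{\sh}(X,1)$, which exists because $\eta$ is an algebraic cycle class (syntomic/motivic Chern class). With that lift, cup product does give $L$ in the derived category and Deligne's criterion applies.

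There is, however, a concrete gap: the statement is for $X$ smooth and \emph{proper}, and your proof begins by fixing a projective embedding, which need not exist. The paper handles this by Chow's Lemma and resolution of singularities: choose a proper birational $f:Y\to X$ with $Y$ smooth projective; then $f_*f^*=\id$ makes $H^j(X_{\ovk},\Qp)$ a direct summand of $H^j(Y_{\ovk},\Qp)$ in $\Rep_{\pst}(G_K)$, compatibly with the spectral sequences, so degeneration for $Y$ forces degeneration for $X$. You need to add this reduction (or, in your formulation, argue that $\R\Gamma_{\pst}(X_{\ovk},r)$ is a retract of the formal complex $\R\Gamma_{\pst}(Y_{\ovk},r)$ and that this suffices for degeneration). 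With that supplement your argument is complete.
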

\begin{proof}The argument proceeds along standard lines \cite[Thm 1.5]{De1}. First, we treat the case of $X$ smooth and projective, of equal  dimension $d$. 
Recall that we have the Hard Lefschetz Theorem \cite[Thm 4.1.1]{De2}: If  $L \in H^2(X_{\ovk},\Qp(1))$ is   the class of a hyperplane, then 
 for $i \leq  d$, the map $L^i : H^{d-i}(X_{\ovk},\Qp) \to H^{d+i}(X_{\ovk},\Qp(i))$, $a\mapsto a \cup L^i$, is an
isomorphism. This gives us the Lefschetz primitive decomposition
 \begin{equation}
 \label{LPD}
 H^i(X_{\ovk},\Qp(r))=\oplus_{k\geq 0}L^kH^{i-2k}_{\prim}(X_{\ovk},\Qp(r-k)).
 \end{equation}

    Take $s\geq 2$. Assume that the differentials of our spectral sequence $d_2=\cdots=d_{s-1}=0$. We want to show that $d_s=0$.
 Note that  Hard Lefschetz gives us that the differentials
  $$d_s: \quad H^j_{\st}(G_K,H^{i-2k}_{\prim}(X_{\ovk},\Qp(r-k)) )\to  H^j_{\st}(G_K,H^{i-2k}(X_{\ovk},\Qp(r-k)) )$$
  are trivial and hence that so are the differentials
  $$d_s: \quad H^j_{\st}(G_K,L^kH^i_{\prim}(X_{\ovk},\Qp(r)) )\to  H^j_{\st}(G_K,H^i(X_{\ovk},\Qp(r)) ).  $$
 By (\ref{LPD}), this gives that $d_s=0$, as wanted. 
 \begin{remark}
  In fact, we have 
 the Decomposition Theorem, i.e., there is a natural quasi-isomorphism in $D^b(\Rep_{\pst}(G_K))$
 $$\bigoplus_i H^i(X_{\ovk},\Q_p)[-i]\stackrel{\sim}{\to}\R\Gamma_{\pst}(X_{\ovk},\Q_p)
 $$
 \end{remark}
 
   For a general smooth and proper variety $X$, we first use Chow Lemma and resolution of singularities to find a birational proper map $f: Y\to X$ from a smooth and projective  variety $Y$ over $K$. It suffices now to note that the maps $f^*: H^i(X_{\ovk},\Q_p)\hookrightarrow  H^i(Y_{\ovk},\Q_p)$, $i\geq 0$, are injective to get the degeneration we wanted. 
 \end{proof}
\section{A $p$-adic absolute Hodge cohomology, II: Beilinson's definition}
In this section we will describe the definition of $p$-adic absolute Hodge cohomology due to Beilinson \cite{BS}. Beilinson associates to any variety over $K$  a canonical  complex of potentially semistable representations of $G_K$ representing the geometric \'etale cohomology of the variety as a Galois module. Then he defines $p$-adic absolute Hodge cohomology of this variety as the derived $\Hom$ in the category of potentially semistable representations from  the trivial representation to this complex.
\subsection{Potentially semistable complex of a variety}
\subsubsection{Potentially semistable cellular complexes}
The Basic  Lemma of Beilinson \cite[Lemma 3.3]{B0}  allows one,   in analogy with the cellular complex for $CW$-complexes, to associate a canonical complex of potentially semistable representations of  $G_K$ to any  affine variety over $K$. Recall that the cellular
complex associated to a $CW$-complex X is a complex of singular homology groups 
\begin{equation}
\label{cellular}
\cdots \to H^B_2(X^2, X^1)\stackrel{d_2}{\to} H^B_1(X^1, X^0)\stackrel{d_1}{\to} H^B_0(X^0,\emptyset)\stackrel{d_0}{\to} 0 
\end{equation}
where $X^j$ denotes the $j$-skeleton of X. The homology of the above  complex computes 
the singular homology of $X$: 
we have  $H^B_j(X^j/X^{j-1})\simeq H^B_j(\vee_{|I|} S^j)\simeq \sum_{i\in I} e_i\bz,$ $I$ being the index set of $j$-cells in $X$.

 We will briefly sketch the construction of potentially semistable (cohomological) cellular complexes and we refer
interested reader for details to \cite{KK}, \cite{N1}, \cite{HMS}.
\begin{definition}
\begin{enumerate}
\item A {\em pair} is a  triple $(X,Y,n)$, for   a closed $K$-subvariety $Y\subset X$ of a  $K$-variety $X$ and an integer $n$.
 \item  Pair $(X,Y,n)$  is called a {\em good pair} if the relative geometric \'etale cohomology
$$
H^j(X_{\ovk},Y_{\ovk},\bq_p)=0,\quad \text{unless } j\neq n.
$$
\item A good pair is called {\em very good} if $X$ is affine and $X\setminus Y$ is smooth and either $X$ is of dimension $n$ and $Y$ of dimension $n-1$ or $X=Y$ is of dimension less than $n$. 
\end{enumerate}
\end{definition}
\begin{lemma}(Basic Lemma)
Let $X$ be an affine variety over $K$  and let $Z\subset X$ be a closed subvariety such that $\dim(Z)< \dim(X)$. Then there is a closed subvariety $Y\supset Z$ such that  $\dim(Y)< \dim(X)$ and $(X,Y,n)$, $n:=\dim(X)$,  is a good pair, i.e.,
$$
H^j(X_{\ovk},Y_{\ovk},\bq_p)=0,\quad  j\neq n.
$$
Moreover, $X\setminus Y$ can be chosen to be smooth.
\end{lemma}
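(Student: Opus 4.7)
The plan is to realize $Y$ as the intersection of $X$ with a generic hyperplane section of an ambient projective compactification, chosen so that it contains the closure of $Z$ together with the boundary at infinity, and then to deduce the vanishing of relative cohomology from the Artin--Lefschetz hyperplane theorem for \'etale cohomology of affine varieties.

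First, after replacing $Z$ by $Z\cup X^{\mathrm{sing}}$, I may assume $X\setminus Z$ is smooth. Fix a projective compactification $\bar X\supset X$ together with a closed embedding $\bar X\hookrightarrow \PP^N_K$; composing with a sufficiently high Veronese embedding if necessary, one arranges that the ambient linear system of hyperplanes is ample enough on $\bar X$ for the argument below. Let $D:=\bar X\setminus X$ be the boundary, $\bar Z$ the Zariski closure of $Z$ in $\bar X$, and set $T:=D\cup\bar Z\cup\bar X^{\mathrm{sing}}$, a proper closed subset of $\bar X$. The key geometric step is a Bertini-type construction: select a hyperplane $H\subset\PP^N_K$ with $T\subset H$ such that $\bar X\cap H$ is transversal to the smooth stratum $\bar X\setminus T$. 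Such an $H$ exists generically in the (nonempty, positive-dimensional) sublinear system of hyperplanes through $T$, using that the base field $K$, of characteristic zero, is infinite.

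Setting $Y:=X\cap H$ yields $Z\subset Y$, $\dim Y<n$, and $U:=X\setminus Y$ smooth affine of pure dimension $n$. The required vanishing $H^j(X_{\ovk},Y_{\ovk};\bq_p)=0$ for $j\neq n$ then follows from the long exact sequence of the pair combined with two classical inputs: Artin vanishing for affine varieties, which gives $H^j(X_{\ovk};\bq_p)=0$ for $j>n$ and $H^j(Y_{\ovk};\bq_p)=0$ for $j>n-1$; and the \'etale Lefschetz hyperplane theorem applied to the smooth affine $X$ with its generic hyperplane section $Y$, asserting that the restriction map $H^j(X_{\ovk};\bq_p)\to H^j(Y_{\ovk};\bq_p)$ is an isomorphism for $j\leq n-2$ and injective for $j=n-1$. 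Together these force all cohomology groups of the pair outside degree $n$ to vanish, and the smoothness of $X\setminus Y$ is built in by construction.

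The main obstacle lies in the Bertini step: one must simultaneously impose the closed linear condition $T\subset H$ and the open genericity condition ensuring transversality to the smooth stratification of $\bar X$ away from $T$. Checking that these two conditions are compatible—i.e., that the sublinear system of hyperplanes through $T$ is not entirely contained in the ``bad'' locus of non-transversal hyperplanes—requires a dimension count on an incidence variety and is the reason for the preliminary Veronese re-embedding, whose purpose is to enlarge the ambient linear system so that it remains sufficiently rich after the linear restriction imposed by $T$. Once $H$ is in hand, the cohomological vanishing is a formal consequence of the cited theorems, and the properties $\dim Y<n$, $Y\supset Z$, $X\setminus Y$ smooth are immediate.
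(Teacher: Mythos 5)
The paper does not actually prove this lemma: it is quoted from Beilinson \cite[Lemma 3.3]{B0} (whose argument works in any characteristic and goes through perverse sheaves and Artin vanishing) and from Nori's characteristic-zero geometric proof. Your outline is essentially Nori's proof, and its skeleton is right: the reduction making $X\setminus Z$ smooth, the choice of a high-degree hypersurface $H$ through $T=D\cup\bar Z\cup\bar X^{\mathrm{sing}}$ (the Veronese trick is exactly what makes the linear system through $T$ nonempty and mobile), the identification $X\setminus Y=\bar X\setminus H$ as a smooth affine variety, and the vanishing in degrees $j>n$ from Artin vanishing on $X$ and $Y$ via the long exact sequence are all correct.

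The gap is in the vanishing for $j<n$. You invoke "the \'etale Lefschetz hyperplane theorem applied to the smooth affine $X$", but $X$ is \emph{not} smooth (only $X\setminus Y$ is), and $H$ is \emph{not} a generic member of the ambient linear system: it is constrained to contain $T$, which has dimension $n-1$, so $H$ is maximally non-transversal to any stratification of $\bar X$ along $T$. The classical weak Lefschetz theorems (smooth projective, or generic hyperplane sections of smooth affine varieties) therefore do not apply, and the "Lefschetz with defect" bounds degrade to nothing when the non-transversality locus has dimension $n-1$. The statement you actually need — that for a possibly singular affine $X$ and a sufficiently general member $H$ of $|\mathcal I_T(d)|$, $d\gg 0$, the pair $(X,X\cap H)$ is $(n-1)$-connected — is the Goresky--MacPherson/Hamm Lefschetz theorem for singular spaces (transported to \'etale cohomology by the comparison isomorphism in characteristic zero), and proving it in this constrained form is the real content of Nori's argument; it is not a formal consequence of Artin vanishing plus Poincar\'e duality either, since duality only controls $H^*_c(U)\cong H^*(\bar X,\bar X\cap H)$, which differs from $H^*(X,X\cap H)$. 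You locate the difficulty in the Bertini existence of a transversal $H$ through $T$ (which is the easy part) rather than in the validity of the connectivity theorem for such an $H$. A secondary point: you must also put all irreducible components of $X$ of dimension $<n$ into $Z$ at the outset, since otherwise $X\setminus Y$ is not of pure dimension $n$ and the connectivity statement fails.
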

\begin{proof}
 See \cite[Lemma 3.3]{B0} (a result in any characteristic), \cite{N1}, \cite[7]{HNo}.
\end{proof}
\begin{corollary}
\label{cor1}
\begin{enumerate}

\item 
Every affine variety $X$ over $K$ has a {\em cellular stratification} 
$$
F_{\jcdot}X:\quad \emptyset=F_{-1}X\subset F_0X\subset \cdots \subset F_{d-1}X\subset F_dX=X
$$
That is, a stratification  by closed subvarieties such that the triple $(F_jX,F_{j-1}X,j)$ is very good. 
\item  Celullar stratifications  of  $X$ form a filtered system.
\item Let $f:X\to Y$ be a morphism of affine varieties over $K$. Let $F_{\jcdot}X$ be a cellular stratification on $X$. Then there exists a cellular stratification  $F_{\jcdot}Y$ such that $f(F_iX)\subset F_iY$.
\end{enumerate}
\end{corollary}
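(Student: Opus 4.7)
The plan is to prove all three parts uniformly by descending induction on the filtration index $j$, with the Basic Lemma supplying the inductive step. At each stage, given a stratum $F_{j+1}$ (an affine closed subvariety of dimension at most $j+1$), I would feed the Basic Lemma a carefully chosen closed subvariety $Z_j\subset F_{j+1}$ of dimension $\le j$, obtaining the next stratum $F_j\supset Z_j$ with $F_{j+1}\setminus F_j$ smooth and $(F_{j+1},F_j,j+1)$ good.

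For part (1), I would set $F_dX=X$ (with $d=\dim X$) and at each step take $Z_j$ to be a closed subvariety of $F_{j+1}X$ of dimension $\le j$ containing the singular locus of $F_{j+1}X$. Iterating down to $j=-1$ with $F_{-1}X=\emptyset$ produces the desired cellular stratification. For part (2), I would construct a common enlargement $G_\bullet X$ of two given stratifications $F_\bullet X$ and $F'_\bullet X$ by exactly the same descending induction, using the seed $Z_j=F_jX\cup F'_jX$ (of dimension $\le j$) at step $j$; the resulting $G_jX$ then contains both $F_jX$ and $F'_jX$, showing that cellular stratifications form a directed system. For part (3), I would first pad the stratification of $X$ by repetition to have length $\max(\dim X,\dim Y)$, then descend inductively on $Y$: at step $j$, starting from $F_{\dim Y}Y=Y$, I apply the Basic Lemma to $F_{j+1}Y$ with $Z_j=\overline{f(F_jX)}$ (a closed subvariety of dimension $\le\dim F_jX\le j$), yielding $F_jY\supset\overline{f(F_jX)}\supset f(F_jX)$.

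The main obstacle I anticipate is ensuring that the output triple $(F_{j+1},F_j,j+1)$ is \emph{very} good, not merely good: the first clause of the definition demands $\dim F_j=j$ exactly when $\dim F_{j+1}=j+1$, whereas the Basic Lemma only guarantees the strict inequality $\dim F_j<\dim F_{j+1}$. I plan to handle this by enlarging $Z_j$ by a generic hypersurface section of $F_{j+1}$ before applying the lemma, forcing $\dim F_j=j$ in the generic case; in the degenerate case $\dim F_{j+1}<j+1$, I would simply set $F_j:=F_{j+1}$, which lands in the second clause of the definition of a very good pair. This is purely bookkeeping, but it must be repeated consistently at every stage of each of the three inductions.
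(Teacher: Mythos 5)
Your argument is correct, and it is essentially the standard one: the paper itself offers no proof of this corollary beyond a citation to Corollaries D.11 and D.12 of \cite{HMS}, and the argument there is exactly your descending induction with the Basic Lemma, including the same bookkeeping (adjoining the singular locus and a hypersurface section to the seed $Z_j$ to force $\dim F_jX=j$, and collapsing to $F_j=F_{j+1}$ in the degenerate case so as to land in the second clause of the definition of a very good pair).
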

\begin{proof}
See Corollary D.11, Corollary D.12 in \cite{HMS}.
\end{proof}

  Having the above facts it is easy to associate a potentially semistable analog of the cellular complex (\ref{cellular}) to an affine variety $X$ over $K$ \cite[Appendix D]{HMS}. We just 
pick a cellular stratification  $$
F_{\jcdot}X: \quad \emptyset=F_{-1}X\subset F_0X\subset \cdots\subset F_{d-1}X\subset F_dX=X
$$ and take the complex
\begin{align*}
\R\Gamma_{\pst}(X_{\ovk},F_{\jcdot}X):=
0\to H^0(F_0X_{\ovk},\bq_p) & \to \cdots\to H^j(F_jX_{\ovk},F_{j-1}X_{\ovk},\bq_p)\\
 & \stackrel{d_j}{\to}H^{j+1}(F_{j+1}X_{\ovk},F_jX_{\ovk},\bq_p)\stackrel{d_{j+1}}{\to}
\cdots \to H^d(X_{\ovk},F_{d-1}X_{\ovk},\bq_p)\to 0
\end{align*}
This is a complex of Galois modules that, by $p$-adic comparison theorems, are potentially semistable. To get rid of the choice we take the homotopy colimit over all cellular stratifications, i.e., we 
set $$\R\Gamma^T_{\pst}(X_{\ovk}):=\hocolim_{F_{\jcdot}X}\R\Gamma_{\pst}(X_{\ovk},F_{\jcdot}X)$$
 It is a complex in $\sd(\Ind-\Rep_{\pst}(G_K)) $ whose cohomology groups are in $\Rep_{\pst}(G_K)$ hence we can think of it as being in $\sd(\Rep_{\pst}(G_K))$.

   The complex  $\R\Gamma^T_{\pst}(X_{\ovk})$ computes the \'etale cohomology groups $H^*(X_{\ovk},\bq_p)$ as Galois modules. More precisely, we have the following proposition.
 \begin{proposition}( \cite[Prop. 2.1]{KK})
 \label{prop1}
 \begin{enumerate}
 \item Let $F_{\jcdot}X$ be a cellular stratification of $X$. There is a natural quasi-isomorphism 
 $$\kappa_{(X,F_{\jcdot}X)}:\quad \R\Gamma_{\pst}(X_{\ovk},F_{\jcdot}X)\simeq     \R\Gamma_{\eet}(X_{\ovk},\Q_p)$$
 that  is compatible with the action of $G_K$.
  \item
 Let $f: Y\to X$ be a map of affine schemes  and let $F_{\jcdot}Y$ be a cellular  stratification of $Y$ such that, for all $i$, $F_iY\subset F_iX$. Then 
 the following diagram commutes (in the dg derived category)
 $$
 \xymatrix{
 \R\Gamma_{\pst}(Y_{\ovk},F_{\jcdot}Y)\ar[r]^{\kappa_{(Y,F_{\jcdot}Y)}}_{\sim} &   \R\Gamma_{\eet}(Y_{\ovk},\Q_p)\\
 \R\Gamma_{\pst}(X_{\ovk},F_{\jcdot}X)\ar[u]^{f^*}\ar[r]^{\kappa_{(X,F_{\jcdot}X)}}_{\sim}  &    \R\Gamma_{\eet}(X_{\ovk},\Q_p)\ar[u]^{f^*}
  }
  $$
  \item  There exists a natural quasi-isomorphism $$\kappa_{X}:\quad \R\Gamma^T_{\pst}(X_{\ovk})\simeq    \R\Gamma_{\eet}(X_{\ovk},\Q_p)$$
   that is compatible with the action of $G_K$. 
 \end{enumerate}
 \end{proposition}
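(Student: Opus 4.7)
The plan is to obtain $\kappa_{(X, F_{\jcdot}X)}$ from the standard spectral sequence of a closed-subvariety filtration in étale cohomology, using the very-good pair hypothesis to force degeneration onto a single row. Concretely, the cellular filtration $\emptyset \subset F_0 X \subset \cdots \subset F_d X = X$ gives a filtered complex on $X_{\eet}$ whose $j$-th graded piece computes the relative cohomology $\R\Gamma_{\eet}(F_j X_{\ovk},F_{j-1}X_{\ovk},\Qp)$, and the associated spectral sequence
$$E_1^{p,q} = H^{p+q}(F_p X_{\ovk},F_{p-1}X_{\ovk},\Qp) \Rightarrow H^{p+q}(X_{\ovk},\Qp)$$
has $E_1^{p,q}=0$ for $q\neq 0$ by the very-good pair condition. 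It therefore degenerates at $E_2$ and identifies the cellular complex (which lives in the $q=0$ row) with $\R\Gamma_{\eet}(X_{\ovk},\Qp)$.

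To upgrade this observation to a morphism in the dg derived category I would fix once and for all a functorial $G_K$-equivariant model for $\R\Gamma_{\eet}$ (for instance a Godement or pro-étale resolution), form the associated filtered totalization $\mathrm{Tot}(X,F_{\jcdot}X)$, and apply the row-wise truncation $\tau_{\leq 0}$. The canonical arrow $\tau_{\leq 0}\mathrm{Tot}(X,F_{\jcdot}X)\to \mathrm{Tot}(X,F_{\jcdot}X)$ is a quasi-isomorphism because of the vanishing just observed, and the source is precisely the cellular complex $\R\Gamma_{\pst}(X_{\ovk},F_{\jcdot}X)$. Each relative cohomology group is potentially semistable by the $p$-adic comparison theorems, so this produces $\kappa_{(X,F_{\jcdot}X)}$ as a $G_K$-equivariant quasi-isomorphism in $\sd^b(\Rep_{\pst}(G_K))$, proving~(1). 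Part~(2) is then a functoriality check: a morphism $f:Y\to X$ with $f(F_iY)\subset F_iX$ induces a $G_K$-equivariant map of filtered totalizations that commutes with $\tau_{\leq 0}$, so the square relating $\kappa_{(X,F_{\jcdot}X)}$ and $\kappa_{(Y,F_{\jcdot}Y)}$ commutes in the dg derived category. For~(3) one passes to the homotopy colimit over the filtered system of cellular stratifications provided by Corollary~\ref{cor1}: all transition maps are quasi-isomorphisms by~(1) and~(2), so the filtered homotopy colimit is a quasi-isomorphism as well, yielding $\kappa_X$.

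The main obstacle I anticipate is making $\kappa_{(X,F_{\jcdot}X)}$ honest and strictly functorial at the dg level rather than merely well-defined up to homotopy; this is what forces the use of one fixed functorial resolution together with the filtered-totalization construction above, so that part~(2) becomes tautological. A secondary subtlety appears in~(3), where one needs the refinement category of cellular stratifications to be genuinely filtered with enough common refinements; this is exactly what parts~(1) and~(3) of Corollary~\ref{cor1} supply, after which the standard fact that a filtered homotopy colimit preserves quasi-isomorphisms concludes the argument.
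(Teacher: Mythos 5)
Your proposal is correct and follows essentially the same route as the paper: the authors likewise work with functorial Galois-equivariant \'etale complexes, totalize the relative complexes of the strata, and use column-wise truncations (justified by the good-pair vanishing, which is exactly your degenerate spectral sequence observation) to produce a $G_K$-equivariant quasi-isomorphism between the cellular complex and $\R\Gamma_{\eet}(X_{\ovk},\Q_p)$, with (2) following from functoriality of the fixed model and (3) from the homotopy colimit over stratifications. The only cosmetic imprecision is that the row-wise $\tau_{\leq 0}$ of a column is not literally the relative cohomology group but only projects quasi-isomorphically onto it, so $\kappa_{(X,F_{\jcdot}X)}$ is really given by a zig-zag of quasi-isomorphisms (hence a genuine morphism in the dg derived category), exactly as in the paper's three-row diagram.
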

 \begin{proof}
We have the following commutative diagram of Galois equivariant morphisms 
 $$\xymatrix@C=8pt{
 H^0(F_0X_{\ovk},\bq_p)\ar[d]^{\wr}\ar[r] & \cdots \ar[r] & H^k(F_kX_{\ovk},F_{k-1}X_{\ovk},\bq_p) \ar[r] \ar[d]^{\wr}
& \cdots  \ar[r] & H^d(X_{\ovk},F_{d-1}X_{\ovk},\bq_p) \ar[d]^{\wr} \\
 \R\Gamma_{\eet}(F_0X_{\ovk},\Q_p) \ar[r] \ar[d]& \cdots\ar[r] &  [\R \Gamma_{\eet}(F_kX_{\ovk},\Q_p)\to \R\Gamma_{\eet} (F_{k-1}X_{\ovk},\Q_p)][k] \ar[r]  \ar[d]
   &\cdots\ar[r] &  [\R\Gamma_{\eet} (X_{\ovk},\Q_p)\to \R\Gamma_{\eet}( {F_{d-1}X_{\ovk}},\Q_p)] [d]\ar[d]\\
0\ar[r] & \cdots\ar[r] & 0 \ar[r]  
  &\cdots\ar[r] & \R\Gamma_{\eet}( {X_{\ovk}},\Q_p) [d]
 }$$
 Here the square brackets denote homotopy limits. The first vertical maps are the truncations $\tau_{\leq d}\tau_{\geq d}$. 
 We obtain the map $\kappa_{(X,F_{\jcdot}X)}$ from the first statement of the proposition by taking homotopy limits of the rows of the diagram. Second  statement is  now clear. The third one  is an immediate corollary of Proposition \ref{prop1} and Corollary \ref{cor1}.
  \end{proof}
  \subsubsection{Potentially semistable complex of a variety}
  \label{pstB}
 For a general variety $X$ over $K$, one (Zariski) covers it with (rigidified) affine varieties defined over $K$, takes the associated \v{C}ech covering,
 and applies the above construction to each level of the covering \cite[D.5-D.10]{HMS}. Then, to make everything canonical, one goes to limit over such coverings. 

  Proposition \ref{prop1} implies now the following result \cite[Prop. D.3]{HMS}.
\begin{theorem}
Let $X$ be a variety over $K$.
There is a canonical complex $\R\Gamma^B_{\pst}(X_{\ovk})\in\sd^b(\Rep_{\pst})$ which represents the \'etale cohomology $\R\Gamma_{\eet}(X_{\ovk},\Q_p)$
 of 
$X_{\ovk}$ together with the action of $G_K$, i.e., there is a natural quasi-isomorphism $$\kappa_X:\quad \R\Gamma^B_{\pst}(X_{\ovk})\simeq \R\Gamma_{\eet}X_{\ovk},\Q_p),$$
that is compatible with the action of $G_K$.
\end{theorem}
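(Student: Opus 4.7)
The plan is to extend the affine construction $\R\Gamma^T_{\pst}(\cdot_{\ovk})$ of Proposition \ref{prop1} to arbitrary varieties by a \v{C}ech descent over affine Zariski hypercovers, and then to pass to a homotopy colimit over the filtered system of all such covers in order to render the construction canonical and functorial in $X$.

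First, I would fix $X$ and consider the category $\mathcal C_X$ of (rigidified) affine Zariski hypercovers $U_{\jcdot}\to X$, where each term $U_n$ is a finite disjoint union of affine open subvarieties; this category is filtered since any two such hypercovers admit a common refinement. For each $n$, Proposition \ref{prop1}(3) supplies a complex $\R\Gamma^T_{\pst}((U_n)_{\ovk})$ together with a $G_K$-equivariant quasi-isomorphism $\kappa_{U_n}$ to $\R\Gamma_{\eet}((U_n)_{\ovk},\Q_p)$. The functoriality statements of Proposition \ref{prop1}(2) and Corollary \ref{cor1}(3) imply that as $n$ varies these assemble into a cosimplicial object in $\sd(\Ind-\Rep_{\pst}(G_K))$ and the $\kappa_{U_n}$ into a cosimplicial quasi-isomorphism.

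Taking the totalization
$$
\R\Gamma^B_{\pst}(X_{\ovk},U_{\jcdot}):=\holim_n \R\Gamma^T_{\pst}((U_n)_{\ovk}),
$$
I obtain a complex whose cohomology groups, being iterated extensions of potentially semistable representations, still lie in $\Rep_{\pst}(G_K)$, so that it can be regarded in $\sd^b(\Rep_{\pst}(G_K))$. Cohomological descent of \'etale cohomology along Zariski hypercovers yields $\holim_n\R\Gamma_{\eet}((U_n)_{\ovk},\Q_p)\simeq \R\Gamma_{\eet}(X_{\ovk},\Q_p)$, so the levelwise $\kappa_{U_n}$ assemble into a $G_K$-equivariant quasi-isomorphism $\kappa_{(X,U_{\jcdot})}$. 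Setting
$$
\R\Gamma^B_{\pst}(X_{\ovk}):=\hocolim_{U_{\jcdot}\in\mathcal C_X}\R\Gamma^B_{\pst}(X_{\ovk},U_{\jcdot})
$$
then produces a canonical object: a refinement $V_{\jcdot}\to U_{\jcdot}$ induces a quasi-isomorphism on totalizations, so the filtered colimit is quasi-isomorphic to any representative. The quasi-isomorphisms $\kappa_{(X,U_{\jcdot})}$ are compatible with refinements and thus glue to the desired $\kappa_X$; functoriality in $X$ follows from Corollary \ref{cor1}(3) applied levelwise.

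The main obstacle will be homotopy coherence. Proposition \ref{prop1}(2) and Corollary \ref{cor1}(3) only describe compatibility for a single morphism of affines carrying specified cellular stratifications, so in order to produce a strict cosimplicial object on the \v{C}ech nerve one must first stabilize each level by taking the homotopy colimit over the filtered system of all cellular stratifications (that is, use the already canonical $\R\Gamma^T_{\pst}$ rather than any fixed $\R\Gamma_{\pst}(\cdot,F_{\jcdot}\cdot)$) and work throughout with Ind-objects in $\Rep_{\pst}(G_K)$. Once this bookkeeping is arranged, the verification that refinements induce quasi-isomorphisms and that the colimit over $\mathcal C_X$ is independent of a cofinal subsystem is a standard cohomological descent argument, carried out in detail in \cite[D.5--D.10]{HMS}.
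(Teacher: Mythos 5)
Your proposal is correct and follows essentially the same route as the paper, which likewise defines $\R\Gamma^B_{\pst}(X_{\ovk})$ by applying the affine construction of Proposition \ref{prop1} levelwise to the \v{C}ech covering associated to a (rigidified) affine Zariski cover and then passing to the limit over such coverings, deferring the bookkeeping to \cite[D.5--D.10, Prop. D.3]{HMS}. Your write-up in fact supplies more detail (the filteredness of the system of covers, the descent argument, and the observation that the cohomology of the totalization stays in $\Rep_{\pst}(G_K)$) than the paper itself records.
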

\subsection{Beilinson's $p$-adic absolute Hodge cohomology}
  Beilinson \cite{BS} uses the above construction of the potentially semistable  complexes  to define his syntomic complexes. 
\begin{definition}(\cite{BS})
Let $X$ be a variety  over $K$, $r\in\Z$. 
Set $\R\Gamma^B_{\pst}(X_{\ovk},\Qp(r)):=\R\Gamma^B_{\pst}(X_{\ovk})(r)$
and $$\R\Gamma^B_{\sh}(X,r)=\R\Gamma^B_{\synt}(X,r):=\Hom_{\sd^b(\Rep_{\pst}(G_K))}(\bq_p, \R\Gamma^B_{\pst}(X_{\ovk},\Qp(r))),\quad H^i_{\synt}(X,r):=H^i\R\Gamma^B_{\synt}(X,r). $$
\end{definition}
Immediately from this  definition we obtain that 
\begin{enumerate}
\item For $X=\Spec(K)$, we have $\R\Gamma^B_{\synt}(X,r)=\Hom_{\sd^b(\Rep_{\pst}(G_K))}(\bq_p, \bq_p(r)).$
\item There is a natural syntomic descent spectral sequence
\begin{equation}
\label{syntss}
{}^{\synt}E^{i,j}_2:=H^i_{\st}(G_K,H^j(X_{\ovk},\bq_p(r)))\Rightarrow H^{i+j}_{\synt}(X,r)
\end{equation}
\item We have a natural period map 
$$\rho_{\synt}^B:\quad \R\Gamma^B_{\synt}(X,r)\to \R\Gamma_{\eet}(X,\Q_p(r))
$$
defined as the composition
\begin{align*}
\R\Gamma^B_{\synt}(X,r) & =\Hom_{\sd^b(\Rep_{\pst}(G_K))}(\bq_p, \R\Gamma^B_{\pst}(X_{\ovk},\Qp(r)))\stackrel{}{\to}
\Hom_{\sd^b(\Spec(K)_{\proeet})}(\bq_p, \R\Gamma^B_{\pst}(X_{\ovk},\Qp(r)))\\
  & \stackrel{\kappa_X}{\to}
\Hom_{\sd^b(\Spec(K)_{\proeet})}(\bq_p, \R f^X_*\Q_p(r))=\R\Gamma_{\eet}(X,\Q_p(r))
\end{align*}
 It follows  that the syntomic descent spectral sequence 
is compatible with the  Hochschild-Serre spectral sequence via the map $\rho_{\synt}^B$.
\end{enumerate}

\subsection{Comparison of the two constructions of syntomic cohomology}
We will show now that the syntomic complexes defined in \cite{NN} and by Beilinson are naturally quasi-isomorphic. 
\begin{corollary}
\label{late1}
\begin{enumerate}
\item There is a canonical quasi-isomorphism in $\sd^b(\Rep_{\pst}(G_K))$
$$\R\Gamma_{\pst}(X_{\ovk},r)\stackrel{\sim}{\to}\R\Gamma^B_{\pst}(X_{\ovk},\Qp(r)).$$
\item There is a  canonical quasi-isomorphism 
$$
\rho_{\synt}^B: \R\Gamma_{\synt}^B(X,r)\simeq \R\Gamma_{\synt}(X,r),\quad r\in\Z.
$$
It is  compatible with period maps to \'etale cohomology and the  syntomic as well as the \'etale descent spectral sequences.
\end{enumerate}
\end{corollary}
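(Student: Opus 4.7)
The plan is to establish part (1) by constructing, at the level of $p$-adic Hodge complexes, a Beilinson-style cellular complex for each affine variety; applying $V_{\pst}$ will recover Beilinson's potentially semistable complex, while identifying its image under $\theta$ with $\R\Gamma_{pH}(X_{\ovk},r)$ will yield the desired comparison. Part (2) will then follow formally by applying the functor $\Hom_{\sd^b(\Rep_{\pst}(G_K))}(\Q_p,-)$.

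More concretely, for a very good pair $(X,Y,n)$ of affine $K$-varieties I would first show that the relative $p$-adic Hodge cohomology $\R\Gamma_{pH}((X,Y)_{\ovk},r)$ is concentrated in degree $n$: this should follow from the definition of a very good pair (relative \'etale cohomology concentrated in degree $n$) combined with the relative versions of the $p$-adic comparison theorems, which force the filtered $(\phi,N,G_K)$-module obtained from relative Hyodo-Kato and relative de Rham cohomology to be concentrated in the same degree. Hence for any cellular stratification $F_{\jcdot}X$ of an affine variety, one obtains a genuine complex in $C^b(DF_K)$ whose terms are the single-degree cohomology groups $H^j_{pH}((F_jX,F_{j-1}X)_{\ovk},r)$; I would then follow the recipe of \S\ref{pstB} verbatim, taking the homotopy colimit over cellular stratifications and extending to general $X$ via the \v Cech construction, to obtain a canonical object $\R\Gamma^B_{DF_K}(X_{\ovk},r) \in \sd^b(DF_K)$. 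By construction $V_{\pst}\R\Gamma^B_{DF_K}(X_{\ovk},r) = \R\Gamma^B_{\pst}(X_{\ovk},\Q_p(r))$, so it remains to produce a canonical quasi-isomorphism
$$
\theta(\R\Gamma^B_{DF_K}(X_{\ovk},r)) \simeq \R\Gamma_{pH}(X_{\ovk},r) \quad \text{in } \sd^{\ad}_{pH}.
$$
This reduces, via the cellular stratification and simplicial gluing, to the concentration-in-a-single-degree statement for very good pairs, each comparison being checked separately on the Hyodo-Kato, de Rham, and comparison structures. Applying $\theta^{-1}$ then $V_{\pst}$ and invoking Theorem \ref{thm1} yields the quasi-isomorphism of part (1).

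For part (2), applying $\Hom_{\sd^b(\Rep_{\pst}(G_K))}(\Q_p,-)$ to part (1) and using the expression of both $\R\Gamma_{\synt}(X,r)$ and $\R\Gamma^B_{\synt}(X,r)$ as such derived $\Hom$ groups gives the quasi-isomorphism $\R\Gamma^B_{\synt}(X,r) \simeq \R\Gamma_{\synt}(X,r)$. Compatibility with the \'etale period maps follows because in both constructions the period map factors through the canonical identification with $\R\Gamma_{\eet}(X_{\ovk},\Q_p(r))$ as Galois modules (via $\beta \circ r_{\eet}$ on one side, Proposition \ref{prop1}'s morphism $\kappa_X$ on the other); an auxiliary step is to verify that these two identifications agree when composed with the comparison of part (1), which one checks on very good pairs where both are induced by the same $p$-adic comparison isomorphism. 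Compatibility with the two syntomic descent spectral sequences is automatic, since they are the hypercohomology spectral sequences attached to a single derived $\Hom$ computation; compatibility with the Hochschild-Serre spectral sequence then follows from the compatibility of the period maps.

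The main obstacle is the relative statement that the $p$-adic Hodge cohomology of a very good pair is concentrated in a single degree and that the resulting cellular complex computes $\R\Gamma_{pH}(X_{\ovk},r)$ in $\sd^{\ad}_{pH}$: this requires a careful relative version of Beilinson's comparison theorems, along with a check that every structural piece -- Frobenius, monodromy, Hodge filtration, Galois action, and the Hyodo-Kato quasi-isomorphism $\iota_{\dr}$ -- is compatible with the cellular and \v Cech gluings. A secondary delicate point is the agreement of the two period maps, which must be tracked through the functor $r_{\eet}$ and reduced, again via the cellular construction, to the standard compatibility on very good pairs.
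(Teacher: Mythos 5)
Your proposal follows essentially the same route as the paper: build the cellular complex $\R\Gamma^B_{DF_K}(X_{\ovk},r)$ in $\sd^b(DF_K)$ using the fact that, for a good pair, the relative $p$-adic Hodge cohomology is concentrated in one degree and admissible by the comparison theorems, identify it with $\R\Gamma_{DF_K}(X_{\ovk},r)$ by the argument of Proposition \ref{prop1}, and then pass to $\R\Gamma^B_{\pst}$ via the Beilinson period maps, with part (2) following formally. The only imprecision is your claim that $V_{\pst}\R\Gamma^B_{DF_K}(X_{\ovk},r)=\R\Gamma^B_{\pst}(X_{\ovk},\Qp(r))$ ``by construction'': the terms of Beilinson's complex are the \'etale groups $H^j(X_{\ovk},Y_{\ovk},\Qp(r))$, so this identification is itself the period isomorphism $\rho_{\hk},\rho_{\dr}$ for good pairs (which you do invoke later when checking compatibility), exactly as in the paper.
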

\begin{proof}The second statement follows immediately from the first one. 
  To prove the first statement, consider the complex $\R\Gamma^B_{DF_K}(X_{\ovk},r)$ in $\sd^b(DF_K)$ defined as in Proposition \ref{prop1} but starting with  $ \R\Gamma_{pH}(X_{\ovk},r)$ instead of $\R\Gamma_{\eet}(X_{\ovk},\Qp(r))$. This is possible since,  for a good pair $(X,Y,j)$, we have $$\R\Gamma_{pH}(X_{\ovk},Y_{\ovk},r)\simeq (H^j_{\hk}(X_{\ovk},Y_{\ovk},r),(H^j_{\dr}(X,Y),F^{\jcdot+r}),
 H^j_{\hk}(X_{\ovk},Y_{\ovk})\lomapr{\iota_{\dr}} H^j_{\dr}(X_{\ovk},Y_{\ovk})),$$
 and, by $p$-adic comparison theorems, this is an element of $DF_K$. Proceeding as in the proof of Proposition \ref{prop1},
 we get  a functorial quasi-isomorphism in $\sd^b(DF_K)$:  
$$\kappa_X: \quad \R\Gamma^B_{DF_K}(X_{\ovk},r)\simeq \R\Gamma_{DF_K}(X_{\ovk},r).
$$ 
  
  For good pairs 
$(X,Y,j)$, the Beilinson period maps $\rho_{\hk},\rho_{\dr}$ \cite[3.6]{BE00}, \cite[3.2]{BE1} induce 
 the period isomorphism
$ V_{\pst}\R\Gamma_{pH}(X_{\ovk},Y_{\ovk},r)\stackrel{\sim}{\to} H^j(X_{\ovk},Y_{\ovk},\Qp(r))$. This period map
lifts to a period map $$V_{\pst}\R\Gamma^B_{DF_K}(X_{\ovk},r)\stackrel{\sim}{\to} \R\Gamma^B_{\pst}(X_{\ovk},\Qp(r)). $$
We define the map $ \R\Gamma_{\pst}(X_{\ovk},r)\stackrel{\sim}{\to}\R\Gamma^B_{\pst}(X_{\ovk},\Qp(r)) $ as the following composition
  \begin{align*}
 \R\Gamma_{\pst}(X_{\ovk},r) & \stackrel{\kappa_X^{-1}}{\to }V_{\pst}\R\Gamma^B_{DF_K}(X_{\ovk},r)\simeq \R\Gamma^B_{\pst}(X_{\ovk},\Qp(r)). 
  \end{align*}
\end{proof}
\subsection{The  Bloch-Kato exponential and the syntomic descent spectral sequence}
Let $V$ be a potentially semistable representation. Let $D=D_{\pst}(V)\in DF_K$.
The  Bloch-Kato exponential $$\exp_{\bk}: D_K/F^0\to H^1(G_K,V)$$ is defined as the composition \cite[2.4]{NN}
$$
D_K/F^0\to C(G_K,C_{\pst}(D)[1]) \to C(G_K,C(D)[1]) \stackrel{\sim}{\leftarrow} C(G_K,V[1]),
$$
where $C(G_K,\cdot)$ denotes the continuous cochains cohomology of $G_K$.
The complexes $C_{\pst}(D)$, $C(D)$ are defined as follows
\begin{align*}
C_{\pst}(D): & \quad D_{\st}\verylomapr{(N,1-\phi,\iota)}D_{\st}\oplus D_{\st}\oplus D_K/F^0\verylomapr{(1-p\phi)-N}D_{\st},\\
 C(D):\quad & D\otimes_{K_0^{\nr}}\B_{\st}\verylomapr{(N,1-\phi,\iota)}D\otimes_{K_0^{\nr}}\B_{\st}\oplus D\otimes_{K_0^{\nr}}\B_{\st}\oplus (D_{\ovk}\otimes_{\ovk}\B_{\dr})/F^0\verylomapr{(1-p\phi)-N}D\otimes_{K_0^{\nr}}\B_{\st} \end{align*}
We have $C_{\pst}(D)=C(D)^{G_K}$.

  The following compatibility result is used in the study of special values of  L-functions. Its $f$-analog was proved in \cite[Theorem 5.2]{N2}\footnote{There the exponential $\exp_{\st}$ is called $l$.}.
\begin{proposition}Let $i\geq 0$. The composition
\begin{align*}
H^{i-1}_{\dr}(X)/F^r \stackrel{\partial}{\to} H^i_{\synt}(X_h,r)\stackrel{\rho_{\synt}}{\longrightarrow} H^i_{\eet}(X,\Qp(r))\to H^i_{\eet}(X_{\ovk},\Qp(r))
\end{align*}
is the zero map. The induced (from the syntomic descent spectral sequnce) map 
$$
H^{i-1}_{\dr}(X)/F^{r}\to H^1(G_K,H^{i-1}_{\eet}(X_{\ovk},\Qp(r)))
$$
 is equal to the Bloch-Kato exponential associated with the Galois representation $H^{i-1}_{\eet}(X_{\ovk},\Qp(r))$.
\end{proposition}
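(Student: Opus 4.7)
The plan is to work with the identifications developed earlier. By Theorem~\ref{compsynpH}, Theorem~\ref{thm1}, and Fontaine's equivalence, we identify
$$\R\Gamma_{\synt}(X_h,r) \simeq \R\Hom_{\sd^b(\Rep_{\pst}(G_K))}(\Qp, \R\Gamma_{\pst}(X_{\ovk}, r)),$$
and by Lemma~\ref{pst=et}, $\R\Gamma_{\pst}(X_{\ovk}, r) \simeq \R\Gamma_{\eet}(X_{\ovk}, \Qp(r))$ as complexes with $G_K$-action. Under these identifications the syntomic descent spectral sequence becomes the Postnikov spectral sequence for this $\R\Hom$, with filtration $F^{\bullet}H^n_{\synt}$ satisfying $F^p/F^{p+1} = E^{p,n-p}_{\infty}$, and the period map $\rho_{\synt}$ is a morphism of spectral sequences to Hochschild-Serre preserving the filtration degree.

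For the first assertion, I observe that $H^i_{\eet}(X,\Qp(r)) \to H^i_{\eet}(X_{\ovk},\Qp(r))$ factors through the Hochschild-Serre edge map, whose kernel is exactly $F^1 H^i_{\eet}(X,\Qp(r))$. By compatibility of $\rho_{\synt}$ with filtrations, it suffices to show $\partial\omega \in F^1 H^i_{\synt}(X,r)$, i.e.\ that its image under the edge map $H^i_{\synt}(X,r) \to E^{0,i}_\infty \subseteq H^0_{\st}(G_K, H^i_{\eet}(X_{\ovk},\Qp(r)))$ vanishes. Unwinding the formula of Example~\ref{ex1}, the boundary $\partial\omega$ is represented by the cocycle $(0,0,-\omega)$ in the cone $\Cone(M^{\sharp}_0 \oplus F^r M_K \to F_{\dr}(M_K)^{G_K})[-1]$, whose only non-zero entry is the degree-$(i-1)$ homotopy component; the projection of such a cocycle to the ``top-degree'' Postnikov quotient $\Hom(K(0), H^i\R\Gamma_{pH}(X_{\ovk}, r))$ is evidently zero, yielding the desired vanishing.

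For the second assertion, I identify the induced graded map
$$\bar\partial: H^{i-1}_{\dr}(X)/F^r \to F^1/F^2 H^i_{\synt}(X,r) = E^{1,i-1}_{\infty} \hookrightarrow H^1_{\st}(G_K, H^{i-1}_{\eet}(X_{\ovk},\Qp(r)))$$
with the Bloch-Kato exponential. After passing to the Postnikov truncation in cohomological degree $i-1$, the relevant $\Ext^1$ group is computed by the complex $C_{\pst}(D)$ for $D := D_{\pst}(H^{i-1}_{\eet}(X_{\ovk},\Qp(r)))$. By construction, $\exp_{\bk}$ is induced by the tautological inclusion $D_K/F^0 \hookrightarrow C_{\pst}(D)^1$ as the third summand of the degree-$1$ term. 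Applying Example~\ref{ex1} to $H^{i-1}_{pH}(X_{\ovk},r)[-(i-1)]$, together with the Beilinson period isomorphisms $\rho_{\hk}$ and $\rho_{\dr}$, one obtains a chain-level identification of the ``homotopy component'' of the cone with the $D_K/F^0$ summand of $C_{\pst}(D)$. Under this identification $\bar\partial$ becomes the very same tautological inclusion, hence equals $\exp_{\bk}$. This parallels the argument for the good-reduction case in \cite[Theorem~5.2]{N2}.

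The main technical obstacle is the chain-level matching in the third paragraph: one must verify that the $F^r\R\Gamma_{\dr}(X) \to \R\Gamma_{\dr}(X)$ part of the cone defining $\R\Gamma_{\synt}(X_h,r)$ corresponds, through $\rho_{\hk}$ and $\rho_{\dr}$, to the $D_K/F^0 \to D_{\st}$ part of $C_{\pst}(D)$, with the Frobenius twist by $p^r$ and the monodromy operator propagated correctly. Once this matching is made precise, the remainder is a formal diagram chase.
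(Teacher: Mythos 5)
Your overall strategy coincides with the paper's: reduce to the filtration of the hyperext spectral sequence, show that $\partial\omega$ lies in $F^1$ because its cocycle representative $(0,0,-\omega)$ has vanishing image in the $E^{0,i}_\infty$-quotient, and then identify the projection to $E^{1,i-1}$ with the tautological inclusion $D_K/F^0\hookrightarrow C_{\pst}(D)^1$ that defines $\exp_{\bk}$. The first assertion is handled correctly. But the step you yourself flag as ``the main technical obstacle'' is precisely the content of the proposition, and your proposal does not supply the idea needed to carry it out. Applying Example \ref{ex1} to the single truncated object $H^{i-1}_{pH}(X_{\ovk},r)[-(i-1)]$ is not sufficient: the subquotient $F^1/F^2H^i_{\synt}$ and the map $\delta_1$ into $\Ext^1(\Qp,H^{i-1})$ are computed from the differentials of the \emph{whole} complex (witness the correction term $b-d_0a'$ in the paper's explicit formula for $\delta_1$), not from one cohomology object in isolation. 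So one needs a functorial representative of $\R\Gamma_{pH}(X_{\ovk},r)$ by an honest complex $D\kr$ of objects of $DF_K$ on which the boundary $\partial$, the edge map $(a,b,c)\mapsto a$, and $\delta_1$ can all be written termwise.

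The paper fills this gap by invoking Beilinson's cellular complex $\R\Gamma^B_{DF_K}(X_{\ovk},r)$ from Corollary \ref{late1} --- a genuine complex of admissible filtered $(\phi,N,G_K)$-modules built from cellular stratifications --- together with the compatibility of the comparison map $\kappa_X$ with the boundary maps from the de Rham complexes. With $D\kr:=\R\Gamma^B_{DF_K}(X_{\ovk},r)$ in hand, $\R\Hom(K(0),D\kr)$ is the total complex of the explicit double complex $C_{\pst}(D\kr)$, and the four ``easy to check'' facts at the end of the paper's proof (in particular that the composite $H^{i-1}_{\dr}(X)/F^r\to H^i\widetilde{\R}\Gamma^B_{\synt}(X,r)_0\xrightarrow{\delta_1}H^1(C_{\pst}(H^{i-1}_{\dr}(X,r)))$ is induced by $b\mapsto(0,0,b)\mapsto b$) complete the identification with $\exp_{\bk}$. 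Your proof is incomplete until you either reproduce this device or provide another strict complex-level model of $\R\Gamma_{DF_K}(X_{\ovk},r)$ in $C^b(DF_K)$ on which the same termwise computation can be performed.
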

\begin{proof}By the compatibility of the syntomic descent spectral sequence and the Hochschild-Serre spectral sequence \cite[Theorem 4.8]{NN}, we have the commutative diagram
$$
\xymatrix{
 H^i\R\Gamma_{\synt}(X_h,r)_0\ar[r]^{\rho_{\synt}}\ar[d]^{\delta_1} & H^i_{\eet}(X,\Qp(r))_0\ar[d]^{\delta_1}\\
H^1_{\st}(G_K,H^{i-1}_{\eet}(X_{\ovk},\Qp(r)))\ar[r]^{\can} & H^1(G_K,H^{i-1}_{\eet}(X_{\ovk},\Qp(r))),
}
$$
where 
\begin{align*}
H^i\R\Gamma_{\synt}(X_h,j)_0 &:= \ker(H^i\R\Gamma_{\synt}(X_h,r)\to H^0_{\st}(G_K,H^{i}_{\eet}(X_{\ovk},\Qp(r)))),\\
  H^i_{\eet}(X,\Qp(r))_0 & :=\ker(H^i_{\eet}(X,\Qp(r))\to H^i_{\eet}(X_{\ovk},\Qp(r))).
\end{align*}
It suffices thus to show that the dotted arrow in the following diagram
$$
\xymatrix{
 & H^i\R\Gamma_{\synt}(X,r)_0\ar[d]^{\delta_1}\\
H^{i-1}_{\dr}(X)/F^r \ar@{.>}[ru]^{\partial} \ar[r]& H^1_{\st}(G_K,H^{i-1}_{\eet}(X_{\ovk},\Qp(r))))
}
$$
exists and that this diagram commutes.

   To do that, we will use freely the notation from the proof of Corollary \ref{late1}. Set $$\widetilde{\R}\Gamma^{B}_{\synt}(X,r)=\Hom_{\sd^b(DF_K)}(K(0), \R\Gamma^B_{DF_K}(X_{\ovk},r))=H^i\holim C_{\pst}(\R\Gamma^B_{DF_K}(X_{\ovk},r)).$$
Arguing as in the proof of Proposition \ref{prop1}, we get the following commutative diagram
$$
\xymatrix{
H^i\widetilde{\R}\Gamma^B_{\synt}(X,r)_0\ar[d]^{\delta_1}\ar[r]_{\sim}^{\kappa_X} & H^i\R\Gamma_{\synt}(X_h,r)_0\ar[d]^{\delta_1}\\
H^1(C_{\pst}(H^{i-1}_{\dr}(X,r)))\ar[d]_{\wr}^{(\rho_{\hk},\rho_{\dr})}\ar@{=}[r] & H^1(C_{\pst}(H^{i-1}_{\dr}(X,r)))\ar[d]_{\wr}^{(\rho_{\hk},\rho_{\dr})}\\
H^1_{\st}(G_K,H^{i-1}_{\eet}(X_{\ovk},\Qp(r)))\ar@{=}[r] & H^1_{\st}(G_K,H^{i-1}_{\eet}(X_{\ovk},\Qp(r)))
}
$$
Moreover the comparison map $\kappa_X$ is compatible with the boundary maps $\partial$ from the de Rham cohomology complexes $\R\Gamma_{\dr}(X)$ and $\R\Gamma^B_{\dr}(X)$. It suffices thus to show that the dotted arrow in the following diagram
$$
\xymatrix{
 & H^i\widetilde{\R}\Gamma_{\synt}(X,r)_0\ar[d]^{\delta_1}\\
H^{i-1}_{\dr}(X)/F^r \ar@{.>}[ru]^{\partial} \ar[r]& H^1(C_{\pst}(H^{i-1}_{\dr}(X,r)))
}
$$
exists and that this diagram commutes.

   Let 
$$\R\Gamma^B_{DF_K}(X_{\ovk},r)=D\kr=D^0\lomapr{d^0} D^1\lomapr{d^1} D^2\lomapr{d^2} \cdots 
$$
Then $\holim C_{\pst}(\R\Gamma^B_{DF_K}(X_{\ovk},r))$ is the total complex of the double complex below.
$$
\xymatrix@C=40pt{\cdots & \cdots \ar[r]& \cdots \ar[r]&\cdots\\
C_{\pst}(D^2)\ar[u]^{d^2}: & D^2_{\st}\ar[r]^-{(N,1-\phi,\iota)}\ar[u]^{d^2} & D^2_{\st}\oplus D^2_{\st}\oplus D^2_K/F^0\ar[r]^-{(1-p\phi)-N}\ar[u]^{d^2} & D^2_{\st} \ar[u]^{d^2}\\
C_{\pst}(D^1)\ar[u]^{d^1}: & D^1_{\st}\ar[r]^-{(N,1-\phi,\iota)}\ar[u]^{d^1} & D^1_{\st}\oplus D^1_{\st}\oplus D^1_K/F^0\ar[r]^-{(1-p\phi)-N}\ar[u]^{d^1} & D^1_{\st} \ar[u]^{d^1}\\
C_{\pst}(D^0)\ar[u]^{d^0}: & D^0_{\st}\ar[r]^-{(N,1-\phi,\iota)}\ar[u]^{d^0} & D^0_{\st}\oplus D^0_{\st}\oplus D^0_K/F^0\ar[r]^-{(1-p\phi)-N}\ar[u]^{d^0} & D^0_{\st} \ar[u]^{d^0}
}
$$
We note that $D\kr_{\st}=\R\Gamma_{\hk}^B(X,r)$, $D_K\kr=\R\Gamma_{\dr}^B(X)$. 
   The following facts are easy to check.
\begin{enumerate}
\item The map $\partial: \R\Gamma^B_{\dr}(X)/F^r\to \widetilde{\R}\Gamma^B_{\synt}(X,r)[1]$ is given by the canonical morphism
$$
D\kr_K/F^0\to [D\kr_{\st}\lomapr{}D\kr_{\st}\oplus D_{\st}\kr\oplus D_{K}\kr/F^0\lomapr{}D\kr_{\st}][1]
$$Similarly, the map $H^{i-1}_{\dr}(X)/F^r\to H^1(C_{\pst}(H^{i-1}_{\dr}(X,r)))$ is given by the canonical morphism
$$
H^{i-1}_{\dr}(X)/F^r\to[H^{i-1}_{\hk}(X,r)\to H^{i-1}_{\hk}(X,r)\oplus H^{i-1}_{\hk}(X,r)\oplus H^{i-1}_{\dr}(X,r)/F^0\to H^{i-1}_{\hk}(X,r)][1].
$$
\item  The map $H^i\widetilde{\R}\Gamma^B_{\synt}(X,r)\to H^0(C_{\pst}(H^i_{\dr}(X,r)))$ is induced by $(a,b,c)\mapsto a$.
\item The map $\delta_1: H^i\widetilde{\R}\Gamma^B_{\synt}(X,r)_0\to H^1(C_{\pst}(H^{i-1}_{\dr}(X,r)))$ is induced by $(a,b,c)\mapsto b-d_0a^{\prime}$, where $a^{\prime}$ is such that $d^{i}a^{\prime}=a$.
\item As a corollary of the above, we get that the composition
$$
H^{i-1}_{\dr}(X)/F^r\to H^i\widetilde{\R}\Gamma^B_{\synt}(X,r)_0\stackrel{\delta_1}{\to} H^1(C_{\pst}(H^{i-1}_{\dr}(X,r)))
$$
is induced by the map $b\mapsto (0,0,b)\mapsto b$.
\end{enumerate}
This proves our proposition.
\end{proof}

\section{$p$-adic realizations of motives}
\subsection{$p$-adic realizatons of Nori's motives} We start with  a quick review of Nori's motives. We follow \cite{HMS},  \cite{ML}, \cite{AB}, and \cite[2]{DA}.

  Take an embedding $K\hookrightarrow {\mathbf C}$ and a field $F\supset \Q$. A diagram $\Delta$ is a directed graph. A representation
$T: \Delta\to V_{F}$  assigns to every vertex in
$\Delta$ an object in
$V_{F}$ and to every edge
$e$ from
$v$ to
$v^{\prime}
$ a homomorphism
$T(e) :T(v)\to T(v^{\prime}).$ Let $\scc(\Delta, T) $ be its associated diagram category \cite[Thm 41]{ML}, \cite[2.1]{DA}:  the category of finite dimentional right $\End^{\vee}(T)$-comodules. It is the universal abelian
category together with  a unique representation $\wt{T}: \Delta\to \scc(\Delta,T)$ and a  faithful, exact, $F$-linear functor $ T: \scc(\Delta, T)\to V_{F}$  
extending  the original representation $T$. If $\Delta$ is an abelian category then we have an equivalence $\Delta\simeq \scc(\Delta,T)$.

   More specifically we have the following result of Nori.
\begin{proposition}
\label{Nori1}(Nori,  \cite[Cor. 2.2.10]{DA}, \cite[Cor. 2.2.11]{DA})
\begin{enumerate}
\item Let $\srr$ be an $F$-linear abelian category with a faithful exact functor $\rho:\srr\to V_F$. Assume  that the representation $T:\Delta\to V_F$ factors, up to natural equivalence,  as $T_1\rho$. Let  $\sa$ be  an $F$-linear abelian category equipped with a faithful exact functor $U:\sa\to\srr$. If $G:\Delta\to\sa$ is a morphism of directed graphs such that $T_1$ is equivalent to $UG$, then there exist functors $\scc(\Delta,T)\to \srr,$ $\wt{G}:\scc(\Delta,T)\to\sa$ such that the following diagram
$$
\xymatrix@C=40pt{
\Delta\ar[r]^G \ar[d]^{\wt{T}}\ar[dr]_{ T_1}&\sa\ar[d]^U\\
\scc(\Delta,T)\ar@{-->}[r] \ar[rd]^T\ar@{-->}[ur]^-{\wt{G}}&\srr\ar[d]^{\rho}\\
& V_F
}
$$
commutes up to natural equivalence. 
 \item 
For a commutative (up to natural equivalence)  diagram
$$
\xymatrix{
\Delta\ar[dd]^{\pi}\ar[r] ^{G}\ar[rrd]_-T & \sa\ar[dd]\ar[rd]^U\\
& & V_F\\
\Delta^{\prime}\ar[rru]_{T^{\prime}}\ar[r]_{G^{\prime}} & \sa^{\prime}\ar[ur]_{U^{\prime}}
}
$$
we have a commutative (up to natural equivalence) diagram 
$$
\xymatrix@C=50pt{
\scc(\Delta,T)\ar[d]^{\pi} \ar[r]^-{\wt{G}} & \sa\ar[d]\\
\scc(\Delta^{\prime},T^{\prime})\ar[r]^-{\wt{G}^{\prime}} & \sa^{\prime}}
$$
\end{enumerate}
\end{proposition}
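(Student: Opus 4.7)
The plan is to reduce everything to the basic universal property of the diagram category: for any $F$-linear abelian category $\sbb$ equipped with a faithful exact $F$-linear functor $\omega:\sbb\to V_F$ and a representation $S:\Delta\to\sbb$ satisfying $\omega S\simeq T$, there exists an essentially unique faithful exact $F$-linear functor $\widetilde{S}:\scc(\Delta,T)\to\sbb$ such that $\widetilde{S}\widetilde{T}\simeq S$ and $\omega\widetilde{S}\simeq T$. This universal property is built into the comodule description $\scc(\Delta,T)\simeq \mathrm{comod}\textrm{-}\End^{\vee}(T)$, since $\omega$ and $S$ together produce a morphism of coalgebras $\End^{\vee}(T)\to\End^{\vee}(\omega)$ and hence a functor on comodules.

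For part (1), first observe that the composite $\rho U:\sa\to V_F$ is faithful exact and $F$-linear, since both factors are. Combining the hypotheses $T\simeq T_1\rho$ and $T_1\simeq UG$, one has a natural equivalence $T\simeq \rho U G = (\rho U)G$, so the triple $(\sa,\rho U,G)$ satisfies the input condition of the basic universal property recalled above. This produces the desired $\widetilde{G}:\scc(\Delta,T)\to\sa$ with $\widetilde{G}\widetilde{T}\simeq G$ and $\rho U\widetilde{G}\simeq T$. Applying the same universal property to the triple $(\srr,\rho,T_1)$ yields a functor $\scc(\Delta,T)\to\srr$, and the essential uniqueness clause forces this functor to be equivalent to $U\widetilde{G}$, giving the commutativity displayed in the statement.

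For part (2), one applies part (1) on both sides: the triples $(\sa,U,G)$ over $\Delta$ and $(\sa',U',G')$ over $\Delta'$ produce $\widetilde{G}$ and $\widetilde{G'}$ respectively. To compare $\widetilde{G'}\circ\pi$ with (the canonical composition through $\sa$ followed by $\sa\to\sa'$), note that both are faithful exact $F$-linear functors $\scc(\Delta,T)\to\sa'$ extending $G'\pi\simeq G$ composed with $\sa\to\sa'$, and both are compatible with the fiber functors to $V_F$. The essential uniqueness clause of the basic universal property then forces the required natural equivalence, so the functoriality diagram commutes up to natural equivalence.

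The main obstacle, and the content around which this whole plan revolves, is establishing the basic universal property in the precise form needed, with both existence and uniqueness up to natural equivalence. This is the Tannakian-style reconstruction theorem expressing $\scc(\Delta,T)$ as finite-dimensional comodules over the coendomorphism coalgebra $\End^{\vee}(T)=\varinjlim_{F\subset\Delta}\End(T|_F)^{\vee}$, where the limit runs over finite subdiagrams; it is proved in \cite[Thm 41]{ML} and \cite[\S 2.1]{DA}. Once this reconstruction is granted, both parts of Proposition~\ref{Nori1} follow by the formal arguments above.
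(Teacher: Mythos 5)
The paper does not actually prove this proposition---it is quoted from Nori via Arapura (\cite[Cor.~2.2.10, 2.2.11]{DA}) with no argument given---so there is no in-paper proof to compare against. Your reduction of both parts to the existence-and-uniqueness form of the universal property of $\scc(\Delta,T)$ (Nori's reconstruction via $\End^{\vee}(T)$-comodules, \cite[Thm~41]{ML}) is precisely the standard argument in those references, and it is correct: part (1) follows by applying the universal property to $(\sa,\rho U,G)$ and $(\srr,\rho,T_1)$ and invoking uniqueness, and part (2) by comparing the two composites $\scc(\Delta,T)\to\sa'$. The only step left implicit is that the comparison functor $\sa\to\sa'$ in part (2) is itself faithful and exact (so that both composites fall within the scope of the uniqueness clause); this follows because its composite with $U'$ is the faithful exact functor $U$, and $U'$, being faithful exact, reflects exactness.
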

\begin{example}
The following diagrams appear in the construction of Nori's motives.
\begin{enumerate}
 \item The diagram
$\Delta^{\eff}$ of effective pairs consists of pairs $(X, Y, i)$ and 
two types of edges:
\begin{enumerate}
\item  (functoriality) for every morphism $f: X\to X^{\prime}$, with $f(Y)\subset Y^{\prime}$, 
an
edge $f^*:(X^{\prime},Y^{\prime},i)\to (X,Y,i)$.
\item
 (coboundary) for every chain $X\supset Y\supset Z$ of closed $K$-subvarieties of $X$, an edge
 $\partial: (Y,Z,i)\to (X,Y,i+1)$. 
\end{enumerate}
\item  The diagram $\Delta^{\eff}_ {g}$ (resp. ${\Delta}^{\eff}_{vg}$) 
of effective good (resp. of
effective very good)  pairs is  the full subdiagram of $\Delta^{\eff}$
with vertices good   (resp. very good) pairs $(X, Y, i) $. 
\item 
The diagrams $\Delta$ of pairs, $\Delta_{g}$ of good pairs, and ${\Delta}_{vg}$
of very good pairs are obtained by localization with respect to the pair $({\mathbb G}_m,\{1\},1)$ \cite[B.18]{HMS}.
\end{enumerate}
\end{example} 

Let  $H^*:\Delta_{g}\to V_{F}$  be the representation which assigns to $(X, Y, i) $ the relative
singular cohomology
$H^i(X({\mathbf C}), Y({\mathbf C}),F).$
\begin{definition}\label{df:Nori_motives}
The category of (reps. effective) Nori motives
$\MM(K)_F$ (resp.
$\MM(K)_F$) is defined as the diagram category
$\scc(\Delta_{g}, H^*) $ (resp. $\scc(\Delta^{\eff}_{g}, H^*))$.
For a good pair $(X,Y,i)$,  we denote by $\Hm^i(X,Y)$ the object of $\MMe(K)_F$ (resp. $\MM(K)_F$)
 corresponding to it and we define
  the Tate object as
$${\un}(-1): =\Hm^1({\mathbb G}_{m,K},\{1\})\in \MMe(K)_F,
 \quad {\un}(-n):={\un}(-1)^{\otimes n}.
$$
\end{definition}
We have \cite[Thm 1.6, Cor. 1.7]{HMS}
\begin{itemize}
\item $\MMe(K)_F\simeq \MMe(K)_\Q\otimes_{\Q}F$ and 
$ \MM(K)_F\simeq \MM(K)_\Q\otimes_{\Q}F$.
\item As an abelian category $\MMe(K)_F$ is generated by Nori motives
 of the form $\Hm^i(X,Y)$ for  good pairs $(X,Y,i)$; every  object of $\MMe(K)_F$ is a subquotient of a finite direct sum of objects of the form $\Hm^i(X,Y)$.
\item $\MMe(K)_F\subset \MM(K)_F$ are commutative tensor categories.
\item $\MM(K)_F$ is obtained from $\MMe(K)_F$ by $\otimes$-inverting 
 ${\un}(-1)$.
\item The diagram categories of $\Delta^{\eff}$
and of ${\Delta}^{\eff}_{vg}$ with respect to singular
cohomology with coefficients in $F$ are equivalent to $\MMe(K)_F$
as abelian categories. The diagram categories
of $\Delta$ and of ${\Delta}_{vg}$ 
are equivalent to $\MM(K)_F$.\footnote{This is shown by an argument analogous to the one we have used in the construction of Beilinson's potentially semistable complex of a variety in Section \ref{pstB} : via cellular complexes and \v{C}ech coverings one lifts the representation $H^*$ from very good pairs to all pairs to a representation that canonically computes relative singular cohomology.} In particular, any pair $(X,Y,i)$
 defines a Nori motive $\Hm^i(X,Y)$.
\item Nori shows that these categories are independent of the embedding $ K\hookrightarrow {\mathbf C}$. 
\end{itemize}

  From the universal property of the  category $\MMe(K)_F$  it is easy to construct realizations. We will describe the ones coming from $p$-adic Hodge Theory.
  \begin{construction}\label{constr1}(Galois realization)
  Consider the map $\Delta^{\eff}\to \Rep(G_K)$: 
  $$(X,Y,i)\mapsto H^i(X_{\ovk},Y_{\ovk},\Qp).
 $$
 We have $H^i(X_{\ovk},Y_{\ovk},\Qp)\simeq H^i(X({\mathbf C}),Y({\mathbf C}),\Qp)$. Thus, by Theorem \ref{Nori1}, we obtain an extension which is the exact \'etale realization functor
 $$
 \R_{\eet}: \quad \MMe(K)_{\Qp}\to \Rep(G_K). 
 $$
 Note that $\R_{\eet}({\un}(-1))=H^1({\mathbb G}_{m,\ovk},\{1\},\Qp)=\Qp(-1)$. Hence the functor $\R_{\eet}$ lifts to $\MM(K)_{\Qp}$. 
 
  In analogous way we obtain the exact potentially semistable realization 
  $$
   \R_{\pst}: \quad \MM(K)_{\Qp}\to \Rep_{\pst}(G_K). 
  $$
  It factors $\R_{\eet}$ via the natural functor $\Rep_{\pst}(G_K)\to \Rep(G_K)$. 
  \end{construction}
  \begin{construction}\label{constr11}(Filtered $(\phi,N,G_K)$  realization)
  Consider the map $\Delta^{\eff}\to DF_K$: 
  $$(X,Y,i)\mapsto H^i_{DF}(X,Y):=(H^i_{\hk}(X_{\ovk},Y_{\ovk}),(H^i_{\dr}(X,Y),F^{\jcdot}),
  \iota_{\dr}:H^i_{\hk}(X_{\ovk},Y_{\ovk})\otimes_{K_0^{\nr}}\ovk\stackrel{\sim}{\to}H^i_{\dr}(X_{\ovk},Y_{\ovk})).
 $$
 By $p$-adic comparison theorems, we have $$D_{\pst}(H^i_{DF}(X,Y))\simeq H^i(X_{\ovk},Y_{\ovk},\Qp)\simeq H^i(X({\mathbf C}),Y({\mathbf C}),\Qp).
 $$Thus, by Theorem \ref{Nori1}, we obtain an extension which is the exact  filtered $(\phi,N,G_K)$ realization functor
 $$
 \R_{DF_K}: \quad \MMe(K)_{\Qp}\to DF_K. 
 $$
 Since  $\R_{DF}({\un}(-1))=K(-1)$, the functor $\R_{DF}$ lifts to $\MM(K)_{\Qp}$. 
 
  Projections yield faithful exact functors from $DF_K $ to the categories $M_K({\phi},N,G_K)$ and $V_{\dr}^K$.
  Composing them with the realization $\R_{DF}$ we get
  \begin{itemize}
  \item the exact 
   Hyodo-Kato realization
  $$\R_{\hk}: \quad \MM(K)_{\Qp} \to  M_K(\phi,N,G_K),$$
  \item the exact 
  de Rham realization
  $$\R_{\dr}:\quad  \MM(K)_{\Qp}\to V_{\dr}^K.$$
  \end{itemize}
  Composing $\R_{DF_K}$ with the projection on the third factor  of the filtered $(\phi,N,G_K)$-module, we obtain the Hyodo-Kato natural equivalence
  \begin{equation}
  \label{hk2}
  \iota_{\dr}: \quad \R_{\hk}\otimes_{K^{\nr}_0}\ovk\simeq \R_{\dr}\otimes_{K}\ovk: \quad \MM(K)_{\Qp}\to V_{\ovk} ,
  \end{equation}
  where the tensor product is taken pointwise.
  \end{construction}
  \begin{construction}\label{constr2}(Realization of period isomorphism)
 To realize period isomorphisms, we define the category of realizations $\srr (K)$. An object of $\srr (K)$ is a tuple $M:=(M_{DF},M_{\pst}, \rho_{\pst})$ consisting of 
  $M_{DF}\in DF_K$,
  $M_{\pst}\in \Rep_{\pst}(G_K)$, and  a comparison isomorphism $\rho_{\pst}:V_{\pst}M\simeq M_{\pst}$ of Galois modules.
  It is a abelian category (it is naturally equivalent to the category $ \Rep_{\pst}(G_K)$).   Projections yield faithful exact functors from $\srr (K)$ to the categories $DF_{K}$ and $\Rep_{\pst}(G_K)$.
  
  Consider the following map $\Delta^{\eff} \to \srr (K)$:
  \begin{align*}
  (X,Y,i)\mapsto  (H^i_{DF}(X,Y), H^i(X_{\ovk},Y_{\ovk},\Qp),  \rho_{\pst}:  V_{\pst}H^i_{DF}(X,Y)\simeq H^i(X_{\ovk},Y_{\ovk},\Qp)).
  \end{align*}
  Since the functor $\srr(K)\to \Rep_{\pst}(G_K)\to V_{\Q_p}$ is faithful and exact, Theorem \ref{Nori1} gives us an extension $\MMe(K)_{\Qp} \to \srr (K)$ that is compatible with the \'etale realization.
  Since
  $$({\mathbb G}_m,\{1\},1)\mapsto (K(-1),\Qp(-1), V_{\pst}K(-1)\simeq \Qp(-1)),
  $$ again  by Theorem \ref{Nori1},
  we obtain the exact realization
  $$\R_{\srr}: \quad  \MM(K)_{\Qp} \to  \srr (K) . $$
Projecting on the first two factors we get back the realizations 
  $\R_{DF_K}$ and $\R_{\pst}$ and 
   projecting on the third factor  we get that the above two  realizations are related via a period morphism, i.e., we have 
   a natural equivalence
   $$\rho_{\pst}:\quad V_{\pst}\R_{DF_K}\simeq \R_{\pst}: \quad \MM(K)_{\Qp}\to \Rep_{\pst}(G_K).
   $$

   To sum up, we have a  potentially semistable comparison theorem for Nori's motives. 
  \begin{corollary}For $M\in \MM(K)_{\Qp}$, there is a functorial isomorphism
  $$\rho_{\pst}: \quad \R_{\hk}(M)\otimes_{K^{\nr}_0}\B_{\st}\simeq \R_{\eet}(M)\otimes_{\Qp}\B_{\st}
  $$
  that is compatible with Galois action, Frobenius, and the monodromy operator. Moreover, after passing to  $\B_{\dr}$ via the Hyodo-Kato map (\ref{hk2}), 
  it yields a functorial isomorphism
  $$\rho_{\dr}: \quad \R_{\dr}(M)\otimes_{K}\B_{\dr}\simeq \R_{\eet}(M)\otimes_{\Qp}\B_{\dr}  $$
  that is compatible with filtration.
  \end{corollary}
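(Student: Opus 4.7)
The plan is to deduce the corollary directly from the realization $\R_{\srr}$ constructed just above it, combined with the abstract period isomorphisms (\ref{abstract}) for individual potentially semistable representations. That is, the hard motivic input (extending the realization from good pairs to all Nori motives) has already been done in Construction \ref{constr2}; what remains is to unwind the definitions and transport pointwise period isomorphisms along the natural transformation $\rho_{\pst}$.

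First I would fix $M \in \MM(K)_{\Qp}$ and apply the realization $\R_{\srr}: \MM(K)_{\Qp} \to \srr(K)$ from Construction \ref{constr2}. By definition, this yields a triple $\R_{\srr}(M) = (\R_{DF_K}(M), \R_{\pst}(M), \rho_{\pst,M})$ together with a functorial Galois-equivariant isomorphism $\rho_{\pst,M}: V_{\pst}\R_{DF_K}(M) \xrightarrow{\sim} \R_{\pst}(M)$. By Construction \ref{constr11}, the underlying $(\phi,N,G_K)$-module of $\R_{DF_K}(M)$ is precisely $\R_{\hk}(M)$; and by Construction \ref{constr1}, $\R_{\pst}(M)$ refines $\R_{\eet}(M)$ (equal as underlying Galois representations).

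Next I would apply the abstract period isomorphism (\ref{abstract}) to the filtered $(\phi,N,G_K)$-module $D = \R_{DF_K}(M)$:
\begin{equation*}
\R_{\hk}(M) \otimes_{K_0^{\nr}} \B_{\st} \;=\; D_0 \otimes_{K_0^{\nr}} \B_{\st} \;\xrightarrow{\sim}\; V_{\pst}(D) \otimes_{\Qp} \B_{\st}.
\end{equation*}
Composing with $\rho_{\pst,M} \otimes \id_{\B_{\st}}$ gives the desired isomorphism $\rho_{\pst}: \R_{\hk}(M) \otimes_{K_0^{\nr}} \B_{\st} \simeq \R_{\eet}(M) \otimes_{\Qp} \B_{\st}$. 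Compatibility with $\phi$, $N$, and the $G_K$-action is immediate from the fact that both (\ref{abstract}) and $\rho_{\pst,M}$ respect these structures, and functoriality in $M$ follows because $\R_{\srr}$ is a functor and (\ref{abstract}) is natural in $D$.

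For the de Rham statement, I would repeat the same recipe: apply the second abstract period isomorphism in (\ref{abstract}) to $D = \R_{DF_K}(M)$, yielding $\R_{\dr}(M) \otimes_K \B_{\dr} \simeq V_{\pst}(D) \otimes_{\Qp} \B_{\dr}$ compatibly with filtrations, and then transport along $\rho_{\pst,M}$. To see that this is obtained from the Hyodo-Kato isomorphism (\ref{hk2}), one checks (as in the proof of Lemma \ref{pst=et}) that the two period isomorphisms in (\ref{abstract}) are intertwined by the canonical map $\B_{\st} \hookrightarrow \B_{\dr}$ and the Hyodo-Kato identification $\R_{\hk}(M) \otimes_{K_0^{\nr}} \ovk \simeq \R_{\dr}(M) \otimes_K \ovk$; this is a pointwise statement that reduces to good pairs by the universal property of $\MM(K)_{\Qp}$, where it is exactly Beilinson's comparison theorem. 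The only real subtlety — the main obstacle if one wanted to produce this on all Nori motives by hand rather than by universal property — is that one must pass from an assertion for good pairs (where $H^i_{DF}(X,Y)$ literally is a filtered $(\phi,N,G_K)$-module and the comparison is known) to arbitrary Nori motives; but this is precisely what the existence of the realization $\R_{\srr}$, together with Proposition \ref{Nori1}, handles automatically.
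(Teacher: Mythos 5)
Your proposal is correct and follows essentially the same route as the paper: the corollary is stated there as a direct summary of Construction \ref{constr2}, i.e.\ one obtains the natural equivalence $\rho_{\pst}\colon V_{\pst}\R_{DF_K}\simeq\R_{\pst}$ from the realization $\R_{\srr}$ into $\srr(K)$ and then tensors with the abstract period isomorphisms (\ref{abstract}) applied to the admissible module $\R_{DF_K}(M)$, exactly as you do. Your explicit check that the two period isomorphisms are intertwined by $\iota\colon\B_{\st}\hookrightarrow\B_{\dr}$ and the Hyodo--Kato identification (\ref{hk2}) is left implicit in the paper but is the intended argument.
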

  \end{construction}
We can illustrate   the above  constructions  by the following, essentially commutative, diagram of exact functors
    $$
\xymatrix@C=22pt@R=14pt{
 && & \Rep(G_K)\ar_{\otimes_{\Qp} \B_{\st}}[rd]\ar@/^17pt/^{\otimes_{\Qp} \B_{\dr}}[rrd] & \\
 && \Rep_{\pst}(G_K)\ar_\iota[ru]
 && M_{\B_{\st}}(\phi,N,G_K) \ar[r] &  MF_{\B_{\dr}}\\
\MM(K)_{\Qp}\ar|{\R_{\hk}}[rrr]
   \ar^{\R_{\pst}}[rru]\ar_{R_{DF_K}}[rrd]
   \ar@/^26pt/^{\R_{\eet}}[rrruu]
   \ar@/_26pt/_{\R_{\dr}}[rrrdd]
 && & M_K(\phi,N,G_K)\ar_-{\otimes_{K_0^{\nr}} \B_{\st}}[ru]\ar^-{F_0}[rd] & \\
 && {DF_K}\ar^-{}[ru]\ar_-{}[rd]\ar[uu]_{V_{\pst}}
  && V_{\ovk}^G \\
 && & V_{\dr}^K \ar@/_36pt/_{\otimes_K\B_{\dr}}[rruuu]\ar_-{F_{\dr}}[ru] &
}
$$
Here $M_{\B_{\st}}(\phi,N,G_K)$ is the exact category of free finite rank $\B_{\st}$-modules equipped with an action of $\phi,N,G_K$ ($\phi$ is an isomorphism, $N$ is nilpotent, and $G_K$-action is continuous - everything being compatible in the usual way and compatible with the same structures on $\B_{\st}$). $MF_{\B_{\dr}}$ is the exact category of  filtered $\B_{\dr}$-modules, and $M_{\B_{\st}}\to M_{\B_{\dr}}$ is the natural functor.

\subsection{$p$-adic realizations of Voevodsky's motives} 
 
\begin{recall}
The category of Voevodsky's motives $DM(K,\Qp)$ with rational coefficients 
 admits several equivalent constructions,
 each interesting in its own. In this section, we will be using the one
 of Morel (see \cite{Mor1}) for a review of which we refer the reader to
 \cite[\textsection 1]{DegMaz}.

By construction, the triangulated category $DM(K,\Qp)$ is stable
 under taking arbitrary coproducts. In this category, each smooth $K$-scheme $X$
 admits a homological motive $M(X)$, covariant with respect to morphism
 of $K$-schemes (and even finite correspondences). Each motive
 can be twisted by an arbitrary integer power of the Tate object
 $\Qp(1)$, and as a triangulated category stable under taking  coproducts,
 $DM(K,\Qp)$ is generated by motives of the form $M(X)(n)$, $X/K$ smooth,
 and $n \in \Z$.
 
The category of \emph{constructible} motives (see also \ref{df:DMh})
 is the thick\footnote{\emph{i.e.} stable by direct factors} triangulated subcategory
 of $DM(K,\Qp)$ generated by the motives $M(X)(n)$, $X/K$ smooth, and $n \in \Z$, without
 requiring stability by infinite coproducts. It is equivalent to
 Voevodsky's category of geometric motives $DM_{gm}(K,\Qp)$
 (\cite[chap. 5]{FSV})
 and can also be described in an elementary way  as follows.
Let $\Qp[\Sma_K]$ be the $\Qp$-linearization of the category of smooth 
 affine $K$-varieties, $K^b(\Qp[\Sma_K])$ its bounded homotopy category. This is
 a triangulated monoidal category, the tensor structure being induced by
 cartesian products of $K$-schemes.
 First we get the geometric $\AA^1$-derived category $D_{\AA^1,gm}(K,\Qp)$
 out of $K^b(\Qp[\Sma_K])$ by the following operations:
\begin{enumerate}
\item Take the Verdier quotient with respect to the triangulated
 subcategory generated by complexes of the form:
\begin{itemize}
\item (\textit{homotopy}) $\hdots \rightarrow 0 \rightarrow \AA^1_X \xrightarrow p X \rightarrow 0 \hdots$,
 for $X \in \Sma_K$, $p$ canonical projection;
\item (\textit{excision}) $\hdots \rightarrow 0 \rightarrow W \xrightarrow{q-k} U \oplus V
\xrightarrow{j+p} X \hdots$,
 for any cartesian square
 $\xymatrix@=10pt{W\ar^k[r]\ar_q[d] & V\ar^p[d] \\ U \ar^j[r] & X}$
 in $\Sma_K$ such that $j$ is an open immersion, $p$ is \'etale and an isomorphism
 above the complement of $j$.
\end{itemize}
\item Formally invert the Tate object $\Qp(1)$, which is the cokernel of
 $\{1\} \rightarrow \mathbb G_m$ placed in cohomological degree $+1$.
\item Take the pseudo-abelian envelope.
\end{enumerate}
Let $\tau$ be the automorphism of $\Qp(1)[1] \otimes \Qp(1)[1]$ in $D_{\AA^1,gm}(K,\Qp)$
 which permutes the factors. Because $\Qp(1)$ is invertible, it induces an
 automorphism $\epsilon$ of $\Qp$ in $D_{\AA^1,gm}(K,\Qp)$ such that
 $\epsilon^2=1$. Then we can define complementary projectors:
$p_+=(1-\epsilon)/2, p_-=(\epsilon-1)/2$,
which cut the objects, and therefore the category, into two pieces:
$$D_{\AA^1,gm}(K,\Qp)_+=\mathrm{Im}(p_+), \quad
 D_{\AA^1,gm}(K,\Qp)_-=\mathrm{Im}(p_-).$$
 Then, according to a theorem of Morel (cf. \cite[16.2.13]{CD3}),
 $\DMgm(K,\Qp) \simeq D_{\AA^1,gm}(K,\Qp)_+$.
\end{recall}

\begin{example}\label{ex:basic_realization}
Let $F$ be an extension field of $\Qp$
 and $\mathscr A$ be a Tannakian $F$-linear category
 with a fiber functor $\omega:\mathscr A \rightarrow V_F$.
Consider a contravariant functor:
$$
R:\quad (\Sma_K)^{op} \rightarrow C^b(\mathscr A).
$$
It automatically extends to a contravariant functor
 $R':K^b(\Qp[\Sma_K])^{op} \rightarrow D^b(\mathscr A)$. 
 The conditions for $R^{\prime}$ to induce a contravariant functor 
 defined on $\DMgm(K,\Qp)$ are easy to state given the
 description of $\DMgm$ given above.
 We will use the following simpler criterion:

We now suppose that the functor $R$ takes its
 values in the bigger category $C^b(\Ind-\mathscr A)$
 but we assume that there exists a functorial isomorphism
$$
H^i\omega R(X) \simeq H^i(X(\mathbf C),F)
$$
and that the product map
$H^i(X(\mathbf C),F) \otimes  H^i(Y(\mathbf C),F) \rightarrow
 H^i(X(\mathbf C) \times Y(\mathbf C),F)$ can be lifted
 to a map $R(X) \otimes R(Y) \rightarrow R(X \times_k Y)$ in
 $C^b(\mathscr A)$.

Then the functor $R'$ uniquely extends to  a realization functor
$$
\tilde R^\vee:\quad \DMgm(K,\Qp)^{op} \rightarrow D^b(\mathscr A)
$$
which is monoidal and such that $H^i(\tilde R^\vee(M(X)))=H^i(R(X))$.\footnote{Note
 in particular that the permutation $\epsilon$ acts by $-1$ on
 singular cohomology.} 
After composing this functor with the canonical duality endofunctor
 of the (rigid) triangulated monoidal category $D^b(\mathscr A)$,
 we get a covariant realization:
$$
\tilde R:\quad \DMgm(K,\Qp) \rightarrow D^b(\mathscr A)
$$
such that $H^i\tilde R(M(X))=H^i(R(X))^\vee$.
 Note also that, by construction, the preceding identification
 can be extended to closed pairs. Also, because $\DMgm(K,\Qp)$ satisfies
 h-descent (see section \ref{ex:DMh_comparison}), it
 can be extended to singular $K$-varieties and pairs of such.
\end{example}

Using this example we can easily build realizations:
\begin{proposition}\label{prop:basic_realization}
Let $F$ be an extension field of $\Qp$
 and $\mathscr A$ be a Tannakian $F$-linear category
 with a fiber functor $\omega:\mathscr A \rightarrow V_F$.
Consider a representation $A^*:\Delta_g \rightarrow \mathscr A$
 such that $\omega A^*$ is isomorphic to the singular representation
 (see Definition \ref{df:Nori_motives}).

Then there exists a canonical covariant monoidal realization:
$$
R_A:\quad \DMgm(K,\Qp) \rightarrow D^b(\mathscr A)
$$
such that for any good pair $(X,Y,i)$,
 $H^iR_A(M(X,Y))=A^i(X,Y)^\vee$ and this identification is
 functorial in $(X,Y,i)$ -- including with respect to boundaries.

Moreover, this construction is funtorial with respect
 to exact morphisms of representations.
%
%if we have an essantially commutative diagram
 %of functors:
%$$
%\xymatrix@C=30pt@R=2pt{
%& \mathscr A\ar^\varphi[dd] \\
%\Delta_g\ar^-{A^*}[ru]\ar_-{B^*}[rd] \\
 %& \mathscr B
%}
%$$
%such that $\varphi$ is exact and monoidal
 %we get an essentially commutative diagram of functors:
%$$
%\xymatrix@C=40pt@R=2pt{
%& D^b(\mathscr A)\ar^\varphi[dd] \\
%\DMgm(K,\Q)\ar^-{R_A}[ru]\ar_-{R_B}[rd] \\
 %& D^b(\mathscr B)
%}
%$$
\end{proposition}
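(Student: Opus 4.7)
The plan is to construct $R_A$ by verifying the hypotheses of Example \ref{ex:basic_realization}, using Beilinson's cellular construction (Section \ref{pstB}) to promote $A^*$ from good pairs to a complex-valued presheaf on smooth $K$-varieties.

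First I would extend $A^*$ to Nori motives. By Proposition \ref{Nori1}, the factorisation $\omega \circ A^* \simeq H^*(-(\mathbf C), F)$ provided by the hypothesis yields a canonical faithful exact $F$-linear functor
$$\tilde A:\quad \MM(K)_F \rightarrow \mathscr A$$
extending $A^*$ on good pairs, and $\tilde A$ is monoidal since $\mathscr A$ is Tannakian and Nori's monoidal structure on good pairs is built out of products of very good pairs together with the Künneth isomorphism for singular cohomology (which lifts through $\omega$).

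Next I would build a contravariant functor $R:(\Sma_K)^{op} \to C^b(\Ind\text{-}\mathscr A)$. For an affine $X \in \Sma_K$ and a cellular stratification $F_{\jcdot}X$ (Corollary \ref{cor1}), every triple $(F_jX,F_{j-1}X,j)$ is very good, hence good, so I set
$$R(X,F_{\jcdot}X) := \bigl[A^0(F_0X,\emptyset) \to A^1(F_1X,F_0X) \to \cdots \to A^d(X,F_{d-1}X)\bigr],$$
with differentials the coboundary edges. Taking the homotopy colimit over the filtered system of cellular stratifications produces $R(X) \in C^b(\Ind\text{-}\mathscr A)$, covariantly functorial in maps of affine smooth $K$-varieties thanks to Corollary \ref{cor1}(3). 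For non-affine $X$, one extends along affine open \v Cech covers exactly as in Section \ref{pstB}.

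To apply Example \ref{ex:basic_realization}, two conditions remain. The functorial isomorphism $H^i\omega R(X) \simeq H^i(X(\mathbf C),F)$ follows from Proposition \ref{prop1} applied to $\omega A^* \simeq H^*(-(\mathbf C),F)$, since $\omega$ is exact and $\omega R(X,F_{\jcdot}X)$ is then the classical cellular complex computing singular cohomology. A lift of the Künneth map to a morphism $R(X)\otimes R(Y) \to R(X\times_K Y)$ in $C^b(\Ind\text{-}\mathscr A)$ is obtained from the product stratification $(F_{\jcdot}X) \times (G_{\jcdot}Y)$, which is a cellular stratification of $X \times Y$ because very good pairs are stable under product, combined with the monoidality of $\tilde A$. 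Example \ref{ex:basic_realization} then produces the desired covariant monoidal realization $R_A: \DMgm(K,\Qp) \to D^b(\mathscr A)$ with $H^iR_A(M(X,Y)) = A^i(X,Y)^{\vee}$ on good pairs. Functoriality in exact morphisms of representations is inherited from the naturality clause of Proposition \ref{Nori1}: an exact morphism $A^*\to B^*$ extends to $\tilde A \to \tilde B$, which induces $R_A \to R_B$ levelwise on the cellular construction and hence on the realization.

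The main obstacle is the multiplicative compatibility. Verifying that the product stratification is again cellular and that the Künneth map lifts strictly (not merely up to coherent isomorphism) through $\tilde A$ forces one to organise the homotopy colimit over stratifications so that the lift is genuinely functorial; both issues reduce to the standard manipulations with very good pairs used in the construction of the tensor structure on Nori motives in \cite{HMS}, but the bookkeeping is the delicate point.
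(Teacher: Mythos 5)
Your proposal follows essentially the same route as the paper: associate to each cellular stratification the complex $A^0(F_0X)\to A^1(F_1X,F_0X)\to\cdots\to A^d(X,F_{d-1}X)$, pass to the (filtered) colimit over stratifications to get a contravariant functor $(\Sma_K)^{op}\to C^b(\Ind-\mathscr A)$, and then invoke Example \ref{ex:basic_realization}. The extra scaffolding you add (extending $A^*$ to Nori motives via Proposition \ref{Nori1}, and the product-stratification argument for the K\"unneth lift) fleshes out details the paper leaves implicit but does not change the argument.
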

\begin{proof}
Let $X$ be a smooth affine $K$-scheme.
 To any cellular stratification of $X$
  (cf. Corollary \ref{cor1}) $F_{\jcdot}X$, we can associate the complex
$$
R'_A(F_{\jcdot}X):=0 \rightarrow A^0(F_0X) \rightarrow A^1(F_1X,F_0X)
   \rightarrow \hdots \rightarrow A^d(X,F_{d-1}X) \rightarrow 0.
$$
We put: $R'_A(X):=\mathrm{colim}_{F_{\jcdot}X} R'_A(F_{\jcdot}X)$. This defines
 a contravariant functor:
$$
R'_A:\quad  (\Sma_K)^{op} \rightarrow C^b(\Ind-\mathscr A)
$$
which satisfies the assumptions of the previous example.
 Hence we get the proposition by applying the construction
 of this example.
\end{proof}

\begin{remark}\label{rem:realizations_agree}
Consider again a fiber functor $\omega:\mathscr A \rightarrow V_F$
 and a contravariant functor
 $$R:\quad (\Sch_K)^{op} \rightarrow C^b(\Ind-\mathscr A)$$
 such that for any $K$-variety $X$, one has a
 functorial isomorphism $H^i\omega R(X) \simeq H^i(X(\mathbf C),F)$.
 Then we can apply the preceding example to $R|_{\Sma_K}$
 and also the preceding proposition to the unique representation
 $A^*$ induced by $R$ such that
 $A^i(X,Y)=H^i(\mathrm{Cone}(R(X)\rightarrow R(Y)[-1])$.
By applying the construction of the preceding proof,
 we get for any smooth affine $K$-scheme a canonical map
 of complexes
$$
R(X) \rightarrow R_A(X)
$$
which is a quasi-isomorphism. For the functoriality of the construction
 of the previous example, we thus get a canonical isomorphism
 between the two realizations of any Voevodsky's  motive $M$:
$$
\tilde R(M) \xrightarrow{\sim} \tilde R_A(M)
$$
\end{remark}

\begin{remark}\label{rem:motives_singular}
Voevodsky's motives $M(X)$ are homological: they are covariant in $X$.
 In fact, the monoidal category $\DMgm(K,\Qp)$ is rigid: any object
 has a strong dual -- cf. \cite{Riou}. Then for any smooth $K$-variety $X$,
 $M(X)^\vee$ is the cohomological motive of $X/K$.
 Using the notations of the previous proposition,
 because $R_A$ is monoidal and therefore commutes with  strong duals,
 we get: $H^iR_A(M(X)^\vee)=A^i(X)$.

Recall that the category $\DMgm(K,\Qp)$ can be extended to any base and satisfies
 the 6 functors formalism (cf. \cite{CD3}, in particular 16.1.6).
 According to \emph{loc. cit.}, 15.2.4, $M(X)^\vee=f_*(\un_X)$ where
 $f:X \rightarrow \Spec(K)$ is the structural morphism.
 The preceding relation can be rewritten:
$$
H^iR_A(f_*(\un_X))=A^i(X).
$$
Note finally that $f_*$ exists for any $K$-variety $X$.
 One can extend the above identification to this general case using 
 De Jong resolution of singularities and $h$-descent,
 which is true for Voevodsky'srational motives (\cite[14.3.4]{CD3})
 and for Betti cohomology.

There is fully faithful monoidal functor
$$
\mathrm{CHM}(K)_{\Qp}^{op} \rightarrow \DMgm(K,\Qp), \quad h(X) \mapsto M(X)
$$
from the category of Chow motives ($X$ is smooth projective over $K$).
Applying duality on the right hand side, we get a covariant
 fully faithful monoidal functor:
$$
\mathrm{CHM}(K)_{\Qp}\rightarrow \DMgm(K,\Qp), \quad h(X) \mapsto M(X)^\vee=f_*(\un_X).
$$
In view of this embedding, it is convenient to identify the Chow motive $h(X)$
 with the Voevodsky's(cohomological) motive $M(X)^\vee$.
\end{remark}

Let us also state the following corollary 
 which follows from the preceding proposition
 and \cite{De3}:
\begin{corollary}
In the assumptions of the previous proposition,
 for any smooth projective $K$-scheme $X$ of dimension $d$,
 the complex $R_A(h(X))=R_A(M(X)^\vee)$ is split:
 there exists a canonical isomorphism:
$$
R_A(h(X))=\bigoplus_{i=0}^{2d} H^i\big(R_A(h(X))\big)[-i]
 =\bigoplus_{i=0}^{2d} A^i(X)[-i].
$$
\end{corollary}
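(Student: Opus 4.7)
The plan is to apply Deligne's canonical decomposition theorem from \cite{De3} to the complex $R_A(h(X)) \in D^b(\mathscr A)$, equipped with a Lefschetz operator induced by a hyperplane section of $X$.

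First, I would construct the motivic Lefschetz operator. Picking a projective embedding of $X$ determines a hyperplane class $\eta \in CH^1(X)_{\Qp}$, which via the cycle class map in Voevodsky motives yields a morphism $\un \to h(X)(1)[2]$ in $\DMgm(K,\Qp)$. Cup product with $\eta$ produces an endomorphism
$$L : h(X) \to h(X)(1)[2],$$
and applying the monoidal realization $R_A$ of Proposition \ref{prop:basic_realization} I obtain an induced endomorphism, still denoted $L$, of $R_A(h(X))$ in $D^b(\mathscr A)$ of bidegree $(1)[2]$.

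Next, I would verify that $L$ satisfies Hard Lefschetz on cohomology, that is, that
$$L^i : H^{d-i}(R_A(h(X))) \to H^{d+i}(R_A(h(X)))(i)$$
is an isomorphism for all $0 \leq i \leq d$. By Remark \ref{rem:motives_singular} together with Proposition \ref{prop:basic_realization}, the cohomology objects $H^i R_A(h(X))$ identify with $A^i(X)$, and applying the faithful exact fiber functor $\omega$ recovers singular cohomology $H^i(X(\mathbf C), F)$, with $L$ acting as cup product with the class of the hyperplane. Since $\omega$ is exact and conservative, checking that $L^i$ induces an isomorphism on cohomology reduces to the classical Hard Lefschetz theorem for smooth projective complex varieties.

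Finally, I would invoke Deligne's theorem \cite{De3}: a bounded complex in a $\Qp$-linear abelian category equipped with an endomorphism of bidegree $(1)[2]$ whose iterates satisfy Hard Lefschetz on cohomology admits a canonical decomposition as the direct sum of its shifted cohomology objects. Applied to $R_A(h(X))$ and $L$, this gives the required isomorphism
$$R_A(h(X)) \simeq \bigoplus_{i=0}^{2d} H^i(R_A(h(X)))[-i] = \bigoplus_{i=0}^{2d} A^i(X)[-i].$$
The main delicate point is the first step: one must have at one's disposal the cycle class map in $\DMgm(K, \Qp)$ and verify that its image under $R_A$ genuinely is cup product with the hyperplane class in $A^2(X)(1)$, so that Hard Lefschetz in $\mathscr A$ can be pulled back from the classical topological statement. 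This is essentially formal given the monoidal nature of $R_A$ and the compatibility with cycle classes asserted in Proposition \ref{prop:basic_realization}, but it does rely on the representation $A^*$ being well-behaved under cup products.
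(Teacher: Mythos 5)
Your proposal is correct and follows the same route as the paper: the paper's proof is precisely to apply Deligne's decomposition theorem from \cite{De3} in $D^b(\mathscr A)$, checking the condition (L.V.) for the map $h(X)\rightarrow h(X)(1)[2]$ given by multiplication by the motivic first Chern class of an ample bundle. Your additional step of verifying Hard Lefschetz by passing through the faithful exact fiber functor $\omega$ to singular cohomology is exactly the justification the paper leaves implicit.
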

This decomposition statement follows simply from \emph{loc. cit.} as the derived category
 $D^b(\mathscr A)$ satisfies the assumptions of \emph{loc. cit.}
 and the object $R_A(h(X))$ 
 satisfies the assumption (L.V.) for the 
 map $h(X) \rightarrow h(X)(1)[2]$ given by multiplication
 by the (motivic) first Chern class of an ample invertible bundle
 on $X$.

\begin{example}
In particular, applying the preceding proposition
 to the singular representation,
 we get the classical realization,\footnote{Conjecturally,
 this is more than a realization: it is thought to be an equivalence
 of categories!} due to Nori,
 of (cohomological) Nori's  motives:
$$
\Gamma:\quad \DMgm(K,\Qp) \rightarrow D^b(\MM(K)_{\Qp}).
$$
By definition, and applying the preceding remark, we get for
 any smooth projective (resp. smooth, any) $K$-variety
 $f:X \rightarrow \Spec(K)$:
$$
H^i\Gamma(h(X))=H^i_{mot}(X), \
 \text{resp. } H^i\Gamma(M(X))=H^i_{mot}(X)^\vee, \
 H^i\Gamma(f_*(\un_X))=H^i_{mot}(X).
$$
When $X$ is smooth projective of dimension $d$,
 we also get by the above corollary the decomposition:
$$
\Gamma(h(X))=\bigoplus_{i=0}^{2d} \Hm^i(X)[-i]
$$
Moreover, because of the functorialility statement of
 the proposition, this realization of Voevodsky's  motives
 is the universal (initial) one. 
\end{example}

\begin{num}\label{num:real_Voevodsky}
More interestingly, using either Example \ref{ex:basic_realization}
 or Proposition \ref{prop:basic_realization},
 we can get various p-adic realizations of Voevodsky's  motives,
 and extend the de Rham $p$-adic comparison theorem to the derived situation
 as summarized in the following essentially commutative diagram
 of triangulated monoidal functors:
$$
\xymatrix@C=14pt@R=14pt{
 & && D^b\big(\Rep(G_K)\big)\ar^{\otimes_{\Qp} \B_{\dr}}[rd] & \\
\DMgm(K,\Qp)\ar_{\R\Gamma_{DF_K}}[rrd]\ar^{\R\Gamma_{\pst}}[rr]\ar@/^15pt/^{\R\Gamma_{\eet}}[rrru]
   \ar@/_30pt/_{\R\Gamma_{\dr}}[rrrd]
 && D^b\big(\Rep_{\pst}(G_K)\big)\ar_\iota[ru] & 
 & D^b(MF_{\B_{\dr}}) \\
 & & D^b(DF_K)\ar[u]^{V_{\pst}}\ar[r] & D^b(V_{\dr}^K)\ar_{\otimes_{K} \B_{\dr}}[ru] &
}
$$
where $\iota$ is the canonical functor.\footnote{One should be
 careful that though $\iota$ is induced by a fully faithful functor
 on the corresponding abelian categories, it is a {\it non full}
 faithful functor.}
 The functors $\R\Gamma_{\eet}$, $\R\Gamma_{\pst}$ and $\R\Gamma_{DF_K}$ are
 obtained either from \ref{ex:basic_realization} or equivalently
 from \ref{prop:basic_realization}
 (according to Remark \ref{rem:realizations_agree}) by considering
 respectively the following functors:
\begin{itemize}
\item $X \in \Sma_K, f:X \rightarrow \Spec(K) \mapsto \derR f_*(\Qp)$
 and $(X,Y,i) \mapsto H^i_{\eet}(X_{\ovk}, Y_{\ovk},\Qp)$;
\item $X \in \Sma_K \mapsto \R\Gamma_{\pst}(X_{\ovk},r)
 \simeq\R\Gamma^B_{\pst}(X_{\ovk},\Qp(r))$ \\ 
 and $(X,Y,i) \mapsto H^i_{\eet}( X_{\ovk}, Y_{\ovk},\Qp) \in \Rep_{\pst}(G_K)$;
\item  $(X,Y,i) \mapsto H^i_{DF}(X, Y)$ (see Construction \ref{constr11}).
\end{itemize}
The functor $\R\Gamma_{\dr}$ is obtained by composing $\R\Gamma_{DF_K}$ with the canonical functor $DF_K\to V^K_{\dr}$. 

  For $\epsilon=\eet,\pst,DF_K$, one has defined in the preceding section
 an analoguous exact monoidal realization functor $R_\epsilon$
 from the category of Nori's  motives $\MM(K)_{\Qp}$. This functor
 being exact induces a functor on the (bounded) derived categories
 and according to the functoriality in Proposition
 \ref{prop:basic_realization}, one gets for any Voevodsky motive
 $M \in \DMgm(K,\Qp)$:
\begin{equation} \label{eq:real_Voevodsy&Nori}
\R\Gamma_\epsilon(M)=\R_\epsilon\big(\Gamma(M)\big).
\end{equation}
Same for the de Rham realizations: we have $\R\Gamma_{\dr}(M)=\R_{\dr}\big(\Gamma(M)\big)$.
Therefore, the essential commutativity of the previous diagram
 simply follows from the de Rham  comparison theorem for
 Nori's  motives. More precisely, it yields, for any Voevodsky's  motive
 $M$,  the de Rham comparison isomorphism:
$$
\rho_{\dr}: \quad \R\Gamma_{\dr}(M)\otimes_{K}\B_{\dr}
 \simeq \R\Gamma_{\eet}(M)\otimes_{\Qp}\B_{\dr}
$$
which is a quasi-isomorphism of complexes
 of filtered finite rank  $\B_{\dr}$-modules.

This comparison can be made more precise through the Hyodo-Kato realization,
 as illustrated in the essentially commutative diagram:
$$
\xymatrix@C=22pt@R=14pt{
 && & D^b\big(\Rep(G_K)\big)\ar^{\otimes_{\Qp} \B_{\st}}[rd]\ar@{}|{(1)}[dd] & \\
 && D^b\big(\Rep_{\pst}(G_K)\big)\ar_\iota[ru]
 && D^b\big(M_{\B_{\st}}(\phi,N,G_K)\big) \\
\DMgm(K,\Qp)\ar|{\R\Gamma_{\hk}}[rrr]
   \ar^{\R\Gamma_{\pst}}[rru]\ar_{\R\Gamma_{DF_K}}[rrd]
   \ar@/^26pt/^{\R\Gamma_{\eet}}[rrruu]
   \ar@/_26pt/_{\R\Gamma_{\dr}}[rrrdd]
 && & D^b\big(M_K(\phi,N,G_K)\big)\ar_-{\otimes_{K_0^{\nr}} \B_{\st}}[ru]\ar^-{F_0}[rd]\ar@{}|{(2)}[dd] & \\
 && D^b(DF_K)\ar[ru]\ar[rd]\ar[uu]_{V_{\pst}}
 && D^b\big(V_{\ovk}^G\big). \\
 && & D^b(V_{\dr}^K)\ar_-{F_{\dr}}[ru] &
}
$$
 The Hyodo-Kato realization ${\R\Gamma_{\hk}}$ is obtained by composing $\R\Gamma_{\dr}$ with the projection $DK_F\to M_K(\phi,N,G_K)$. 
  Then the essential commutativity of the part (1) and (2)
 of the above diagram corresponds, respectively, for any Voevodsky's  motive $M$, to the potentially semistable comparison theorem and 
 to the Hyodo-Kato quasi-isomorphism:
\begin{align*}
\rho_{\pst}:& \quad \R\Gamma_{\hk}(M)\otimes_{K^{\nr}_0}\B_{\st}
 \simeq \R\Gamma_{\eet}(M)\otimes_{\Qp}\B_{\st}, \\
\iota_{\dr}:& \quad \R\Gamma_{\hk}(M)\otimes_{K^{\nr}_0}\ovk
 \simeq \R\Gamma_{\dr}(M)\otimes_{K}\ovk.
\end{align*}
Again, the identification \eqref{eq:real_Voevodsy&Nori} holds
 when $\epsilon=\hk$ and the above canonical comparison quasi-isomorphisms 
correspond to  the comparison isomorphisms obtained in the previous section.
\end{num}

\begin{remark}
By construction, for any (smooth) $K$-variety $f:X \rightarrow \Spec(K)$,
 one has a canonical identification:
 $\R\Gamma_{\eet}(f_*(\un_X))=\derR f_*(\Qp)$ where the right hand side
 denotes the right derived functor of the direct image for \'etale $p$-adic
 sheaves.

This implies that the realization functor $\R\Gamma_{\eet}$ constructed above
 coincides with that of \cite[7.2.24]{CD4}, denoted by $\rho_p^*$,
 and equivalently to the one defined in \cite{Ayoub2}.
 In particular, it can be extended to any base and commutes with
 the six functors formalism. This explains the preceding relation and
 why we have prefered the covariant realization rather than the contravariant
 one (see the end of Example \ref{ex:basic_realization}).\footnote{In Section
  \ref{sec:modules}, we will similarly extend the realization functor $\R\Gamma_{\pst}$ to arbitrary $K$-bases (see more precisely Rem. \ref{rem:Rsynt_extends}).}
\end{remark}

\begin{example}\label{ex:syntomic_ssp_DM}
The above realizations  allow us to define syntomic cohomology
 of a motive $M$ in $\DMgm(K,\Qp)$ as
$$
\R\Gamma_{\synt}(M):=\R\Hom_{D(\Rep_{\pst})}(\Qp,\R\Gamma_{\pst}(M))=\R\Hom_{D(DF_K)}(K(0),\R\Gamma_{DF_K}(M)).
$$
In particular, we have the syntomic descent spectral sequence
$$
{}^{\synt}E^{i,j}_2:=H^i_{\st}\big(G_K,H^j\R\Gamma_{\eet}(M)\big)
 \Rightarrow H^{i+j}\R\Gamma_{\synt}(M).
$$
If we apply it to the cohomological Voevodsky's  motive $M(X)^\vee=f_*(\un_X)$
 of any $K$-variety $X$ with structural morphism $f$, 
 we get back the results of Theorem \ref{thm:syntomic_descent}.
 
 An interesting case is obtained by using the (homological) motive with
 compact support $M^c(X)$ in $\DMgm(K,\Qp)$ of Voevodsky
 for any $K$-variety $X$, and its
 dual $M^c(X)^\vee=\underline{\Hom}(M(X),\Qp)$ which belongs to $\DMgm(K,\Qp)$.
 Then $\R\Gamma_{\synt}(M^c(X)^\vee(r))$ is the \emph{$n$-th twisted syntomic complex
  with compact support}
 and we recover  the syntomic descent spectral sequence with compact support from Remark \ref{compact}:
$$
{}^{\synt,c}E^{i,j}_2:=H^i_{\st}\big(G_K,H^j_{\eet,c}(X_{\ovk},\Qp(r))\big)
 \Rightarrow H^{i+j}_{\synt,c}(X,r).
$$
Indeed, in terms of the 6 functors formalism, $M^c(X)^\vee(r)=f_!(\un_X)(r)$
 and the identification relevant to compute the above $E_2$-term follows
 from the previous remark.
 \end{example}

 \subsection{Example I: $p$-adic realizations of the motivic fundamental group}
%\subsubsection{Nori's motivic fundamental group}
     Let $\HMMe(K)_{\Qp}$ denote the category of effective homological Nori's motives, i.e., the diagram category $\scc(\widetilde{\Delta}^{\eff}_g,H_*)$, $H_*:=(H^*)^*:=\Hom(H^*, \Q_p)$, where the diagram $\widetilde{\Delta}^{\eff}_g$ is obtained from the diagram $\Delta^{\eff}_g$ by reversing the edge $f^*$ to $f_*: (X,Y,i)\to (X^{\prime},Y^{\prime},i)$ and changing $\partial$ to $\partial:  (X,Y,i)\to (Y,Z,i-1)$. There is a duality functor $\vee: \HMMe(K)_{\Qp}\to\MM(K)_{\Qp}^{\op}$ respecting the representations $H_*$ and $H^*$ via the usual duality that sends a good pair $(X,Y,i)$ to $(X,Y,i)$. This induces an equivalence on the derived categories $\vee: D^b(\HMMe(K)_{\Qp})\stackrel{\sim}{\to} D^b(\MM(K)_{\Qp})^{\op}$. 
                
In \cite{Cu1} Cushman developed a motivic theory of the fundamental group, i.e., he showed that the unipotent completion of the fundamental group of varieties over complex numbers carries a motivic structure in the sense of Nori. We will recall his main theorem.

$\bullet$ Let ${\mathcal Var}^*_K$ be the category of pairs $(X,x)$, where $X$ is a variety defined over $K$ and $x$ is a $K$-rational base point;  morphism between such pairs are  morphisms between the corresponding varieties defined over $K$ that are compatible with  the base points.

$\bullet$ Let ${\mathcal Var}^{**}_K$ be the category of triples $(X;x_1,x_2)$, where $X$ is defined over $K$ and $x_1, x_2$ are $K$-rational base points. 

For a variety $X$ over ${\mathbf C}$, let $\pi_1(X,x)$ be the fundamental group of $X$ with base point $x$ and let  $\pi_1(X;x_1,x_2)$ be the space of based paths up to homotopy from $x_1$ to $x_2$.  Denote  by $I_{x_2}$ -- the augmentation ideal in $\Q_p[\pi_1(X,x_2)]$ (i.e., the kernel of the augmentation map $\Q_p[\pi_1(X,x_2)]\to \Q_p$) which acts on the right on $\pi_1(X;x_1,x_2)$. The following theorem  \cite[Thm 3.1]{Cu2} shows that the quotient
$\Q_p[\pi_1(X;x_1,x_2)]/I^n_{x_2}$, $n\in\N$, has motivic version $\Pi^{n}(X;x_1,x_2)$ (in the sense of Nori).

\begin{theorem} For every $n\in \N$, there are functors 
\begin{align*}
\Pi^n:{\mathcal Var}^{**}_K \to \HMMe(K)_{\Qp},\quad 
\Pi^n:{\mathcal Var}^{*}_K  \to \HMMe(K)_{\Qp}.
\end{align*} These functors have the following properties.
\begin{enumerate}
\item There is a natural transformation
$$
\Pi^{n+1}(X;x_1,x_2)\to \Pi^{n}(X;x_1,x_2).
$$
\item We have a natural isomorphism of  $\Q_p$-vector spaces
$$\widetilde{H}(\Pi^n(X({\mathbf C});x_1,x_2))\simeq \Q_p[\pi_1(X({\mathbf C});x_1,x_2)]/I^n_{x_2}.$$ 
\item There are natural transformations
\begin{align*}
 \Pi^{n}(X;x_1,x_2) \otimes  \Pi^{n}(X;x_2,x_3)\to
\Pi^{n}(X;x_1,x_3)\\
\Pi^{n+m+1}(X,x_2) \to   \Pi^{m+1}(X,x_2)\otimes
\Pi^{n+1}(X,x_2)
\end{align*}
Via the natural isomorphisms in (2), these transformations  are compatible with the product and coproduct structures as well as with the inversion  in the path space.
\end{enumerate}
This data is equivalent to giving a pro-$\HMMe$ structure on the inverse limit $\Q_p[\pi_1(X({\mathbf C});x_1,x_2)]/I^n_{x_2}$ such that all the obvious maps are motivic, and the completed ideal $I^{\wedge}_{x_2}$ is a sub-motive. 
\end{theorem}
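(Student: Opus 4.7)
The plan is to apply the universal property of Nori's diagram category (Proposition \ref{Nori1}) to an algebraic model of the path-space cohomology of $(X;x_1,x_2)$. Concretely, for each $n \geq 1$ and each $(X;x_1,x_2) \in {\mathcal V}ar^{**}_K$, I would build a finite diagram $D_n(X;x_1,x_2)$ inside $\Delta^{\eff}_g$ whose associated Betti realization canonically computes the dual of $\Q_p[\pi_1(X({\mathbf C});x_1,x_2)]/I^n_{x_2}$. The geometric input is the cosimplicial scheme $P^{\scriptscriptstyle\bullet}(X;x_1,x_2)$ with $P^p = X^p$, cofaces given by inserting $x_1$ on the left, $x_2$ on the right, and partial diagonals in the middle, and codegeneracies given by projections. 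Truncating $P^{\scriptscriptstyle\bullet}$ at level $n$, applying the algebraic bar construction in the style of Chen--Wojtkowiak--Deligne--Goncharov, and then resolving each $X^p$ by a cellular stratification as in Corollary \ref{cor1} breaks the resulting bicomplex into relative cohomology terms of good pairs that assemble into $D_n(X;x_1,x_2)$.

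Having this diagram-level data, functoriality in $(X;x_1,x_2)$ is automatic from the construction, and Proposition \ref{Nori1} applied to the faithful exact fiber functor $\HMMe(K)_{\Q_p} \to V_{\Q_p}$ lifts the diagram $(X;x_1,x_2) \mapsto D_n(X;x_1,x_2)$ to a functor $\Pi^n:{\mathcal V}ar^{**}_K \to \HMMe(K)_{\Q_p}$ whose Betti realization gives the isomorphism in item (2). The one-pointed case is obtained by setting $x_1=x_2$. The projection $\Pi^{n+1} \to \Pi^n$ of (1) comes from the natural truncation map of bar complexes. For the natural transformations in (3), the path-composition map arises from the geometric morphism $P^{\scriptscriptstyle\bullet}(X;x_1,x_2) \times P^{\scriptscriptstyle\bullet}(X;x_2,x_3) \to P^{\scriptscriptstyle\bullet}(X;x_1,x_3)$ identifying the two copies of $x_2$; the coproduct and the inversion morphism come from the corresponding coalgebra and antipode structure on the bar complex of an augmented differential graded algebra. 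By Proposition \ref{Nori1}(2), each lifts to a morphism of motivic functors, and by Chen's theorem the Betti realizations match the classical structures on $\Q_p[\pi_1]/I^n_{x_2}$. The final equivalence with a pro-$\HMMe$ structure on the inverse system then follows formally from (1), (2), (3), since the completed ideal $I^{\wedge}_{x_2}$ appears as an explicit sub-diagram of the projective system $\{D_n\}_n$ and therefore lifts to a sub-motive.

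The principal obstacle is the first step: showing that the singular cohomology of the constructed diagram $D_n(X;x_1,x_2)$ of good pairs is canonically and functorially identified, \emph{as an algebra with coaction} (not just as a vector space), with the dual of $\Q_p[\pi_1(X({\mathbf C});x_1,x_2)]/I^n_{x_2}$. This reduces to Chen's theorem that the reduced bar complex on the de Rham forms of $X({\mathbf C})$ computes the dual of the completed group ring, but the delicate point is to carry out the bar construction strictly \emph{within} $\Delta^{\eff}_g$ via Beilinson-style cellular approximations of the $X^p$ so that all of the algebraic structures appearing in (3) are already realized at the diagram level. Once this is accomplished, the remainder is formal Tannakian packaging inside Nori's diagram category.
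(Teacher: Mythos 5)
You should first know that the paper does not prove this statement: it is recalled verbatim as Cushman's theorem (\cite[Thm 3.1]{Cu2}, building on the thesis \cite{Cu1}), so there is no in-paper argument to compare yours against. Judged on its own terms, your outline is in the right spirit --- a geometric model of the path space fed into the universal property of Nori's diagram category --- but it has a genuine gap at its central step, and the gap is more serious than the ``delicate point'' you describe. Proposition \ref{Nori1} lifts a \emph{representation of a diagram} vertex-by-vertex; it does not convert ``a diagram whose Betti realization computes $\Q_p[\pi_1(X(\mathbf C);x_1,x_2)]/I^n_{x_2}$'' into an object of $\HMMe(K)_{\Qp}$ isomorphic to that quotient. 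To extract $\Pi^n$ from your bar bicomplex you must first exhibit the bicomplex as a complex \emph{of motives}, i.e.\ its differentials must be morphisms in $\HMMe(K)_{\Qp}$, and then take cohomology in that abelian category. The vertical (cellular) differentials are fine, being built from the functoriality and boundary edges of $\widetilde\Delta^{\eff}_g$; but the horizontal bar differentials involve the multiplication $C^*(X^a)\otimes C^*(X^b)\to C^*(X^{a+b})$ and evaluation at base points, and these are not edges of the diagram. Realizing them as morphisms of Nori motives requires the monoidal structure and a K\"unneth isomorphism at the level of cellular complexes, where the product of two cellular stratifications is not a cellular stratification of the product; none of this is supplied by Proposition \ref{Nori1} or Corollary \ref{cor1}, and it is precisely here that the compatibilities in item (3) would have to be established. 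You acknowledge this obstacle but do not fill it, so the construction of $\Pi^n$ as an object of the abelian category is not actually completed.

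For comparison, the construction that works (Beilinson's, used by Cushman and, in the mixed Tate setting, by Deligne--Goncharov) avoids the bar complex altogether: up to the indexing of the powers of $I_{x_2}$, the quotient $\Q_p[\pi_1(X(\mathbf C);x_1,x_2)]/I^{n+1}_{x_2}$ is canonically the single relative homology group $H_n(X^n,D)$, where $D=\bigcup_{i=0}^{n}D_i$ with $D_0=\{x_1\}\times X^{n-1}$, $D_i=\{t_i=t_{i+1}\}$ for $0<i<n$, and $D_n=X^{n-1}\times\{x_2\}$. A single pair $(X^n,D,n)$ (refined into good pairs via Corollary \ref{cor1} if necessary) is a vertex of $\widetilde\Delta^{\eff}_g$ and hence defines an object of $\HMMe(K)_{\Qp}$ outright, with no complex to take cohomology of; the transition maps $\Pi^{n+1}\to\Pi^n$, the composition of paths, the coproduct and the antipode are then induced by explicit morphisms of such pairs (projections, diagonals, insertions of base points), so Proposition \ref{Nori1}(2) yields items (1)--(3) directly. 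If you wish to salvage the bar-construction route, you must either prove the acyclicity statement that reduces your bicomplex to a single degree --- which is essentially equivalent to Beilinson's lemma above --- or work in a derived/dg enhancement of Nori motives, which the formalism of this paper does not provide.
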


  Dualizing the realization functors  of Nori's motives used in Constructions \ref{constr1}, \ref{constr11} we obtain the following functors
 \begin{align*}
& \Pi^n_{\eet}:\quad {\mathcal Var}^{**}_K\to \Rep(G_K),\quad   \Pi^n_{\eet}:=\R_{\eet}\Pi^n;\\
& \Pi^n_{\hk}:\quad {\mathcal Var}^{**}_K\to M_K(\phi,N,G_K),\quad  \Pi^n_{\hk}:=\R_{\hk}\Pi^n;\\
& \Pi^n_{\dr}:\quad {\mathcal Var}^{**}_K\to V_{\dr}^K,\quad  \Pi^n_{\dr}:=\R_{\dr}\Pi^n.
\end{align*}
These realizations are compatible with change of the index $n$ and with the structure maps that endow these realizations with Hopf algebra  structures.

From Constructions \ref{constr11},\ref{constr2} (again dualizing) we obtain also the following comparison isomorphisms.
\begin{corollary}
\begin{enumerate}
\item There exists the Hyodo-Kato natural equivalence
$$
\iota_{\dr}:\quad \Pi^n_{\hk}(X;x_1,x_2)\otimes_{K^{\nr}_0}\overline K\simeq \Pi^n_{\dr}(X;x_1,x_2)\otimes_K\overline K.
$$
\item There exists a natural equivalence (potentially semistable period isomorphism)
$$
\rho_{\pst}:\quad \Pi^n_{\hk}(X;x_1,x_2)\otimes_{K^{\nr}_0}\B_{\st}\simeq \Pi^n_{\eet}(X;x_1,x_2)\otimes_{\Q_p}\B_{\st}$$
that is compatible with Galois action, Frobenius, the monodromy operator. Extending to  $\B_{\dr}$ and using the Hyodo-Kato equivalence, we get the de Rham period isomorphism
$$
\rho_{\dr}:\quad \Pi^n_{\dr}(X;x_1,x_2)\otimes_{K}\B_{\dr}\simeq \Pi^n_{\eet}(X;x_1,x_2)\otimes_{\Q_p}\B_{\dr}
$$
that is compatible with  filtrations. 
\end{enumerate}
These comparison isomorphisms are compatible with change of the index $n$ and with Hopf algebra  structures.

\end{corollary}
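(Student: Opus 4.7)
The plan is to reduce everything to the comparison theorems for Nori's motives established in Constructions \ref{constr11} and \ref{constr2} and the Hyodo-Kato natural equivalence (\ref{hk2}), applied to the pro-motive $\Pi^n(X;x_1,x_2)$ furnished by Cushman's theorem. First I would recall that, by definition,
$$
\Pi^n_\epsilon(X;x_1,x_2)\;=\;\R_\epsilon\bigl(\Pi^n(X;x_1,x_2)\bigr), \qquad \epsilon\in\{\eet,\hk,\dr,DF_K\},
$$
where the functors $\R_\epsilon$ are the (dualized) Nori realization functors of Constructions \ref{constr1} and \ref{constr11}, obtained from the diagrammatic functors on $\Delta^{\eff}_g$ via the universal property (Proposition \ref{Nori1}). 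Since these realizations are exact and monoidal, they automatically extend to pro-objects, and in particular to the system $\{\Pi^n(X;x_1,x_2)\}_n$ produced by Cushman's theorem.

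Next I would simply invoke the comparison statements already proved at the level of (pro-)Nori motives. For any $M\in\MM(K)_{\Qp}$, the corollary at the end of Construction \ref{constr2} gives the natural equivalence
$$
\rho_{\pst}:\quad \R_{\hk}(M)\otimes_{K_0^{\nr}}\B_{\st}\;\simeq\;\R_{\eet}(M)\otimes_{\Qp}\B_{\st}
$$
compatible with $\phi$, $N$ and $G_K$, and after extension to $\B_{\dr}$ via the Hyodo-Kato natural equivalence (\ref{hk2}) the de Rham period isomorphism $\rho_{\dr}$, compatible with filtrations. Applying this to (the dual of) $\Pi^n(X;x_1,x_2)$ and unpacking the definitions of $\Pi^n_\eet$, $\Pi^n_\hk$, $\Pi^n_\dr$ yields the three comparison isomorphisms of the corollary.

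It remains to verify the compatibility with change of $n$ and with the Hopf algebra structures. The first is immediate: the transition morphisms $\Pi^{n+1}\to\Pi^n$ are morphisms in $\HMMe(K)_{\Qp}$ by Cushman's theorem, so the naturality of $\rho_{\pst}$, $\iota_{\dr}$ and $\rho_{\dr}$ as equivalences between functors on $\MM(K)_{\Qp}$ (resp.\ $\HMMe(K)_{\Qp}$ after dualizing) forces the required compatibility. For the Hopf structure, the product, coproduct, and path-reversal maps of Cushman's theorem lie in $\HMMe(K)_{\Qp}$; because each $\R_\epsilon$ is monoidal and $\rho_{\pst}$, $\iota_{\dr}$, $\rho_{\dr}$ are natural equivalences of \emph{monoidal} functors (they commute with tensor products by construction), they intertwine the tensor products of the realized structure maps, which gives the stated compatibility.

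The main (minor) obstacle is bookkeeping: Cushman's theorem lives on the homological side $\HMMe(K)_{\Qp}$, while Constructions \ref{constr11} and \ref{constr2} are phrased on the cohomological side $\MMe(K)_{\Qp}$. This is handled by the duality equivalence $\vee$ recalled at the beginning of the subsection, which matches the representations $H_*$ and $H^*$ and exchanges the direction of functorial and boundary edges. Once this identification is made, there is no further content beyond the already-established comparison isomorphisms for Nori's motives.
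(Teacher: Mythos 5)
Your proposal is correct and follows essentially the same route as the paper, which itself offers no further argument beyond the remark that the corollary follows from Constructions \ref{constr11} and \ref{constr2} by dualizing: you apply the already-established natural equivalences $\iota_{\dr}$, $\rho_{\pst}$, $\rho_{\dr}$ for Nori's motives to the (pro-)motive $\Pi^n(X;x_1,x_2)$, using the duality between $\HMMe(K)_{\Qp}$ and $\MM(K)_{\Qp}$, with naturality giving compatibility in $n$ and monoidality giving compatibility with the Hopf structure. Your write-up simply makes explicit the bookkeeping the paper leaves implicit.
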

The above comparison statements were proved before in the case of curves in  \cite{Ha}, \cite{AIK},  for varieties with good reduction over slightly ramified base in \cite{Vo}, and  for varieties with good reduction over an unramified base in  \cite{Ol}. The various realizations appearing in these constructions should be naturally isomorphic with ours but we did not check it.

\subsection{Example II: $p$-adic comparison maps with compact support and compatibilities}

\begin{num}\label{num:real_functoriality}
When $\epsilon=\hk, \eet, \dr, {DF_K}, \pst$, we get from the
 preceding section, for any $K$-variety, a complex
$$
\R\Gamma_\epsilon(X):=
 \R\Gamma_\epsilon(M(X)^\vee)=\R\Gamma_\epsilon(M(X))^*
$$
which computes the $\epsilon$-cohomology with enriched
 coefficients. When $\epsilon=\eet, \hk, \dr$ this is the usual complex,
 respectively, of Galois representations, $(\phi,N,G_K)$-modules,
 filtered $K$-vector spaces which computes, respectively, geometric \'etale
 cohomology, Hyodo-Kato cohomology and De Rham cohomology with
 their natural algebraic structures.
 These complexes are related by the comparison isomorphisms
 $\rho_{\dr}$, $\rho_{\pst}$, and $\iota_{\dr}$.

An interesting point is that these complexes, as well as the comparison
 isomorphisms are contravariantly functorial in the homological motive
 $M(X)$.
 Recall Voevodsky's motives are equipped with special covariant
 functorialities.

Let $X$ and $Y$ be $K$-varieties. A \emph{finite correspondence}
 $\alpha$
 from $X$ to $Y$ is an algebraic cycle in $X \times_K Y$
 whose support is finite equidimensional over $X$ and which is
 \emph{special} over $X$ in the sense of
 \cite[8.1.28]{CD3}.\footnote{If $X$ is geometrically unibranch,
  every $\alpha$ whose support is finite equidimensional over $X$
        is special (cf. \cite[8.3.27]{CD3}).
        If $Z$ is a closed subset of $X \times_K Y$
        which is flat and finite over $X$, the cycle associated with $Z$
        is a finite correspondence (cf. \cite[8.1.31]{CD3}).}
        Then by definition, $\alpha$ induces a map $\alpha_*:M(X) \rightarrow M(Y)$.
 
Assume now that $X$ and $Y$ are smooth. Let $f:X \rightarrow Y$ be any
 morphism of schemes of constant relative dimension $d$.
 Then we have the Gysin maps $f^*:M(Y) \rightarrow M(X)(d)[2d]$
  (cf. \cite{Deg6}).
\end{num}
\begin{corollary}
Consider the notations above.

Then $\R\Gamma_\epsilon(X)$ is contravariant with respect to finite
 correspondences and covariant with respect to morphisms of smooth
 $K$-varieties.

Moreover, the comparison isomorphisms  $\rho_{\dr}$, $\rho_{\pst}$, $\iota_{\dr}$
 are natural with respect to these functorialities.
\end{corollary}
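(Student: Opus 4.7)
The plan is to deduce both assertions from the fact that, for $\epsilon\in\{\hk,\eet,\dr,DF_K,\pst\}$, the realization $\R\Gamma_\epsilon$ is a triangulated (monoidal) functor on $\DMgm(K,\Qp)$ and that the comparison maps $\rho_{\dr}$, $\rho_{\pst}$, $\iota_{\dr}$ are natural transformations between appropriate composites of such functors.

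First I would record the functoriality of Voevodsky's motives recalled in \ref{num:real_functoriality}: the assignment $X \mapsto M(X)$ is by construction covariant with respect to morphisms of $K$-varieties and, more generally, with respect to finite correspondences (cf.\ \cite{CD3}), and for a morphism $f\colon X \to Y$ of smooth $K$-varieties of constant relative dimension $d$ one has the Gysin morphism $f^*\colon M(Y) \to M(X)(d)[2d]$ (cf.\ \cite{Deg6}). Since $\DMgm(K,\Qp)$ is rigid, dualizing these morphisms inside the monoidal category yields a contravariant action of finite correspondences on $M(X)^\vee$ and a covariant Gysin map $M(X)^\vee(-d)[-2d] \to M(Y)^\vee$. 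Applying the triangulated functor $\R\Gamma_\epsilon$ to each of these morphisms produces the functorialities of $\R\Gamma_\epsilon(X) = \R\Gamma_\epsilon(M(X)^\vee)$ claimed in the statement.

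For the naturality of the comparison isomorphisms, the key point is that $\rho_{\dr}$, $\rho_{\pst}$, and $\iota_{\dr}$ are not merely pointwise quasi-isomorphisms but natural transformations of (composites of) triangulated functors $\DMgm(K,\Qp) \to D^b(-)$, where the target uses the appropriate change-of-ring functors $-\otimes\B_{\dr}$ or $-\otimes\B_{\st}$. This follows from the identification \eqref{eq:real_Voevodsy&Nori}, $\R\Gamma_\epsilon = \R_\epsilon \circ \Gamma$: the comparison morphisms on Nori motives were constructed in Construction~\ref{constr2} by extending, via the universal property of Proposition~\ref{Nori1}, the representation $(X,Y,i) \mapsto (H^i_{DF}(X,Y),H^i(X_{\ovk},Y_{\ovk},\Qp),\rho_{\pst})$ into a single exact functor $\R_{\srr}\colon \MM(K)_{\Qp} \to \srr(K)$; this automatically makes each comparison map a natural transformation of exact functors of abelian categories. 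Deriving and precomposing with $\Gamma$ turns these into natural transformations on $\DMgm(K,\Qp)$, and naturality with respect to any morphism --- in particular those induced by finite correspondences and Gysin maps --- is then formal.

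The one nontrivial piece of bookkeeping, which I expect to be the main obstacle, is precisely this promotion of each comparison map from a pointwise quasi-isomorphism to a genuine natural transformation of triangulated functors on $\DMgm(K,\Qp)$. Once this promotion is in place, both statements of the corollary are immediate: functoriality comes from applying a functor, and compatibility of the comparison maps with this functoriality is the definition of a natural transformation. The verification itself reduces to checking that the packaging of all $p$-adic cohomology data into the single functor $\R_{\srr}$ (and its Hyodo-Kato refinement) passes through the derived category unchanged, which is routine because all involved realizations are exact.
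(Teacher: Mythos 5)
Your proposal is correct and follows the same route the paper takes: the paper deduces the functorialities by applying $\R\Gamma_\epsilon$ to the dual of the maps $\alpha_*\colon M(X)\to M(Y)$ and $f^*\colon M(Y)\to M(X)(d)[2d]$ recalled in \ref{num:real_functoriality}, and the naturality of $\rho_{\dr}$, $\rho_{\pst}$, $\iota_{\dr}$ is exactly the fact, established in \ref{num:real_Voevodsky} via $\R\Gamma_\epsilon=\R_\epsilon\circ\Gamma$ and the Nori-motive constructions, that they are natural transformations of triangulated functors on $\DMgm(K,\Qp)$. Your identification of the ``promotion to a natural transformation'' as the only real content is accurate, and your appeal to Construction \ref{constr2} and Proposition \ref{prop:basic_realization} is how the paper handles it.
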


\begin{remark}
\begin{enumerate}
\item Note in particular that covariance with respect
 to finite correspondences implies the existence of transfer maps
 $f_*$ for any finite equidimensional morphism $f:X \rightarrow Y$ which
 is special (eg. flat, or $X$ is geometrically unibranch).
\item The syntomic descent spectral sequence and the syntomic
 period map of Example \ref{ex:syntomic_ssp_DM} are natural with
 respect to the functorialities of the corollary.
\item We can deduce from \cite{Deg6} the usual good properties
 of covariant funtoriality (compatibility with composition, projection
  formulas, excess of intersection formulas,...)
\end{enumerate}
\end{remark}

\begin{num}\label{num:real_product}
 \textit{Products}. Consider again the notations
 of the Paragraph \ref{num:real_functoriality}.
 As said previously, 
  from the K\"unneth formula, $\R\Gamma_\epsilon$ is a monoidal functor
         and the comparison isomorphisms are isomorphisms of monoidal functors.

Consider a $K$-variety $X$ with structural morphism $f$.
 Recall from Remark \ref{rem:motives_singular} that
 $M(X)^\vee=f_*(\un_X)$. The functor $f_*$ is left adjoint to a 
 monoidal functor. Therefore it is weakly monoidal and we get a pairing:
$$
\mu:\quad M(X)^\vee \otimes M(X)^\vee=f_*(\un_X) \otimes f_*(\un_X)
 \rightarrow f_*(\un_X)=M(X)^\vee
$$
in $\DMgm(K,\Qp)$.
This induces a cup-product on the $\epsilon$-complexes:
\begin{equation}\label{eq:cup-product}
\begin{split}
\R\Gamma_\epsilon(X) \otimes \R\Gamma_\epsilon(X)
 =\R\Gamma_\epsilon\big(f_*(\un_X)) \otimes \R\Gamma_\epsilon(f_*(\un_X)\big)
 \stackrel{\textit{K}} \simeq
 \R\Gamma_\epsilon(f_*(\un_X) \otimes f_*(\un_X))
 \xrightarrow{\mu_*} &\R\Gamma_\epsilon(f_*(\un_X) \\
 &=\R\Gamma_\epsilon(X),
\end{split}
\end{equation}
where the isomorphism labelled $K$ stands for the structural morphism
 of the monoidal functor $\R\Gamma_\epsilon$ -- and corresponds to
 the K\"unneth formula in $\epsilon$-cohomology. When
 $\epsilon=\eet, \hk, \dr$, we deduce from the definition of this
 structural isomorphism that these products correspond to the natural
 products on the respective cohomology.
As the comparison isomorphisms are isomorphisms of monoidal functors,
 we deduce that they are compatible with the above  cup-products.

From the end of Example \ref{ex:syntomic_ssp_DM},
 we can also define the $\epsilon$-complex of $X$ with compact support:
$$
\R\Gamma_{\epsilon,c}(X)=\R\Gamma_{\epsilon}(f_!(\un_X)).
$$
Because we have a natural map $f_* \rightarrow f_!$ of functors
 (\cite[2.4.50(2)]{CD3}), we also deduce, as usual,  a natural map:
$$
\R\Gamma_{\epsilon,c}(X) \rightarrow \R\Gamma_{\epsilon}(X).
$$
From the 6 functors formalism, we get a pairing in $\DMgm(K,\Qp)$:
$$
\mu_c:\quad f_*(\un_X) \otimes f_!(\un_X) \stackrel{(1)}\simeq
 f_!(f^*f_*(\un_X) \otimes \un_X)=f_!(f^*f_*(\un_X))
 \xrightarrow{(2)} f_!(\un_X)
$$
where the isomorphism (1) stands for the projection formula
 (\cite[2.4.50(5)]{CD3}) and the map (2) is the unit map
 of the adjunction $(f^*,f_*)$. Then, using $\mu_c$ instead
 of $\mu$ in formula \eqref{eq:cup-product}, we get the 
 pairing between cohomology and cohomology with compact support:
\begin{equation}\label{eq:pairing_c}
\R\Gamma_\epsilon(X) \otimes \R\Gamma_{\epsilon,c}(X)
 \rightarrow \R\Gamma_{\epsilon,c}(X).
\end{equation}
Using again the fact that the comparison isomorphisms
 $\rho_{\dr}$, $\rho_{\pst}$, $\iota_{\dr}$ are isomorphisms
 of monoidal functor,
 we deduce that they are compatible with this pairing.
 Let us summarize:
\end{num}
\begin{proposition}
For $*=\emptyset, c$, we have comparison isomorphisms
\begin{align*}
\iota_{\hk,*}:\quad & \R\Gamma_{\hk,*}(X)\otimes_{K_0^{\nr}}\ovk\simeq \R\Gamma_{\dr,*}(X)\otimes_K\ovk, 
\\ \rho_{\pst,*}:\quad &  \R\Gamma_{\hk,*}(X)\otimes_{K_0^{\nr}}\B_{\st}\simeq \R\Gamma_{\eet,*}(X_{\ovk})\otimes_{\Qp}\B_{\st}, \\
\rho_{\dr,*}:\quad &  \R\Gamma_{\dr,*}(X)\otimes_{K}\B_{\dr}\simeq \R\Gamma_{\eet,*}(X_{\ovk})\otimes_{\Qp}\B_{\dr}, 
\end{align*}
that are  compatible with cup-products \eqref{eq:cup-product}
 and with the pairing \eqref{eq:pairing_c}.
\end{proposition}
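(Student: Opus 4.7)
The plan is to derive the proposition formally from what was set up in \ref{num:real_Voevodsky}: each realization $\R\Gamma_\epsilon$ ($\epsilon\in\{\hk,\dr,\eet,\pst\}$) is a triangulated \emph{monoidal} functor on $\DMgm(K,\Qp)$, and the three comparisons $\iota_{\dr},\rho_{\pst},\rho_{\dr}$ are — after the appropriate scalar extension to $\ovk,\B_{\st},\B_{\dr}$ — natural isomorphisms \emph{of monoidal functors}. Once this is granted, the proposition will be a matter of unwinding definitions.

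First I would recall, following Remark \ref{rem:motives_singular}, that for $f:X\to\Spec(K)$ the structural morphism one has $M(X)^\vee=f_*(\un_X)$ and $M^c(X)^\vee=f_!(\un_X)$ in $\DMgm(K,\Qp)$, and that the multiplications
\[
\mu:f_*(\un_X)\otimes f_*(\un_X)\to f_*(\un_X),\qquad \mu_c:f_*(\un_X)\otimes f_!(\un_X)\to f_!(\un_X)
\]
live already in the motivic category. Applying any monoidal realization $\R\Gamma_\epsilon$ and composing with its structural K\"unneth isomorphism yields, by the definitions in Paragraph \ref{num:real_product}, the pairings \eqref{eq:cup-product} and \eqref{eq:pairing_c} on the $\epsilon$-side. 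Given a monoidal natural isomorphism $\eta:\R\Gamma_{\epsilon_1}\Rightarrow\R\Gamma_{\epsilon_2}$ (after the relevant scalar extension), compatibility of $\eta$ with these pairings is then automatic: it is naturality of $\eta$ applied to the morphisms $\mu,\mu_c$, together with the hexagon expressing that $\eta$ intertwines the K\"unneth isomorphisms on source and target.

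The hard part — and the only non-formal step — will be to establish that $\iota_{\dr},\rho_{\pst},\rho_{\dr}$ really are morphisms of \emph{monoidal} functors, not merely pointwise quasi-isomorphisms of complexes. For this I would use the factorization $\R\Gamma_\epsilon(M)=\R_\epsilon(\Gamma(M))$ of \eqref{eq:real_Voevodsy&Nori} together with the fact that Nori's realization $\Gamma$ is monoidal, reducing the question to monoidality of the corresponding natural transformations between the realizations $\R_\epsilon$ on Nori motives. That in turn I would extract from Construction \ref{constr2}: the realization $\R_{\srr}:\MM(K)_{\Qp}\to\srr(K)$ is produced by the universal property of Proposition \ref{Nori1} from a representation valued in a Tannakian (and hence tensor) category, so it is automatically a tensor functor, and the faithful exact projections $\srr(K)\to DF_K,\Rep_{\pst}(G_K)$ are monoidal; hence the period natural transformation $\rho_{\pst}:V_{\pst}\R_{DF_K}\simeq\R_{\pst}$ is monoidal, and analogously for $\rho_{\dr}$ and $\iota_{\dr}$. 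At the base level of a single good pair, the K\"unneth compatibility of the underlying classical period isomorphisms that feed into this universal construction is provided by Lemma \ref{lm:kunneth_abs_synt} together with the standard K\"unneth formulas for Beilinson's Hyodo-Kato and de Rham cohomology.
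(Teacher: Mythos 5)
Your proposal is correct and follows essentially the same route as the paper: the proposition is obtained there exactly by observing that the pairings \eqref{eq:cup-product} and \eqref{eq:pairing_c} are images of the motivic morphisms $\mu$ and $\mu_c$ under the monoidal realizations $\R\Gamma_\epsilon$, and that $\iota_{\dr}$, $\rho_{\pst}$, $\rho_{\dr}$ are isomorphisms of monoidal functors, so compatibility is formal. Your extra elaboration of the monoidality of the comparison transformations via the Nori-motivic factorization \eqref{eq:real_Voevodsy&Nori} and Construction \ref{constr2} is consistent with what the paper takes as already established in Paragraph \ref{num:real_Voevodsky}.
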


\section{Syntomic modules}\label{sec:modules}

\subsection{Definition}
In this section we use the dg-algebra $\synspx K$, which represents syntomic cohomology of varieties over $K$ \cite[Appendix]{NN} to define a category of syntomic modules over any such variety. This is our candidate for coefficients systems (of geometric origin) for syntomic cohomology. We prove that in the case of $\Spec K$ itself the category of syntomic coefficients  is (via the period map) a subcategory of potentially semistable representations that is closed under extensions. We call such representations  {\em constructible  representations}.

    Let us first recall the setting of Voevodsky's $h$-motives,
 with coefficients in a given ring $R$ and over any
 noetherian base scheme $S$.
 We let $\Sh(S,R)$ be the category of h-sheaves of $R$-modules
 on $\Sch_S$ -- the category of separated schemes of finite type over $S$. This is a monoidal Grothendieck abelian category
 with generators the free $R$-linear $h$-sheaves represented by any $X$ in $\Sch_S$; we denote them by $R_S^h(X)$.
 In particular, its derived category $\mathcal D(\Sh(S,R))$
 has a canonical structure of a stable monoidal $\infty$-category
 in the sense of \cite[Def. 3.5]{Rob}
 (see also \cite{Lur2}).\footnote{Actually,
 this follows from the existence of a closed monoidal category
 structure on the category of complexes of $\Sh(S,R)$ (cf. \cite{CD1}
 or \cite{CD4}) and from \cite[Sec. 3.9.1]{Rob}.}
 Moreover, it admits infinite direct sums.
Let us define the Tate object as the following
 complex of $R$-sheaves: $R_S(1):=R^h_S(\PP^1_S)/R^h_S(\{\infty\})[-2]$. \\

   The following theorem is an $\infty$-categorical summary
 of a classical construction phrased in terms of model
 categories in \cite{CD4}:
\begin{theorem}\label{thm:exists_DM}
There exists a universal monoidal $\infty$-category 
 $\underline{\mathcal DM_h}(S,R)$ which admits infinite
 direct sums and is equipped with a monoidal $\infty$-functor
$$
\Sigma^\infty:\quad \mathcal D(\Sh(S,R))
 \rightarrow \underline{\mathcal DM}_h(S,R)
$$
such that:
\begin{itemize}
\item \textit{$\AA^1$-Homotopy}: for any scheme $X$ in $\Sch_S$, the induced map
 $\Sigma^\infty R^h_S(\AA^1_X) \rightarrow \Sigma^\infty R^h_S(X)$ is 
 an isomorphism;
\item \textit{$\PP^1$-stability}: the object $\Sigma^\infty R_S(1)$ is $\otimes$-invertible.
\end{itemize}
Moreover, the monoidal $\infty$-category $\underline{\mathcal DM}_h(S,R)$
 is stable and presentable.
\end{theorem}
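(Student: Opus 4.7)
The plan is to follow Morel--Voevodsky's classical construction, phrased in the language of presentable symmetric monoidal $\infty$-categories as in \cite{Rob} and \cite{Lur2}. The starting point is that $\mathcal D(\Sh(S,R))$ is the underlying $\infty$-category of a combinatorial simplicial symmetric monoidal model category on complexes of $h$-sheaves (as constructed in \cite{CD4}), and is therefore a stable, presentable, symmetric monoidal $\infty$-category admitting all small colimits. Write $\mathcal{C}_0$ for this $\infty$-category, and regard $R_S^h(X)$ and $R_S(1)$ as objects of $\mathcal{C}_0$.

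First, I would perform the $\AA^1$-localization. Consider the set $W_{\AA^1}$ of morphisms $R^h_S(\AA^1_X)\to R^h_S(X)$ for $X\in\Sch_S$. Since $\AA^1_X\times_S Y=\AA^1_{X\times_S Y}$, this set is stable under tensor product with the generators, so $W_{\AA^1}$ is a monoidal class in the sense needed for monoidal Bousfield localization. Hence the localization $\mathcal{C}_1:=\mathcal D_{\AA^1,h}(S,R)$ of $\mathcal{C}_0$ at $W_{\AA^1}$ inherits a symmetric monoidal structure, remains stable and presentable, and the canonical functor $\mathcal{C}_0\to \mathcal{C}_1$ is symmetric monoidal and preserves colimits.

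Second, I would invert $R_S(1)$ using the symmetric monoidal $\otimes$-inversion machinery of \cite[\S 2, \S 4]{Rob}. That machinery applies whenever the object one wishes to invert is \emph{symmetric}: the cyclic permutation on its threefold tensor product must be homotopic to the identity. For the Tate object $R_S(1)$, this follows from Voevodsky's classical computation that the cyclic permutation on $\mathbb G_m^{\wedge 3}$ is $\AA^1$-homotopic to the identity after a single suspension; this transfers directly to $\mathcal{C}_1$. Robalo's theorem then produces a universal symmetric monoidal $\infty$-functor $\mathcal{C}_1\to \mathcal{C}_1[R_S(1)^{-1}]$ inverting $R_S(1)$, preserving stability, presentability, and the existence of arbitrary direct sums. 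Setting $\underline{\mathcal{DM}}_h(S,R):=\mathcal{C}_1[R_S(1)^{-1}]$ and $\Sigma^\infty$ to be the composition with the $\AA^1$-localization gives the required object.

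The universal property of $\Sigma^\infty$ asserted in the statement follows by concatenating the universal properties of the two localizations above. The main technical point is to verify that Robalo's hypotheses are satisfied in our setting: one must check that $W_{\AA^1}$ is stable under tensor products with generators (which is immediate), and that the Tate object $R_S(1)$ satisfies the cyclic symmetry condition inside $\mathcal{C}_1$ (which reduces to the classical Voevodsky computation recalled above). Once these two points are in place, both $\AA^1$-invariance and $\PP^1$-stability are built into the construction, and stability plus presentability are preserved at each step.
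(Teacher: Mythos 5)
Your proposal is correct and follows essentially the same route as the paper: the authors likewise obtain the $\AA^1$-homotopy property from localization of monoidal $\infty$-categories and the $\PP^1$-stability from Robalo's $\otimes$-inversion theorem \cite[4.16]{Rob} together with the fact that the cyclic permutation on $R_S(1)^{\otimes 3}$ is the identity up to $\AA^1$-homotopy, all resting on the model-categorical construction of \cite{CD4}. Your write-up simply spells out in more detail what the paper states in two sentences.
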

Concerning the first point, the statement follows from
 the existence of localization for monoidal $\infty$-categories.
 The statement for the second point follows from \cite[4.16]{Rob}
 and the fact that, up to $\AA^1$-homotopy, the cyclic permutation
 on $R_S(1)^{\otimes,3}$ is the identity.

\begin{remark}\label{rem:DM=graded_presheaves}
According to \cite{CD4} and \cite[4.29]{Rob},
 the $\infty$-category $\underline{\mathcal DM}_h(S,R)$
 is associated with an underlying symmetric monoidal model category
 -- this also implies it can be described by a  canonical $R$-linear
 dg-category.  According to the description of this model category, 
 up to quasi-isomorphism, the objects of $\underline{\mathcal DM_h}(S,R)$
 can be understood as $\mathbb N$-graded complexes of $R$-linear
 $h$-sheaves $(\mathcal E_r)_{r \in \mathbb N}$ which satisfy
 the following properties:
\begin{itemize}
% \item (\emph{$h$-descent}) for any $h$-hypercover
%  $W_\bullet \rightarrow X$ and $r \in \bz$,
%  the canonical map $\mathcal E_r(X) \rightarrow
%   \operatorname{Tot}(\mathcal E_r(W_\bullet))$
%       is a quasi-isomorphism;
\item (\emph{Homotopy invariance}) for any integer $r$,
 the $h$-cohomology presheaves $H^*_h(-,\mathcal E_r)$ are
 $\AA^1$-invariant;
\item (\emph{Tate twist}) there exists a (structural) quasi-isomorphism
 $\mathcal E_r \rightarrow \uHom(R_S(1),\mathcal E_{r+1})$.
\end{itemize}
One should be careful however that,
 in order to get the right \emph{symmetric} monoidal structure
 on the underlying model category, one has to consider in addition
 an action of the symmetric group of order $r$ on $\mathcal E_r$,
 in a way compatible with the structural isomorphism associated
 with Tate twists. The corresponding objects are called
 \emph{symmetric Tate spectra}.\footnote{See \cite[Sec. 5.3]{CD3} for the 
 construction in motivic homotopy theory.}
\end{remark}

\begin{example}\label{ex:synt_sp}
Let $S=\Spec(K)$ and $R=\Qp$.
Consider the $h$-sheaf associated with the presheaf of dg-$\Qp$-algebras
$$
X \mapsto (\R\Gamma_{\synt}(X_h,r) \simeq \R\Gamma_{\synt}(X,r))
$$
defined in \ref{def1} (see Theorem \ref{compsynpH} for the isomorphism).
Because of \cite{NN},
 it satisfies the homotopy invariance and Tate twist properties
 stated above;
 thus as explained in Appendix B of \cite{NN}, 
 it canonically defines an object $\mathcal E_{syn}$
 of $\underline{\mathcal DM}_h(K,\Qp)$.
 Moreover, the dg-structure allows to get a canonical ring structure
 on this object, which corresponds to a strict structure
 (the diagrams encoding commutativity and associativity are commutative
  not only up to homotopy).
\end{example}

For any scheme $X$ in $\Sch_S$,
 we put $M_S(X):=\Sigma^\infty R_S^h(X)$, called the (homological)
 $h$-motive associated with $X/S$. 
\begin{definition}\label{df:DMh}
We define the stable monoidal $\infty$-category of $h$-motives
 $\mathcal DM_h(S,R)$
 (resp. constructible $h$-motives $\mathcal DM_{h,c}(S,R)$)
 over $S$
 with coefficients in $R$ as the sub-$\infty$-category 
 of $\underline{\mathcal DM}_h(S,R)$, spanned by infinite direct sums
 of objects of the form $M_S(X)(n)[i]$
 (resp. objects of the form $M_S(X)(n)[i]$)
 for a smooth $S$-scheme $X$ and integers $(n,i) \in \bz^2$.

We let $DM_h(S,R)$ (resp. $DM_{h,c}(S,R)$) be the
 associated homotopy category, as a triangulated monoidal
 category.
\end{definition}

\begin{example}\label{ex:DMh_comparison}
When $R$ is a $\bq$-algebra
 (resp. $R$ is a $\bz/n$-algebra where $n$ is invertible on $S$),
 $DM_h(S,R)$ is equivalent to the triangulated monoidal category
 of rational mixed motives (resp. derived category of $R$-sheaves
 on the small site \'etale of $S$): see \cite{CD4}, Th. 5.2.2 (resp. Cor. 5.4.4).
 In particular, $\mathcal DM_h(S,R)$
 is presentable by a monoidal model category.
\end{example}

The justification of the axioms of $\AA^1$-homotopy and $\PP^1$-stability
 added to the derived category of $h$-sheaves comes from the following theorem:
\begin{theorem}[\cite{CD4}]
The triangulated categories $DM_h(S,R)$ for various schemes $S$ are
 equipped with Grothendieck 6 functors formalism and satisfy
  the absolute purity property.
 If one restricts to quasi-excellent schemes $S$
 and morphisms of finite type,
 the subcategories $DM_{h,c}(S,R)$ are stable under the 6 operations,
 and satisfy Grothendieck-Verdier duality.
\end{theorem}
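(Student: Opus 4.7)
The theorem is a summary of the main results of Cisinski--D\'eglise \cite{CD4}. My plan is to verify that the assignment $S \mapsto \mathcal{DM}_h(S,R)$, together with the adjoint pairs $(f^*,f_*)$ obtained from the underlying model structure on $h$-sheaves (cf. Remark \ref{rem:DM=graded_presheaves}), forms a \emph{motivic triangulated category} in the sense of \cite[Def. 2.4.45]{CD3}, from which the full six functor formalism is produced by the general machinery developed there. The axioms of $\AA^1$-homotopy invariance and $\PP^1$-stability are built into the construction of $\mathcal{DM}_h$, and smooth base change is automatic since it already holds at the level of $h$-sheaves. The substantial remaining point is the \emph{localization property}: for a closed immersion $i \colon Z \hookrightarrow S$ with open complement $j \colon U \hookrightarrow S$, the triangle $j_!j^* \to \id \to i_*i^*$ must be distinguished; this is obtained by combining $h$-descent with the standard blow-up argument.

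Once localization is in place, the functor pair $(f_!,f^!)$ is constructed through Nagata compactifications, and the relative purity isomorphism $f^! \simeq f^*(d)[2d]$ for $f$ smooth of relative dimension $d$ follows from $\PP^1$-stability by the Morel--Voevodsky argument. Absolute purity for arbitrary closed immersions is handled separately in the rational and torsion cases: with rational coefficients one reduces via Morel's plus/minus decomposition to Voevodsky's motivic category and invokes the purity theorem of \cite{CD3}, while with torsion coefficients prime to the residue characteristics Example \ref{ex:DMh_comparison} identifies $\mathcal{DM}_h(S,R)$ with the derived category of \'etale sheaves, where absolute purity reduces to Gabber's theorem.

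For the constructibility statement we need that each of the six operations preserves $\mathcal{DM}_{h,c}(S,R)$, the only non-formal case being stability of $f_*$ for $f$ of finite type between quasi-excellent schemes. I would argue by Noetherian induction on the base: using localization one reduces to $f$ either proper (where $f_* = f_!$ preserves constructibility by the compactification construction) or generically smooth (where relative purity gives the conclusion), the reduction relying on de Jong alterations. Grothendieck--Verdier duality then follows by constructing a dualizing object $K_S := p^!(\un)$ for a smooth structural morphism $p \colon S \to \Spec(\bz)$, extended by continuity to general quasi-excellent bases, and checking that biduality $M \to \uHom(\uHom(M,K_S),K_S)$ is an isomorphism on the generators $M_S(X)(n)$ via absolute purity. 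The main obstacle is the interplay between localization and the finiteness of $f_*$: the first requires nontrivial $h$-descent, while the second uses de Jong alterations in a way that is delicate to propagate uniformly across the quasi-excellent setting --- both being handled at length in \cite{CD4}.
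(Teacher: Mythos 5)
This theorem is not proved in the paper: it is quoted directly from Cisinski--D\'eglise \cite{CD4} (with the formalism itself only summarized, via the pointer to \cite[A.5]{CD3} and \cite[Appendix A]{CD4}), so there is no internal argument to measure your proposal against. That said, your outline is a broadly faithful reconstruction of the strategy of the cited reference: realize $S \mapsto DM_h(S,R)$ as a motivic triangulated category in the sense of \cite{CD3}, with localization being the one non-formal axiom (and the h-topology, which contains proper covers and hence abstract blow-up squares, is precisely what makes it accessible); build $(f_!,f^!)$ by Nagata compactification and relative purity; and split both absolute purity and the finiteness statements into the rational case (Morel's plus/minus decomposition and the comparison with Beilinson motives of \cite{CD3}) and the torsion case (the identification of Example \ref{ex:DMh_comparison} with derived categories of \'etale sheaves, where one invokes Gabber's absolute purity and finiteness theorems), finishing the stability of $f_*$ on constructible objects by noetherian induction.

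Two points in your sketch are inaccurate as stated. First, over general quasi-excellent bases the resolution input is not de Jong's alterations but Gabber's weak local uniformization theorem; de Jong's results only cover schemes of finite type over a field or an excellent Dedekind base, which is insufficient for the generality claimed. Second, your construction of the dualizing object as $p^!(\un)$ for a \emph{smooth} structural morphism $p\colon S \to \Spec(\bz)$ does not make sense for the schemes at issue (e.g.\ $S=\Spec(K)$ with $K/\Qp$ is not even of finite type over $\bz$, let alone smooth), and there is no ``extension by continuity''. In \cite{CD3,CD4} duality is formulated relative to a chosen regular quasi-excellent base $B$: for any separated morphism of finite type $f\colon X \to B$ the object $f^!(\un_B)$ is dualizing on $X$, and biduality is verified on the generators $M_X(Y)(n)$ using absolute purity, as in the last step of your argument.
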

We refer the reader to \cite[A.5]{CD3} of \cite[Appendix A]{CD4} for
 a summary of Grothendieck 6 functors formalism and Grothendieck-Verdier
 duality.

Let us now take the notations of Example \ref{ex:synt_sp}.
 We view $\mathcal E_{syn}$ in the model category
 underlying $\underline{\mathcal DM}_h(K,\Qp)$, equiped
 with its structure of (commutative) dg-algebra.
 According to \cite[7.1.11(d)]{CD3}, one can assume that
 $\mathcal E_{syn}$ is cofibrant (by taking a cofibrant resolution
 in the category of dg-algebras according to \emph{loc. cit.}).
 Given any morphism $f:S \rightarrow \Spec(K)$, we put
$$
\mathcal E_{syn,S}:=\mathrm Lf^*(\mathcal E_{syn})
$$
which is again a dg-algebra because $f^*$ is monoidal.
According to the construction of \cite[Sec. 7.2]{CD3},
 the category $\usMod_S$
 of modules over this dg-algebra is endowed
 with a monoidal model structure, and therefore with
 a structure of monoidal $\infty$-category.
The free $\synsp$-module functor induces an adjunction of
$\infty$-categories:
$$
R_{\synt}:\underline{\mathcal DM}_h(S,\Qp) \leftrightarrows 
 \usMod_S:\mathcal O_{\synt}.
$$
Given any $S$-scheme $X$, and any integer $n \in \Z$,
 we put $\synspx S(X)(n):=R_{\synt}(M_S(X)(n))$ .
\begin{definition}
Using the above notations, we define
 the $\infty$-category of \emph{syntomic modules}
 (resp. \emph{constructible syntomic modules})
 over $S$ as the $\infty$-subcategory of $\usMod_S$ stable under taking  infinite direct sums
 (resp. finite direct sums) and generated by $\synspx S(X)(n)[i]$ for
 a smooth $S$-scheme $X$ and integers $(n,i) \in \Z^2$.
 
We denote it by $\sMod_S$ (resp. $\sMod_{c,S}$)
 and let $\smod_S$ (resp. $\smod_{c,S}$) be its associated homotopy category.
 This is a monoidal triangulated category.
\end{definition}
In particular, we get an adjunction of triangulated categories:
$$
R_{\synt}:DM_h(S,\Qp) \leftrightarrows \smod_S:\mathcal O_{\synt},
$$
such that $R_{\synt}$, called the realization functor, is monoidal
 and sends constructible  motives to constructible syntomic modules.

\begin{remark}\label{rem:smod&compact}
By definition, the triangulated category $\smod_S$ 
 (resp. $DM_h(S,\Qp)$) is generated 
 by the objects of the form $\synspx S(X)(n)$ (resp. $M_S(X)(n)$)
 for a smooth $S$-scheme $X$ and an integer $n \in \Z$.
By construction, the functor $\mathcal O_{\synt}$ commutes
 with arbitrary direct sums. Thus, because $M_S(X)(n)$ is
 compact\footnote{Recall
  an object $M$ of a triangulated category $\mathcal T$ is compact when
  the functor $\Hom_\mathcal T(M,-)$ commutes with arbitrary direct sums.}
 in $DM_h(S,\Qp)$ (see \cite[15.1.4]{CD3}), we deduce that
 $\synspx S(X)(n)$ is compact.
This implies that a syntomic module is constructible
 if and only if it is compact.\footnote{This corresponds to the description
 of perfect complexes of a ring as compact objects of the derived category.}

Note also that $\smod_S$ is a \emph{compactly generated}
 triangulated category.
\end{remark}

Essentially using the previous theorem
 and the good properties of the forgetful functor $\mathcal O_{\synt}$
 -- we get the following result:
\begin{theorem}\label{thm:h-realization}
The triangulated categories $\smod_S$ for various schemes $S$ are
 equipped with Grothen\-dieck 6 functors formalism and satisfy
  the absolute purity property.
 If one restricts to quasi-excellent $K$-schemes $S$
 and morphisms of finite type,
 the subcategories $\smod_{c,S}$ are stable under the 6 operations,
 and satisfy Grothendieck-Verdier duality.

If one restricts to $K$-varieties $S$, the syntomic (pre-)realization
 functors:
$$
R'_{\synt}:\quad DM_{h,c}(S,\Qp) \rightarrow \smod_{c,S},
$$
for various $S$, commute with the 6 operations and in particular with duality.
\end{theorem}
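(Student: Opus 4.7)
The plan is to deduce the theorem from the general formalism of modules over motivic ring spectra developed in Cisinski--Déglise \cite{CD3}, Chapter 7, applied to the dg-algebra $\synsp$ constructed in Example \ref{ex:synt_sp}. The input is the previous theorem, which endows $DM_h(S,\Qp)$ with the 6 functors formalism and absolute purity, together with the key property that $\synsp$ is a \emph{cartesian section} of ring spectra: for any morphism $f:T \to S$ one has $\synspx T \simeq \mathrm Lf^*(\synspx S)$, which is built into the definition $\synspx S:=\mathrm Lf^*(\mathcal E_{\synt})$ and the fact that $\mathcal E_{\synt}$ was chosen cofibrant.

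For the first assertion, I would invoke \cite[Prop.~7.2.11, Thm.~7.2.18]{CD3}, which asserts that the module category of a cartesian cofibrant commutative ring spectrum over a motivic triangulated category inherits the full 6 functors formalism as well as absolute purity. Concretely, $f^*$, $\otimes$ and $\uHom$ are defined strictly at the module level, while $f_*$, $f_!$, $f^!$ are obtained by a direct transfer through the forgetful-free adjunction $(R_{\synt},\mathcal O_{\synt})$ combined with the corresponding operations in $DM_h$; the absolute purity isomorphism in $DM_h$ is sent by $R_{\synt}$ to the required one in $\smod$.

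For the second assertion, Remark \ref{rem:smod&compact} identifies constructibility with compactness and shows $\smod_S$ is compactly generated; since $R_{\synt}$ preserves compact generators and, over quasi-excellent $K$-schemes, the 6 operations preserve constructibility in $DM_{h,c}(-,\Qp)$, the preservation transfers to $\smod_{c,S}$. Grothendieck--Verdier duality follows from the existence and local nature of a dualizing object: one takes $R_{\synt}$ applied to the motivic dualizing object, whose image is a dualizing object in $\smod_{c,S}$ by compatibility of $R_{\synt}$ with $f^!$ (established by the general machinery).

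For the third, most delicate assertion, the commutation of the realization functor with $f^*$ and $\otimes$ is automatic since $R_{\synt}$ is monoidal by construction. The hard part is commutation with $f_!$ (and, by duality, $f^!$, $f_*$): one needs a natural isomorphism $R_{\synt}\circ f_! \simeq f_!\circ R_{\synt}$ on constructibles. This is handled abstractly by \cite[Thm.~7.2.13, Prop.~7.2.14]{CD3}: a base change of motivic categories along a morphism of cofibrant cartesian ring spectra commutes with all six operations provided it preserves compact objects and satisfies the projection formula, both of which hold here because $R_{\synt}$ is left adjoint to the monoidal forgetful functor $\mathcal O_{\synt}$ and because $f_!$ preserves compactness under the quasi-excellence hypothesis. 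The main obstacle is precisely this verification of the projection formula together with proper base change at the module level, but once the cartesianness of $\synsp$ is granted, this reduces to the analogous compatibilities in $DM_h$ supplied by the previous theorem.
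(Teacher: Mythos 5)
Your overall strategy is the same as the paper's: everything is deduced from the Cisinski--D\'eglise machinery of modules over a (cofibrant, cartesian) motivic ring spectrum applied to $\synsp$. For the first assertion the paper cites exactly \cite[Prop. 7.2.18]{CD3}, and for the third it cites \cite[Th. 4.4.25]{CD3}; your description of the mechanism behind the third step (left adjoints commute formally by monoidality, right adjoints by projection formula, compactness preservation and a d\'evissage through duality) is essentially a paraphrase of how that general theorem works, so this part is fine.

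The genuine issue is your treatment of the second assertion. You propose to \emph{transfer} stability of constructibility from $DM_{h,c}(-,\Qp)$ to $\smod_{c,-}$ along $R_{\synt}$. That transfer presupposes that $R_{\synt}$ commutes with all six operations (in particular with $f_*$, $f^!$ and $\uHom$, not just the monoidal left adjoints) -- but that is precisely the third assertion, which is logically downstream and, as stated, is only available for $K$-varieties $S$, whereas the second assertion is claimed for all quasi-excellent $K$-schemes. So your argument is circular and does not cover the stated generality. The paper avoids this by arguing intrinsically in $\smod$: Prop.\ 7.2.18 of \cite{CD3} also yields that the motivic category $\smod$ is \emph{separated}, and since absolute purity is easy in characteristic $0$, $\smod$ is compatible with Tate twists in the sense of \cite[Def. 4.2.20]{CD3}; one then applies \cite[Th. 4.2.29]{CD3} directly to $\smod$ to get stability of $\smod_{c,S}$ under the six operations and Grothendieck--Verdier duality, with no reference to $DM_h$ or to the realization functor. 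You should replace your transfer argument by this intrinsic one (or at least by some argument that does not invoke the commutation of $R_{\synt}$ with the right adjoints).
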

See Corollary \ref{cor:syntomic_real&modules} for
 the computation of this functor over the base field $K$.
\begin{proof}
The first assertion comes from \cite[Prop. 7.2.18]{CD3},
 which also implies that the motivic category $\smod$ is
 separated.\footnote{Recall from \cite[2.1.7]{CD3} that this means the following: for any $K$-schemes $X$, $Y$
 and surjective morphism $f:Y \rightarrow X$ of finite type,
 the base change functor $f^*:\smod_X \rightarrow \smod_Y$
 is conservative.}
 Thus the second assertion comes
 from \cite[Th. 4.2.29]{CD3}.\footnote{As we work over a field of
 characteristic $0$, the absolute purity property is easy to get.
 Thus the premotivic triangulated category $\smod$ is compatible
 with Tate twist in the sense of Def. 4.2.20 of \cite{CD3}.}
 The last assertion is \cite[Th. 4.4.25]{CD3}.
\end{proof}
\begin{remark}
To get a feeling for the category $\smod_{c,S}$ the reader might want to recall a more classical case of coefficients defined by de Rham cohomology. Let $K={\mathbf C}$ be the field of complex numbers; let $\se_{\dr}$ be the commutative ring spectrum representing de Rham cohomology $X\mapsto \R\Gamma_{\dr}(X)$, for varieties $X$ over $K$.  We have 
$$H^n_{\dr}(X)=\R\Hom_{DM_h(K,{\mathbf C})}(M(X),\se_{\dr}[n]).
$$
We can define, in a way analogous to what we have done above, the category of constructible de Rham coefficients $\se_{\dr}-\operatorname{mod}_{c,S}$, for varieties $S$ that are smooth over $K$. By \cite[Example 17.2.22]{CD3} (using the Riemann-Hilbert correspondence) or by \cite[Theorem 3.3.20]{Drew} (more directly, using the isomorphism between Betti and de Rham cohomologies) this category is equivalent to the bounded derived category of analytic regular holonomic $\sd$-modules on $S$ that are constructible, of geometric origin.
\end{remark}
\begin{num}\label{num:duality}
Recall the Grothendieck-Verdier duality property
 means that for any regular $K$-scheme $S$
 and any separated morphism of finite type $f:X \rightarrow S$,
 the syntomic module $M_X=f^!(\synspx S)$ is dualizing for
 the category of constructible syntomic modules over $X$.
In other words, the functor
\begin{equation}\label{eq:duality_synt_module}
D_X:=\uHom(-,M_X):\quad \left(\smod_{c,X}\right)^{op}
 \rightarrow \smod_{c,X}
\end{equation}
is an anti-equivalence of monoidal triangulated categories.
 Moreover, it exchanges usual functors with exceptional functors:
 given any separated morphism of finite type $p:Y \rightarrow X$,
 one has: $D_Yp^*=p^!D_X$ and $D_Xp_*=p_!D_Y$.
\end{num}

\subsection{Comparison theorem}

\begin{num}\label{num:rep&fiber_functor}
Consider the abelian category $\Rep_{pst}(G_K)$ of
 potentially semistable representations  and the coinvariants  functor
$$
\omega_{!}:\quad \Rep_{pst}(G_K) \rightarrow V_{\Qp}^f 
$$
where the right hand side is the category of finite dimensional
 $\Qp$-vector spaces.
 It admits a right adjoint denoted by
 $\omega^!$
 which to a finite dimensional $\Qp$-vector space $V$ associates
 the representation $V$ with trivial action of $G_K$. 
 It is obviously exact and monoidal.
 One could also put $\omega^*=\omega^!$ because it also admits a right
 adjoint $\omega_*$ which to a potentially semistable representation
 $V$ associates the $\Qp$-vector $V^{G_K}$ of $G_K$-invariants.
 The situation can be pictured as follows:
$$
\xymatrix@=40pt{
\Rep_{pst}(G_K)\ar@<5pt>^-{\omega_!}[r]\ar@<-5pt>_-{\omega_*}[r]
 & V_{\Qp}^f.\ar|-{\omega^!=\omega^*}[l]
}
$$

   It will be convenient for what follows to enlarge the category
 $\Rep_{pst}(G_K)$. Consider the category
$$
\Rep_{pst}^\infty(G_K):=\Ind-\Rep_{pst}(G_K)
$$
of ind-objects.
 Thus, for us,
 an infinite potentially semistable representation $V$ will be
 a $\Qp$-vector space $V$ with an action of $G_K$ which is
 a filtering union of sub-$\Qp$-vector spaces stable under the action of  $G_K$
 which are potentially semistable representations of $G_K$.
The category $\Rep_{pst}^\infty(G_K)$ is an abelian (symmetric closed) monoidal
 category which contains $\Rep_{pst}(G_K)$ as a full abelian
 thick subcategory. Moreover, it is a Grothendieck abelian category
 -- it admits infinte direct sums and filtering colimits are exact.
 The above diagram of functors extends to this larger
 category. Note in particular that according to this definition,
 Formula \eqref{def1} can be rewritten:
\begin{equation} \label{def1bis}
V_{\pst}\theta^{-1}:\quad \R\Gamma_{\synt}(X,r)\stackrel{\sim}{\to}\R\omega_* \R\Gamma_{\pst}(X_{\ovk},r).
\end{equation}
\end{num}

Due to the Drew's thesis \cite{Drew} together with our main
 construction (\textsection \ref{num:main_construction}), we get
 the following computation of syntomic modules over $K$:
\begin{theorem}\label{thm:compute_synt_modl}
There exist a canonical pair of adjoints of triangulated categories:
$$
\rho^*:\smod_{K}
 \leftrightarrows \mathrm D\big(\Rep^\infty_{\pst}(G_K)\big):\rho_*
$$
such that $\rho^*$ is monoidal and
 which can be promoted to an adjunction of stable
 $\infty$-categories. Moreover, the functor $\rho^*$ is fully
 faithful and induces by restriction a monoidal fully faithfull
 triangulated functor:
$$
\rho^*:\smod_{c,K}
 \rightarrow \mathrm D^b\big(\Rep_{\pst}(G_K)\big)
$$
such that for any $K$-variety $X$
 with structural morphism $f$, there exists a canonical
 quasi-isomorphism of complexes of $G_K$-representations:
\begin{equation} \label{eq:compute_synt_modl}
\rho^*\left(f_* \synspx X(r)\right)
 \simeq \R\Gamma_{\pst}(X_{\ovk},r).
\end{equation}
\end{theorem}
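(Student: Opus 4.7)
The plan is to combine the framework of Drew \cite{Drew} for realizing modules over a motivic ring spectrum as derived modules in a Tannakian category, with the identification of syntomic cohomology developed in Section 2. The key input is formula (\ref{def1bis}),
\begin{equation*}
\R\Gamma_{\synt}(X,r) \simeq \R\omega_* \R\Gamma_{\pst}(X_{\ovk}, r),
\end{equation*}
which expresses $\synspx K$, as a presheaf of commutative dg $\Qp$-algebras on smooth $K$-schemes, as the composition of the potentially semistable realization with the derived functor $\R\omega_*$ of potentially semistable invariants.

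First I construct $\rho^*$ as follows. The monoidal realization $\R\Gamma_{\pst}\colon DM_{h,c}(K, \Qp) \to \mathrm D^b(\Rep_{\pst}(G_K))$ from Paragraph \ref{num:real_Voevodsky} extends by Ind-completion to a colimit-preserving monoidal functor $\underline{\mathcal DM}_h(K, \Qp) \to \mathcal D(\Rep^\infty_{\pst}(G_K))$, both sides being compactly generated presentable $\infty$-categories. Composing with $\R\omega_*$ recovers the syntomic realization via (\ref{def1bis}), so $\R\Gamma_{\pst}$ is naturally $\synspx K$-linear. By the universal property of $\usMod_K$ as the $\infty$-category of $\synspx K$-modules in $\underline{\mathcal DM}_h(K, \Qp)$, this produces a colimit-preserving monoidal functor
\begin{equation*}
\rho^*\colon\quad \sMod_K \longrightarrow \mathcal D(\Rep^\infty_{\pst}(G_K))
\end{equation*}
satisfying $\rho^* \circ R_{\synt} \simeq \R\Gamma_{\pst}$; the right adjoint $\rho_*$ exists by the presentable adjoint functor theorem, and the adjunction descends to homotopy categories. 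Identification (\ref{eq:compute_synt_modl}) follows by applying $\rho^*$ to $R_{\synt}(f_* \un_X)$, using compatibility of $R_{\synt}$ with the six-functor formalism (Theorem \ref{thm:h-realization}).

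To prove full faithfulness I verify that $\rho^*$ induces quasi-isomorphisms on derived $\Hom$ between the compact generators $\synspx K(X)(r)$, $X/K$ smooth. By the adjunction between $R_{\synt}$ and $\mathcal O_{\synt}$, together with duality of smooth motives and K\"unneth on both sides (Lemma \ref{lm:kunneth_abs_synt} on the syntomic side and monoidality of $\R\Gamma_{\pst}$ on the Galois side), the computation reduces in both categories to the syntomic cohomology of a product, and agreement is then a direct consequence of (\ref{def1bis}). Since the generators $\R\Gamma_{\pst}(X_{\ovk}, r)$ are bounded complexes of potentially semistable representations by Lemma \ref{pst=et} and the comparison theorems, and the image of a compact object is compact, $\rho^*$ restricts to a fully faithful functor $\smod_{c,K} \to \mathrm D^b(\Rep_{\pst}(G_K))$.

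The main obstacle will be lifting the quasi-isomorphism $\synspx K \simeq \R\omega_* \omega^*(\Qp)$ to a comparison of strict commutative dg-algebras, so that modules transport functorially. Over a $\Qp$-linear base the rectification from $E_\infty$- to strict commutative dg-algebras is standard, but one must verify that the quasi-isomorphisms in Theorem \ref{thm1}, Lemma \ref{pst=et}, and Theorem \ref{compsynpH} can be made coherent as morphisms of commutative ring spectra in $\underline{\mathcal DM}_h(K,\Qp)$, compatibly with $\AA^1$-homotopy and $\PP^1$-stability. This follows by choosing cofibrant replacements and standard transfer arguments, in the spirit of the appendix to \cite{NN}.
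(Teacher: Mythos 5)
Your overall strategy coincides with the paper's: both rest on Drew's formalism, on formula \eqref{def1bis} identifying $\synspx K$ with $\R\omega_*$ of the potentially semistable realization, and on a Künneth/generator computation for full faithfulness. The difference is in how $\rho^*$ is actually constructed, and that is where your argument has a genuine gap. You assert that $\R\Gamma_{\pst}$ "extends by Ind-completion to a colimit-preserving monoidal functor $\underline{\mathcal DM}_h(K,\Qp)\to\mathcal D(\Rep^\infty_{\pst}(G_K))$" and that "by the universal property of $\usMod_K$" this yields $\rho^*$. Neither step is formal. The realization $\R\Gamma_{\pst}$ of Paragraph \ref{num:real_Voevodsky} is built via Nori diagrams and cellular stratifications as a triangulated functor on $DM_{gm}(K,\Qp)$; a triangulated functor does not automatically lift to an exact functor of stable $\infty$-categories, so its monoidal Ind-extension to the presentable category is not available without further work. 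More seriously, the universal property of a module category produces a functor $\usMod_K\to\mathcal D(\Rep^\infty_{\pst}(G_K))$ out of a monoidal colimit-preserving $F$ only once one has a commutative algebra map $F(\synspx K)\to\Qp$ (so as to base-change $F(M)$ along it); your phrase "$\R\Gamma_{\pst}$ is naturally $\synspx K$-linear" presupposes exactly this datum, and \eqref{def1bis} does not supply it — it only identifies $\R\omega_*$ applied to the enriched theory with $\synspx K$.

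The paper fills this gap with Drew's machinery: one first verifies that $X\mapsto\R\Gamma_{\pst}(X_{\ovk},0)$ is a mixed Weil theory enriched in $\TT_0=\Rep_{\pst}(G_K)$ — which requires all of axioms (W1)--(W5) (h-descent, homotopy invariance, the value on $\Spec(K)$, the projective bundle formula, and Künneth), not just the Künneth formula you invoke — so that it is representable by a commutative monoid $\tilde{\mathcal E}_{\synt}$ in the $\TT$-enriched motivic homotopy category. Drew's Theorem 2.2.7 then gives the tilting equivalence $\smodx{\tilde{\mathcal E}_{\synt}}_K\simeq\mathrm D(\TT)$, and $\rho^*$ is the composite of this equivalence with the base-change $\derL\tilde\omega^*$ along $\synspx K=\R\omega_*(\tilde{\mathcal E}_{\synt})\to\tilde{\mathcal E}_{\synt}$; full faithfulness on constructible objects is Drew's Proposition 2.2.21 applied to $\derL\tilde\omega^*$, which is again a non-formal statement about the descent adjunction and not merely the generator computation you sketch (that computation is the right verification, but it lives inside the proof of 2.2.21 rather than replacing the construction of $\rho^*$). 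Your closing paragraph correctly flags the rectification of $E_\infty$-structures, which the paper handles by cofibrant replacement of dg-algebras following \cite[7.1.11(d)]{CD3}; but the missing enriched intermediate category, the unverified mixed Weil axioms, and the unconstructed algebra map are the substantive omissions.
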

\begin{proof}
We will apply Theorem 2.2.7 and Proposition 2.2.21 of \cite{Drew}.
To be consistent with the notations of \emph{loc. cit.}, we take $B=\Spec(K)$
 and put $\TT_0=\Rep_{\pst}(G_K)$,  $\TT=\Rep_{\pst}^\infty(G_K)$.

Consider the functor
 $\tilde{\mathcal E}_{syn}:X \mapsto \R\Gamma_{\pst}(X_{\ovk},0)$ (recall that $\R\Gamma_{\pst}(X_{\ovk},0)\simeq \R\Gamma_{\eet}(X_{
\ovk},\Q_p(0))$ as Galois representations).
 This is a presheaf of dg-$\Qp$-algebras on $K$-varieties
 with values in $\TT_0$. 
Then $\tilde{\mathcal E}_{syn}$ satisfies the axioms of a mixed Weil
 $\TT_0$-theory
 in the sense of \cite[2.1.1]{Drew}:
 the axiom (W1) comes from the fact $\tilde{\mathcal E}_{syn}$ satisfies
 $h$-descent which is stronger than Nisnevich descent,
 (W2), (W3) comes from homotopy invariance of geometric $p$-adic Hodge cohomology
 and the computation of the syntomic cohomology of $K$,
 (W4) comes from the projective bundle formula for geometric $p$-adic Hodge cohomology,
 and (W5) was proved in Lemma \ref{lm:kunneth_abs_synt}.
 Then we can apply 2.2.7 and 2.2.21 of \emph{loc. cit.} to
 $\tilde{\mathcal E}_{syn}$
 and this gives the theorem.

Let us explain this in more detail.
 First, Drew generalizes Theorem \ref{thm:exists_DM},
 to the category  $SH_{\Rep_{\pst}(G_K)}(S)$ of Nisnevich sheaves with values
 in the category of ind-representations $\TT$, seen as an enriched category
 over $\TT$ -- morphisms are not simply sets but ind-representations.
 This defines the $\Rep_{\pst}(G_K)$-enriched stable homotopy category
 over any base scheme $S$.
 Drew proves that this category is a stable monoidal $\infty$-category
 -- actually it is defined by a monoidal model category -- 
 that we will denote here by $\mathcal D_{\AA^1}(K,\TT)$.
 We will  denote by $\mathcal D_{\AA^1}(K,\Qp)$
  the usual monoidal $\infty$-category of $\AA^1$-homology,
         obtained by replacing $\TT$ with the category
         of $\Qp$-vector spaces--
 and the associated homotopy category still satisfies
 the 6 functors formalism (cf. \emph{loc. cit.},
 Prop. 1.6.7).\footnote{Essentially, its object are graded
 presheaves on the category of smooth $S$-scheme with values
 in $\TT$ satisfying homotopy invariance, Tate twist,
 as in Remark \ref{rem:DM=graded_presheaves}, but
 we have to add the Nisnevich descent property.}

Then applying Theorem  2.1.4 of \emph{loc. cit.} to the presheaf
 $\tilde{\mathcal E}_{syn}$ we get
 that the geometric $p$-adic Hodge cohomology is representable
 in $SH_{\Rep_{\pst}(G_K)}(S)$ by a commutative monoid 
 $\tilde{\mathcal E}_{syn}$
 in the underlying model category
 -- in our case the corresponding object is simply the collections
  of presheaves $X \mapsto \R\Gamma_{\pst}(X_{\ovk},r)$, as a $\N$-graded 
        dg-algebra indexed by $r$, seen as presheaves on $\Sm_K$ (the category of smooth $K$-varieties) with values
        in $\TT$.

Then Drew shows that one can define a monoidal $\infty$-category
 of modules over the dg-algebra $\tilde{\mathcal E}_{syn}$ which
 is enriched over $\TT$, that we will denote here
 by $\smodx{\tilde{\mathcal E}_{syn}}_K$.
 It follows that we have the following interpretation of the K\"{u}nneth formula: by  Theorem 2.2.7 of \emph{loc. cit.}  the
 functor 
$$
\tilde \rho:\quad 
 \smodx{\tilde{\mathcal E}_{syn}}_K \xrightarrow \sim D(\TT),\quad 
 M \mapsto
 \derR Hom_{\tilde{\mathcal E}_{syn}}^\TT(\tilde{\mathcal E}_{syn},M),
$$
where $Hom_{?}^\TT$ indicates the enriched Hom
  (with values in complexes of $\TT$), is an equivalence of monoidal triangulated categories.
 Recall that any smooth $K$-variety $X$ defines a canonical
 $\tilde{\mathcal E}_{syn}$-module $\tilde{\mathcal E}_{syn}(X)$.
 It follows from the construction that,
 for any smooth $K$-variety $X$ and any integer $r \in \Z$,
 there exists a canonical quasi-isomorphism:
\begin{equation} \label{eq:pf_compute_synt_modl}
\derR Hom_{\tilde{\mathcal E}_{syn}}^\TT(\tilde{\mathcal E}_{syn}(X),\tilde{\mathcal E}_{syn}(r)) \simeq \R\Gamma_{\pst}(X_{\ovk},r)
\end{equation}
functorial in $X$.

  Now we descend. According to \emph{loc. cit.}, 1.6.8, the pair of adjoint functors
 $(\omega^*,\omega_*)$  induces an
 adjunction of stable $\infty$-categories:
$$
\derL \omega^*:\mathcal D_{\AA^1}(K,\Qp)
 \leftrightarrows \mathcal D_{\AA^1}(K,\TT):\derR \omega_*
$$
such that $\derL \omega^*$ is monoidal. Then Drew defines
 (\emph{loc. cit.}, 2.2.13)
 the absolute cohomology associated with the enriched
 mixed Weil cohomology $\tilde{\mathcal E}_{\synt}$
 as
 $\derR \omega_*(\tilde{\mathcal E}_{\synt}),$
 seen as a monoid
 in $\mathcal D_{\AA^1}(K,\Qp)$ -- recall $\R\omega_*$ is
 weakly monoidal. According to this definition,
 Formula \eqref{def1bis}, and the definition recalled in Example
 \ref{ex:synt_sp}, we get:
$$
\mathcal E_{\synt} \simeq \R\omega_*(\tilde{\mathcal E}_{syn}),
$$
the absolute cohomology associated with
 $\tilde{\mathcal E}_{syn}$.
 According to this definition, we deduce from the adjunction
 $(\derL \omega^*,\derR \omega_*)$ an adjunction of
 stable $\infty$-categories:
$$
\derL\tilde \omega^*:\smod_K \leftrightarrows \smodx{\tilde{\mathcal E}_{syn}}_K:
 \R \tilde \omega_*
$$
whose left adjoint, $\derL \tilde \omega^*$, is monoidal.
Therefore, one gets the first two statements of the Theorem
 by putting:
$$
\rho^*=\tilde \rho \circ \derL\tilde \omega^*,\quad 
 \rho_*=\tilde \omega_* \circ \derR\tilde \rho^{-1}.
$$

Moreover, Prop. 2.2.21 of \emph{loc. cit.} tells us that
 $\derL\tilde \omega^*$ is an equivalence of categories
 if one restricts to constructible objects on both sides
 (\emph{i.e.}, generated by, respectively, the objects of
 the form
 $\mathcal E_{syn}(X)(r)$ and $\tilde{\mathcal E}_{syn}(X)(r)$
 for a smooth $K$-scheme $X$ and an integer $r \in \Z$).
  The fact that
 $\rho^*$ is fully faithful is a formal consequence of this result
 together with the fact that $\smod_K$ is compactly generated
 (cf. Rem. \ref{rem:smod&compact}).

Recall that, for any smooth $K$-variety $X$ with structural morphism
 $f:X \rightarrow \Spec(K)$, one gets:
$$
\tilde{\mathcal E}_{syn}(X)
 =\derL\tilde \omega^*({\mathcal E}_{syn}(X))
 =\derL\tilde \omega^*(f_!f^!{\mathcal E}_{syn,K})
 =\derL\tilde \omega^* D_K(f_*f^*{\mathcal E}_{syn,K})
 =\derL\tilde \omega^* D_K(f_*{\mathcal E}_{syn,X}),
$$
where $D_K$ is the Grothendieck-Verdier duality operator
 on constructible syntomic modules over $K$ defined
 in Paragraph \ref{num:duality}.
 Thus, in the case when $X$ is a smooth $K$-variety,
 Formula \eqref{eq:compute_synt_modl} follows from
 this identification, the definition of $\rho^*$,
 and \eqref{eq:pf_compute_synt_modl}.
One removes the assumption that $X$ is smooth 
 using the fact that the quasi-isomorphism
 \eqref{eq:compute_synt_modl} can  be extended
 to diagrams of smooth $K$-varieties and that
 both the left and the right  hand side satisfies (by definition) 
 cohomological descent for the h-topology.

\end{proof}

\begin{remark}\label{rem:concrete_syntomic_modl/K}
As a consequence,
 the category of constructible syntomic modules over $K$
 can be identified with a full triangulated subcategory $\D$ of
 the derived category $D^b\big(\Rep_{\pst}(G_K)\big)$.

It is easy to describe this subcategory:
 using resolution of singularities, all objects of
 $\smod_{c,K}$ are obtained by taking
 iterated extensions\footnote{Recall: in a triangulated category $\T$,
 an object $M$ is an extension of $M''$
 by $M'$ if there exists a distinguished triangle
 $M' \rightarrow M \rightarrow M'' \rightarrow M'[1]$ in $\T$.}
 or retracts of syntomic modules of
 the form $f_*(\se_{\synt,X})(r)$ for 
 a smooth projective morphism $f:X \rightarrow \Spec(K)$
 and an integer $r \in \Z$
 (this is an easy case of the general result \cite[4.4.3]{CD3}).
 So $\D$ is the full subcategory of $D^b\big(\Rep_{\pst}(G_K)\big)$
 whose objects are obtained by taking retract of iterated extensions
 of complexes of the form $\R\Gamma_{\pst}(X_{\ovk},r)$ 
 for $X/K$ smooth projective and $r \in \Z$.

Similarly, the (essential image of the) category
 of (not necessarily constructible)
 syntomic modules over $K$ can be identified with
 the smallest full triangulated subcategory of
 $D\big(\Rep^\infty_{\pst}(G_K)\big)$
 stable under taking (infinite) direct sums and which contains complexes
 of the form $\R\Gamma_{\pst}(X_{\ovk},r)$
 with the same assumptions as above. 
\end{remark}

 Composing the syntomic realization functor over $K$
 with the fully faithful functor $\rho^*$ above, we get:
\begin{corollary}\label{cor:syntomic_real&modules}
The syntomic (pre-)realization functor of Theorem \ref{thm:h-realization}
 in the case $S=\Spec(K)$ defines a triangulated monoidal
 realization functor:
$$
R_{\synt}:\quad DM_{gm}(K,\Qp)
 \simeq DM_{h,c}(K,\Qp) \xrightarrow{R'_{\synt}}
 \smod_{c,K} \xrightarrow{\rho^*} \mathrm D^b\big(\Rep_{\pst}(G_K)\big).
$$
It coincides with the functor $\R\Gamma_{\pst}$
 defined in Paragraph \ref{num:real_Voevodsky}.
\end{corollary}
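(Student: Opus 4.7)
The plan is that this is largely a repackaging, since the first assertion of the corollary is automatic and the substance lies in the identification with $\R\Gamma_{\pst}$. The composition $\rho^* \circ R'_{\synt}$ is a triangulated monoidal functor because $R'_{\synt}$ is monoidal by Theorem \ref{thm:h-realization} and $\rho^*$ is monoidal by Theorem \ref{thm:compute_synt_modl}; so the first sentence requires no argument. For the second sentence, my strategy is to invoke the universal property of $\DMgm(K,\Qp)$ encoded in Example \ref{ex:basic_realization} and Proposition \ref{prop:basic_realization}: a triangulated monoidal functor out of $\DMgm(K,\Qp)$ into a stable target is determined, up to natural equivalence, by its values on cohomological motives $M(X)^\vee = f_*(\un_X)$ for smooth $K$-varieties $X$ together with the functoriality and K\"unneth structural data.

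First I would compute both functors on such generators. Since $R'_{\synt}$ commutes with the six operations (Theorem \ref{thm:h-realization}), for a smooth $K$-variety $f:X \to \Spec(K)$ and $r\in\Z$ one has $R'_{\synt}(f_*(\un_X)(r)) = f_*(\synspx X)(r)$ in $\smod_{c,K}$. Applying the fully faithful monoidal embedding $\rho^*$ and using the canonical quasi-isomorphism \eqref{eq:compute_synt_modl} yields
$$
\rho^*\bigl(f_*(\synspx X)(r)\bigr) \simeq \R\Gamma_{\pst}(X_{\ovk},r),
$$
which is exactly $\R\Gamma_{\pst}(f_*(\un_X)(r))$ by the construction of Paragraph \ref{num:real_Voevodsky}. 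Singular varieties are handled as in \emph{loc.\ cit.}\ by $h$-descent and de Jong's alterations, both of which are satisfied by $R_{\synt}$ (via Remark \ref{rem:motives_singular} and the $h$-descent built into $\smod$) and by $\R\Gamma_{\pst}$ (through its construction from Nori's motives).

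Next I would upgrade this agreement on objects to a natural isomorphism of monoidal triangulated functors. The identification \eqref{eq:compute_synt_modl} is functorial in $X$ by construction of Theorem \ref{thm:compute_synt_modl}: the enriched equivalence $\tilde\rho$ of Drew's theorem is canonical, hence compatible with morphisms of smooth $K$-varieties and with tensor products. This gives the compatibility with pullbacks, cup-products, and boundary maps attached to good pairs, which is precisely the functorial data required by Proposition \ref{prop:basic_realization} (see also Remark \ref{rem:realizations_agree}). By the universal property of the representation $(X,Y,i) \mapsto H^i_{\pst}(X_{\ovk},Y_{\ovk},\Qp(r))$, the resulting natural transformation $R_{\synt} \Rightarrow \R\Gamma_{\pst}$ is uniquely determined on cohomological motives, and is a quasi-isomorphism there; both sides extend to all of $\DMgm(K,\Qp)$ as triangulated monoidal functors commuting with the relevant operations, so the isomorphism propagates to the entire category.

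The main obstacle is really bookkeeping: verifying that the quasi-isomorphism \eqref{eq:compute_synt_modl} supplied by Drew's formalism and the isomorphism coming from Paragraph \ref{num:real_Voevodsky} (built from Nori's motives and \ref{prop:basic_realization}) agree as natural transformations, not merely pointwise. This comes down to the observation that both constructions factor through the presheaf of dg-$\Qp$-algebras $X \mapsto \R\Gamma_{\pst}(X_{\ovk},r)$, so the canonical identifications involved are induced by the same underlying datum. Once this matching of the underlying data is spelled out, the coincidence $R_{\synt} \simeq \R\Gamma_{\pst}$ follows formally.
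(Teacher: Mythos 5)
Your proposal is correct and follows essentially the same route as the paper: the paper's own proof likewise dismisses the first assertion as formal and reduces the second to the observation that both $\rho^*\circ R'_{\synt}$ (via the identification \eqref{eq:pf_compute_synt_modl} from the proof of Theorem \ref{thm:compute_synt_modl}) and $\R\Gamma_{\pst}$ (via Example \ref{ex:basic_realization}) are induced by the same presheaf $X\mapsto\R\Gamma_{\pst}(X_{\ovk},r)$ of dg-$\Qp$-algebras on smooth $K$-varieties. Your version merely spells out the naturality bookkeeping that the paper leaves implicit.
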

\begin{proof}
Only the last statement requires a proof. By definition,
 $\R\Gamma_{\pst}$ is the functor defined on $\DMgm(K,\Qp)$
 applying Example \ref{ex:basic_realization}
 to the functor which to a smooth affine $K$-variety $X$
 associates the complex $\R\Gamma_{\pst}(X_{\ovk},r)$.
 Thus the statement follows from the description of the functor
 $\rho^*$ in the above proof and the
 identification \eqref{eq:pf_compute_synt_modl}. 
\end{proof}

\begin{remark} \label{rem:Rsynt_extends}
The corollary means in particular that the realization
 $R'_{\synt}$ of Theorem \ref{thm:h-realization} does indeed 
 extends the realization $\R\Gamma_{\pst}$ to arbitrary $K$-bases
 in a way compatible with the 6 operations.
\end{remark}
\begin{corollary}
For a variety $f: X\to \Spec(K)$, we have a natural quasi-isomorphism
$$
\R\Gamma_{\sh}(X,r)=\R\Hom_{\smod_X}(\se_{\synt,X},\se_{\synt,X}(r)).
$$
\end{corollary}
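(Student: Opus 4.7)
The plan is to reduce the identity to the framework of Theorem~\ref{thm:compute_synt_modl} and then recognize the definition of $\R\Gamma_{\sh}(X,r)$ on the other side. The argument is a short chain of adjunctions and a full-faithfulness input, with essentially no calculation once the earlier machinery is in place.

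First I would invoke the construction $\se_{\synt,X}=\derL f^*\se_{\synt,K}$ (from the paragraph defining syntomic modules) together with the six functors formalism on $\smod_\bullet$ provided by Theorem~\ref{thm:h-realization}. The $(f^*,f_*)$-adjunction then yields
\[
\R\Hom_{\smod_X}(\se_{\synt,X},\se_{\synt,X}(r))
 \simeq \R\Hom_{\smod_K}\!\bigl(\se_{\synt,K},\,f_*\se_{\synt,X}(r)\bigr).
\]

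Next, I would push this computation along the fully faithful triangulated functor $\rho^*\colon \smod_K\to D\bigl(\Rep^\infty_{\pst}(G_K)\bigr)$ of Theorem~\ref{thm:compute_synt_modl}. Since $\rho^*$ is monoidal, it sends the unit $\se_{\synt,K}$ to the unit $\Qp$ of the target category; and by the quasi-isomorphism~\eqref{eq:compute_synt_modl} the object $\rho^*\bigl(f_*\se_{\synt,X}(r)\bigr)$ is identified with $\R\Gamma_{\pst}(X_{\ovk},r)$. Fully faithfulness of $\rho^*$ therefore turns the right hand side above into $\R\Hom_{D(\Rep^\infty_{\pst}(G_K))}\bigl(\Qp,\R\Gamma_{\pst}(X_{\ovk},r)\bigr)$. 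Because $\Rep_{\pst}(G_K)$ is a full thick abelian subcategory of $\Rep^\infty_{\pst}(G_K)$ and the complex $\R\Gamma_{\pst}(X_{\ovk},r)$ has cohomology in $\Rep_{\pst}(G_K)$, this derived Hom coincides with $\R\Hom_{D^b(\Rep_{\pst}(G_K))}\bigl(\Qp,\R\Gamma_{\pst}(X_{\ovk},r)\bigr)$. Invoking the main construction in \S\ref{num:main_construction} (equivalently Theorem~\ref{thm1} together with the period functor $V_{\pst}$) this is precisely $\R\Gamma_{\sh}(X,r)$.

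The main obstacle in writing this out rigorously is not a computation but checking that the six-functor $(f^*,f_*)$-adjunction produced abstractly in Theorem~\ref{thm:h-realization} corresponds, under $\rho^*$, to the expected direct image of $p$-adic Galois representations; in particular, that the identification $\rho^*\bigl(f_*\se_{\synt,X}(r)\bigr)\simeq\R\Gamma_{\pst}(X_{\ovk},r)$ of \eqref{eq:compute_synt_modl} is sufficiently functorial to commute with $\R\Hom_{\smod_K}(\se_{\synt,K},-)$. Both points are built into the statement and proof of Theorem~\ref{thm:compute_synt_modl}, so the corollary follows as a formal consequence of that theorem and the definition of $p$-adic absolute Hodge cohomology.
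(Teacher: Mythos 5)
Your proposal is correct and follows essentially the same route as the paper: the $(f^*,f_*)$-adjunction reduces the right-hand side to $\R\Hom_{\smod_K}(\se_{\synt,K},f_*\se_{\synt,X}(r))$, and then full faithfulness of $\rho^*$ together with the identification $\rho^*(f_*\se_{\synt,X}(r))\simeq\R\Gamma_{\pst}(X_{\ovk},r)$ from Theorem \ref{thm:compute_synt_modl} gives the claim. The extra care you take with the passage from $\Rep^\infty_{\pst}(G_K)$ to $\Rep_{\pst}(G_K)$ is left implicit in the paper but is a legitimate (minor) point.
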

\begin{proof}Since, by the above theorem,  $\rho^*(f_*\se_{\synt,X}(r))\simeq \R\Gamma_{\pst}(X_{\ovk},r)$, 
we have
\begin{align*}
\R\Hom_{\smod_X} & (\se_{\synt,X},\se_{\synt,X}(r)) =\R\Hom_{\smod_X}(f^*\se_{\synt,K},\se_{\synt,X}(r))\\
 & =
\R\Hom_{\smod_K}(\se_{\synt,K},f_*\se_{\synt,X}(r))=\R\Hom_{D(\Rep_{\pst}(G_K)}(\Qp,\R\Gamma_{\pst}(X_{\ovk},r))\\
 & \simeq R\Gamma_{\sh}(X,r),
\end{align*}
as wanted.
\end{proof}
This means that we can define syntomic cohomology of a syntomic module in the following way.
\begin{definition}
Let $ X$ be a variety over $K$ and $\sm\in\smod_X$. {\em Syntomic cohomology of $\sm$} is the complex
$$
\R\Gamma_{\sh}(X,\sm)=\R\Gamma_{\synt}(X,\sm):=\R\Hom_{\smod_X}(\se_{\synt,X},\sm).
$$
\end{definition}
This definition is compatible with the definition of syntomic cohomology of Voyevodsky's motives from Example \ref{ex:syntomic_ssp_DM}. That is, for $M\in DM_{\gm}(K,\Qp)$, we have a canonical quasi-isomorphism $$\R\Gamma_{\synt}(\Spec(K),R^{\prime}_{\synt}(M))\simeq \R\Gamma_{\synt}(M). $$ This follows easily from Theorem \ref{thm:compute_synt_modl} and Corollary \ref{cor:syntomic_real&modules}.
\begin{remark}
Syntomic cohomology with coefficients was studied before in \cite{N1}, \cite{N2}, \cite{Ts0}, \cite{Ban}. The coefficients used there could be called "syntomic local systems". They are variants of the crystalline and semistable local systems introduced by Faltings \cite{Fa0}, \cite{Fa1}. There exists also a notion of "de Rham local systems". Those were introduced by Tsuzuki in his (unpublished) thesis \cite{Ts} and later by Scholze \cite{Sch} in the rigid analytic setting.

In all these cases, syntomic local systems have a de Rham avatar and an \'etale one. These two avatars  are related by relative Fontaine theory and their cohomologies (de Rham, \'etale, and syntomic) satisfy  $p$-adic comparison isomorphisms. We hope that this is also the  case for the syntomic coefficients introduced here and we will discuss it in a forthcoming paper.
\end{remark}

\subsection{Geometric and constructible representations}

\begin{definition}\label{df:motivic_rep}
Keep the notations of the previous section.
We define the category  $\Rep_{gm}(G_K)$ (resp. $\Rep_{Ngm}(G_K)$, resp. $\Rep_{c}(G_K)$) of \emph{geometric} (resp. \emph{Nori's geometric}, resp. \emph{constructible})
 {$p$-adic representations of $G_K$}
 as the essential image of the following (composite) functor:
\begin{align*}
& DM_{gm}(K,\Q_p) \xrightarrow{R_{\synt}} \mathrm D^b\big(\Rep_{\pst}(G_K)\big)
 \xrightarrow{H^0} \Rep_{\pst}(G_K), \\
  resp. \  & \R_{\pst}:\quad \MM(K)_{\Qp}\to\Rep_{\pst}(G_K),\\ 
resp. \  & \smod_{c,K}
 \xrightarrow{\rho^*} \mathrm D^b\big(\Rep_{\pst}(G_K)\big)
 \xrightarrow{H^0} \Rep_{\pst}(G_K).
\end{align*}
\end{definition}
Thus a geometric $G_K$-representation can be described as the 
 geometric \'etale $p$-adic cohomology of a Voevodsky's  motive over $K$ with its natural Galois action and Nori's geometric $G_K$-representation - as the geometric \'etale $p$-adic cohomology of a Nori's motive.
By Corollary \ref{cor:syntomic_real&modules}, a geometric 
 $G_K$-representation is  constructible and by 
 the compatibility of realizations of Nori's and Voevodsky's motives (\ref{eq:real_Voevodsy&Nori}) geometric representation is Nori's geometric.
  So we have the following inclusions of categories
 \begin{equation}
 \label{inclusions}
  \Rep_{gm}(G_K)\subset \Rep_{Ngm}(G_K)\subset \Rep_{c}(G_K)\subset \Rep_{\pst}(G_K).
\end{equation}
We do not know much about these subcategories.
 For example we even do not have a conjectural description of them
 in purely algebraic terms (for example in terms of $(\phi,N,G_K)$-modules)
 -- this contrasts very much with the case of number fields, see
  \cite{FonMaz}.

Here are  few trivial facts:
\begin{itemize}
\item All three subcategories are stable under taking tensor products
 and twists.
\item All three categories contain representations
 of the form $H^i_{\eet}(X_{\ovk},\Q_p(r))$
 for any integers $i,r \in \N \times \Z$ and any $K$-variety $X$
 (possibly singular). They also contain kernel of projectors
 of these particular representations when the projector is induced by
 an algebraic correspondence modulo rational equivalence for $X/K$
 projective smooth, and any finite correspondence for an arbitrary
 $X/K$.
\end{itemize}
 We do not know  if any of these subcategories are stable under taking sub-objects,
  quotients, or even direct factors.

   The following fact is the only nontrivial result about stability.
\begin{proposition}
\label{only}
The category $\Rep_c(G_K)$ contains all potentially
 semistable extensions of representations of the form
 $H^i_{\eet}(X_{\ovk},\Q_p(r))$ for $X/K$ smooth and projective,
 $i \in \N$, $r \in \Z$.
\end{proposition}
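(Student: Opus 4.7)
The plan is to combine two ingredients: the Decomposition Theorem for smooth projective varieties (stated in the Remark after Corollary~2.3.X) together with the fully faithfulness of the triangulated realization $\rho^{*}:\smod_{c,K}\to D^{b}(\Rep_{\pst}(G_{K}))$ from Theorem \ref{thm:compute_synt_modl}. The first shows that each individual representation $H^{i}_{\eet}(X_{\ovk},\Q_{p}(r))$, for $X/K$ smooth projective, lies in $\Rep_{c}(G_{K})$; the second then allows one to recover arbitrary extensions of such representations by lifting the extension class through $\rho^{*}$ and forming a triangle on the source side.

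For the first step, write $f:X\to\Spec(K)$ for the structural morphism. By \eqref{eq:compute_synt_modl} one has
$$
\rho^{*}\bigl(f_{*}\synspx X(r)\bigr)\simeq \R\Gamma_{\pst}(X_{\ovk},r),
$$
and the Decomposition Theorem identifies the right hand side with $\bigoplus_{i}H^{i}_{\eet}(X_{\ovk},\Q_{p}(r))[-i]$ in $D^{b}(\Rep_{\pst}(G_{K}))$. Since $\rho^{*}$ is fully faithful and the triangulated category $\smod_{c,K}$ is pseudo-abelian (being the homotopy category of the stable $\infty$-category of compact objects in $\usMod_{K}$), the projector onto the $i$-th summand lifts uniquely to $f_{*}\synspx X(r)$ and splits, producing $M_{i}\in\smod_{c,K}$ with $\rho^{*}(M_{i})\simeq H^{i}_{\eet}(X_{\ovk},\Q_{p}(r))[-i]$. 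Shifting by $[i]$ gives that $H^{i}_{\eet}(X_{\ovk},\Q_{p}(r))$, viewed as a complex concentrated in degree $0$, is in the essential image of $\rho^{*}$, hence lies in $\Rep_{c}(G_{K})$.

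For the second step, suppose $V$ is a potentially semistable extension $0\to W'\to V\to W''\to 0$ with $W'=H^{i'}_{\eet}(X'_{\ovk},\Q_{p}(r'))$ and $W''=H^{i''}_{\eet}(X''_{\ovk},\Q_{p}(r''))$ for smooth projective $X',X''/K$. This corresponds to a distinguished triangle $W'\to V\to W''\xrightarrow{\delta}W'[1]$ in $D^{b}(\Rep_{\pst}(G_{K}))$. By the first step, choose lifts $W'\simeq \rho^{*}(M')$, $W''\simeq \rho^{*}(M'')$ with $M',M''\in\smod_{c,K}$. Full faithfulness of $\rho^{*}$ lifts $\delta$ uniquely to $\tilde\delta:M''\to M'[1]$, and completing to a distinguished triangle $M'\to M\to M''\xrightarrow{\tilde\delta}M'[1]$ in $\smod_{c,K}$ and then applying $\rho^{*}$ yields $\rho^{*}(M)\simeq V$ by uniqueness (up to non-canonical isomorphism) of the third vertex. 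Since $V$ is concentrated in degree $0$, taking $H^{0}$ shows $V\in\Rep_{c}(G_{K})$. Iterating this argument handles arbitrary iterated potentially semistable extensions.

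The principal subtlety is step one: one must be certain that the direct-sum decomposition of $\R\Gamma_{\pst}(X_{\ovk},r)$ given by the Decomposition Theorem actually lifts to a decomposition inside $\smod_{c,K}$, rather than only in the target of $\rho^{*}$. This is where the pseudo-abelian structure of $\smod_{c,K}$ (equivalently, the fact that constructible objects are exactly the compact objects in a compactly generated triangulated category, cf.\ Remark \ref{rem:smod&compact}) is essential. Once this is granted, the rest of the argument is a formal consequence of the fully faithful triangulated functor $\rho^{*}$ together with the characterization of extension classes as morphisms to a shift in the derived category.
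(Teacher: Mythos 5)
Your proof is correct and follows essentially the same route as the paper's: both use the identification $\rho^{*}(f_{*}\synspx X(r))\simeq\R\Gamma_{\pst}(X_{\ovk},r)$, Deligne's decomposition theorem to exhibit each $H^{i}_{\eet}(X_{\ovk},\Q_p(r))[-i]$ as a retract, and then closure of the essential image of $\rho^{*}$ under extensions. The only difference is presentational: the paper simply asserts (via Remark \ref{rem:concrete_syntomic_modl/K}) that the essential image $\D$ is stable under retracts, suspensions, and extensions, whereas you spell out why — idempotent-completeness of $\smod_{c,K}$ to split the lifted projector, and full faithfulness to lift the connecting morphism — which is a useful elaboration but not a different argument.
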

\begin{proof}
Let  $\D$ be the essential image of the functor
 $\rho^*:\smod_{c,K} \rightarrow \mathrm D^b\big(\Rep_{\pst}(G_K)\big)$.
 Note that $\D$ is stable under taking retracts, suspensions, and extensions
 (see Remark \ref{rem:concrete_syntomic_modl/K}).
 We first prove that for any smooth projective morphism
 $f:X \rightarrow \Spec(K)$ and any integer $r \in \Z$,
 the representation $H^i_{\eet}(X_{\ovk},\Q_p(r))$ belongs to $\D$.

The complex or representations
 $\R\Gamma_{\pst}(X_{\ovk},r) \simeq \derR f_*(\Qp)(r)$
 belongs to $\D$ (according to the end of
 Theorem \ref{thm:compute_synt_modl}).
Moreover, using \cite[4.1.1]{De2} and \cite{De1},
 there exists an isomorphism in $D^b\big(\Rep_{\pst}(G_K)\big)$:
$$
\derR f_*(\Qp)(r) \simeq \bigoplus_{i \in \Z} \derR^i f_*(\Qp)(r)[-i].
$$
This means that $\derR^i f_*(\Qp)(r)$ is the kernel of a projector
 of $\derR f_*(\Qp)(r)$, thus belongs to $\D$ because the later is stable
 under taking retracts.

Thus the result follows, using the fact that $\D$ is stable
 under taking extensions in $D^b\big(\Rep_{\pst}(G_K)\big)$.
\end{proof}

\begin{remark}
The preceding proof shows that the essential image $\D$ of
 constructible syntomic modules in complexes of pst-representations
 contains arbitrary truncations of the complexes
 $\R\Gamma_{\pst}(X_{\ovk},r)$.
 A natural question would be to determine if, more generally, 
 $\D$ is stable under taking truncation. This would immediately
 imply that $\Rep_c(G_K)$ is a thick abelian subcategory
 of $\Rep(G_K)$ (\emph{i.e.}it is stable under taking sub-objects and quotients)
 and that $\D$ is the category of bounded complexes of
 pst-representations whose cohomology groups are constructible
 in the above sense.
\end{remark}

\begin{remark}
 In the sequence of inclusions 
 \begin{equation*}
  \Rep_{gm}(G_K)\subset \Rep_{Ngm}(G_K)\subset \Rep_{c}(G_K)\subset \Rep_{\pst}(G_K)
\end{equation*}
we believe that the first one is an equality and the following two are strict. We can support this belief with the following observations. 
 The first inclusion should be  an equality since the category of Nori's motives is expected to be the heart of a motivic $t$-structure on $DM_{gm}(K,\Q_p)$ (see \cite[p. 374]{BKa}). The second inclusion should be strict by the philosophy of weights: by Proposition \ref{only},  we allow all potentially semistable extensions as extensions of certain geometric representations in the constructible category but in the geometric category such extensions should satisfy a  weight filtration condition.  For properties of geometric representations coming from abelian varieties over $\Q_p$ see the work of Volkov \cite{MV0}, \cite{MV}.
 
 For the third inclusion, take $k=F_q$, the finite field with $q=p^s$ elements. Let $V\in\Rep_c(G_K)$ be a constructible representation. Then, by the Conjecture of purity of the weight filtration, the $\phi$-module  $D_{\pst}(V)$ is an extension of "pure" $\phi$-modules, i.e., $\phi$-modules such that, for a number $a\geq s$, 
 $\phi^a$  has eigenvalues that are $p^a$ - Weil numbers\footnote{A $p^a$ - Weil number is an algebraic integer such that all its conjugates have absolute value $\sqrt{p^a}$ in ${\mathbf C}$} (cf., \cite[Conjecture 2.6.5]{Ill}). But there are crystalline representations that do not have this property. For example,  any unramified character $\chi: G_{K_0}\to\Q_p$, $Fr\mapsto \mu\in\Q_p^*$, such that $\mu$ is not a $p^a$ -Weil number for any $a\geq 0$ (such a $\mu$ exists by a uncountability of $\Qp$).
\end{remark}

 \end{document}